\documentclass[dvipsnames]{amsart}
\usepackage{pict2e}
\usepackage{amsmath,amssymb,amsthm}
\usepackage{extarrows}
\usepackage{fullpage}
\usepackage{todonotes}

\allowdisplaybreaks
\usepackage{soul}

\usepackage{color}
\usepackage{hyperref}
\hypersetup{
	colorlinks=true,
	linkcolor=blue,
	citecolor=blue,
	urlcolor=blue
}

\usepackage{graphicx}
\usepackage{mathrsfs}
\usepackage{tikz-cd}
\usepackage{array}
\usepackage{multirow}
\usepackage{soul}
\usepackage{quiver}

\usepackage{xcolor}

\usepackage{tikz}

\newtheorem{theorem}{Theorem}[section]
\newtheorem{lemma}[theorem]{Lemma}	
\newtheorem{proposition}[theorem]{Proposition}
\newtheorem{corollary}[theorem]{Corollary}
\theoremstyle{definition}
\newtheorem{definition}[theorem]{Definition} 
\newtheorem{remark}[theorem]{Remark}	
\newtheorem{example}[theorem]{Example}

\newcommand{\longdownarrow}{\lower 1.4ex\hbox{\begin{picture}(18,18)(0,0)
\thicklines
\put(0,18){\vector(0,-1){18}}
\end{picture}}}
\newcommand{\longsearrow}{\lower 1.4ex\hbox{\begin{picture}(18,18)(0,0)
\thicklines
\put(0,18){\vector(1,-1){18}}
\end{picture}}}
\newcommand{\longssearrow}{\lower 1.4ex\hbox{\begin{picture}(18,18)(0,0)
\thicklines
\put(0,18){\vector(1,-2){18}}
\end{picture}}}
\newcommand{\longeearrow}{\lower 1.4ex\hbox{\begin{picture}(0,0)(9,9)
\thicklines
\put(0,18){\vector(1,0){18}}
\end{picture}}}
 \usepackage{relsize}
\theoremstyle{definition} 
\newtheorem*{ack}{Acknowledgements}

\numberwithin{equation}{section}
\newcommand{\C}{\mathcal{C}}
\newcommand{\R}{\mathbb{R}}
\newcommand{\N}{\mathbb{N}}
\newcommand{\Z}{\mathbb{Z}}
\title[Pansu pullback and spectral complexes]{Pansu pullback and spectral complexes} 
\keywords{Pansu pullback, Carnot groups,  Rumin complex, spectral sequences}
 
\subjclass{46L87, 53C17, 22E25, 30L99, 18G40
}
\author[F.~Lo Biundo]{Filippa Lo Biundo}
\author[F.~Tripaldi]{Francesca Tripaldi}

\address[F. Lo Biundo]%
{Department of Pure Mathematics, University of Leeds, Woodhouse, LS2 9JT Leeds, UK} 
\email{f.lobiundo@leeds.ac.uk}

\address[F. Tripaldi]%
{Department of Pure Mathematics, University of Leeds, Woodhouse, LS2 9JT Leeds, UK} 
\email{f.tripaldi@leeds.ac.uk}

\date{}

\begin{document}
\maketitle
\begin{abstract}
  In this paper, we prove the commutativity between the Pansu pullback of a smooth contact map between Carnot groups and the differentials appearing in the spectral complexes. As a direct application, we also present a way of ``lifting'' a Pansu derivative (viewed as a Lie algebra homomorphism) from  Carnot groups to their central extensions. 
\end{abstract}
\section{Introduction}
The naturality of the exterior derivative is a fundamental property of the de Rham complex: for every smooth map $\varphi:M_1\to M_2$ between smooth manifolds, $d\varphi^*\alpha=\varphi^*d\alpha$ for all $\alpha\in\Omega^\bullet(M_2)$.
Consequently, pullback defines a cochain map and induces a homomorphism on de Rham cohomology \cite{bott-tu}. In this paper, we establish an analogous naturality property for the Pansu pullback on Carnot groups, using the differentials of the spectral complexes introduced in \cite{tripaldi2026spectralcomplexestruncatedmulticomplexes}.

The stratification of a Carnot Lie algebra equips differential forms with a decomposition by weight and provides an intrinsic notion of differentiability adapted to the group dilations. The corresponding Pansu derivative is a homogeneous group homomorphism and, at the Lie algebra level, preserves the layers of the stratification \cite{Pansu-quasiconformal}. Replacing the classical differential by the Pansu derivative in the definition of pullback gives the \emph{Pansu pullback}, which preserves the weight of forms. This construction is particularly relevant to the analysis of mappings between Carnot groups: Lipschitz maps are Pansu differentiable almost everywhere, and Pansu differentiation and pullback play a central role in the study of Sobolev mappings and rigidity phenomena \cite{Pansu-quasiconformal,kleiner2020pansu,kleiner2021sobolev}. We refer to \cite{LeDonne+2017+116+137,ABB,donne2024metric} for background on Carnot groups and subRiemannian geometry.

Despite this compatibility with the grading, the Pansu pullback does not generally commute with the exterior derivative. A related question concerns the Rumin complex $(E_0^\bullet,d_c)$, a differential complex adapted to the subRiemannian structure and computing the same cohomology as the de Rham complex \cite{rumin1999differential,rumin_palermo,CompensatedCompactness,FT}. On Heisenberg groups, the naturality of the Rumin complex under smooth contact maps and its compatibility with Pansu pullback are well understood \cite{canarecci2019insightrumincohomologyorientability}. Kleiner, M\"uller, and Xie established weighted distributional pullback identities on general Carnot groups and distributional commutation with the Rumin differential on Heisenberg groups under suitable Sobolev assumptions \cite{kleiner2020pansu,kleiner2021sobolevmappingsrumincomplex}. Crucially, on general Carnot groups, Pansu pullback need not preserve the space of Rumin forms, but even projecting onto this space $E_0^\bullet$ does not in general restore commutation with $d_c$. Explicit examples are discussed in Section~\ref{chapter pansu pullback}.

The spectral complexes introduced in
\cite{tripaldi2026spectralcomplexestruncatedmulticomplexes}
provide another family of differential complexes adapted
to the graded structure. Starting from a truncated
multicomplex, their construction assembles differentials
from different pages of its spectral sequence into
complexes through suitable mixed quotients. For a Carnot
group $G$, the decomposition of differential forms by
degree and weight makes the de Rham complex into such
a multicomplex. Its weight filtration determines the
cycle spaces $Z_i^{p,k-p}$, the boundary spaces
$B_j^{p,k-p}$, and the spectral differentials $\Delta_i$,
while the weight components of the associated Rumin
differential determine the indices used in the assembly.
We retain the resulting spectral complexes in their
quotient form $E_{i,j}^{p,k-p}
    =
    Z_i^{p,k-p}/B_j^{p,k-p}$,
as recalled in
Section~\ref{chapter truncated multicomplex}.

Our main result establishes the naturality of these
spectral complexes under Pansu pullback. For every
Euclidean smooth Pansu differentiable map
$\varphi:G_1\to G_2$,
Proposition~\ref{prop:Pansu-preserves-Zr-Br} shows that
the Pansu pullback preserves the cycle and boundary
spaces, and therefore induces maps between the
corresponding mixed quotients.
Theorem~\ref{thm: commutativity} proves that these maps
commute with the spectral differentials:
\[
    \varphi_P^*\bigl(\Delta_i[\alpha]\bigr)
    =
    \Delta_i\bigl(\varphi_P^*[\alpha]\bigr).
\]
Here $[\alpha]\in
Z_i^{p,k-p}(G_2)/B_j^{p,k-p}(G_2)$, the indices are
chosen so that the corresponding arrows are defined
on both groups, and the equality holds in the target
quotient by $B_i^{p+i,k+1-p-i}(G_1)$.

The mixed quotients are essential to the scope of this
statement. On the ordinary $i$-th page, the cycle and
boundary indices coincide, considering only the quotients $Z_i/B_i$. In a
spectral complex these indices can differ, and
successive arrows can arise from different pages. Our theorem
establishes compatibility with this more refined
structure: the Pansu pullback acts on the mixed
quotients themselves and commutes with the
differentials assembled into the spectral complexes.

The proof relies on the relation between the classical
and Pansu pullbacks, together with the naturality of
the exterior derivative. By
Proposition~\ref{prop:weight-behaviour-pullbacks},
the classical pullback of such a map does not decrease
weight, and its component preserving weight is
precisely the Pansu pullback. Decomposing
$d\varphi^*=\varphi^*d$ according to weight gives
compatibility relations between the weight components
of the classical pullback and those of $d$; see
Lemma~\ref{lem:weight-components-classical-pullback}.
These relations allow us to transport the smooth
correction forms defining the cycle and boundary
spaces and to prove commutation with the spectral
differentials.

The same argument also establishes commutation on
the ordinary spectral sequences of the smooth
de Rham complexes. For Sobolev mappings,
Kleiner, M\"uller, and Xie established a
spectral-sequence pullback theorem with values in
the spectral sequence of a rough de Rham complex
\cite{kleiner2022sobolev}.
Our method gives a direct proof within the smooth
category and treats the mixed quotients and
differentials constituting the spectral complexes.

As an application, we investigate lifting problems for
1-dimensional central extensions. The approach developed
here stems from the second author's idea of combining central
extensions of both the source and the target with commutation
properties of the Pansu pullback. The central-extension
description of the first Heisenberg group gives an elementary
model: for every smooth map
$\varphi:\mathbb R^2\to\mathbb R^2$, the derivative
$D\varphi(x)$ admits the graded lift
$$\widehat{D\varphi}(x)
    =
    \begin{pmatrix}
        D\varphi(x) & 0\\
        0 & \det D\varphi(x)
    \end{pmatrix}
    \colon\mathfrak h^1\longrightarrow\mathfrak h^1\,.$$
    
Its action on the centre is the multiplication by
$\det D\varphi(x)$, and is therefore the identity precisely
when $\det D\varphi(x)=1$. If a smooth contact map
$\Phi(x,t)=(\varphi(x),F(x,t))$ covers $\varphi$, then
$D_P\Phi(x,t)=\widehat{D\varphi}(x)$. The existence of the
pointwise homomorphisms $\widehat{D\varphi}(x)$ alone,
however, does not ensure the existence of such a map.

For general Carnot groups, the algebraic lifting problem
is governed by the Lie algebra cohomology in degree $2$, which
classifies central extensions with the base and central
kernel fixed. Our spectral commutation formula provides
an explicit expression for the cohomological data involved.
Let $\omega\in\ker\Box_0\cap\Omega_L^2(G_2)$
be a left-invariant harmonic cocycle, where $\Box_0$ is the
algebraic Hodge Laplacian associated with $d_0$, and let
$\Pi_0$ denote the fibrewise orthogonal projection on $\ker\Box_0\cap\Omega^2(G_1)$.
If $\Pi_0(\varphi_P^*\omega)$ is left-invariant, then every
$D_P\varphi(x)$ lifts to a Lie algebra homomorphism between
the fixed central extensions defined by
$\Pi_0(\varphi_P^*\omega)$ and $\omega$, acting as the identity
on their central kernels. Proposition~\ref{prop: extensions}
expresses $\Pi_0(\varphi_P^*\omega)$ in terms of the weight
components of the Rumin differential applied to Pansu
pullbacks of suitable horizontal $1$-forms.

This viewpoint is also applied in
\cite{hakavuori2025smoothcontactliftscentral} in the setting
of smooth contact lifts. Their characterisation uses the
full Rumin differential of the pullback of a de Rham
potential for the target extension cocycle. They also
establish a cohomological condition involving Pansu
pullback and prove its sufficiency under additional
hypotheses. Under the homogeneity and weight assumptions
of Corollary~\ref{cor: extensions to stratifiable}, their
results yield a smooth contact lift between the stratified
central extensions constructed here. The pointwise
algebraic lifting construction can also be applied to cocycles
with several weight components, including examples whose
central extensions are not stratifiable.

The paper is organised as follows. Section~\ref{chapter truncated multicomplex} recalls the graded structure of differential forms on Carnot groups, the resulting de Rham multicomplex, and the associated Rumin and spectral complexes. Section~\ref{chapter pansu pullback} compares the classical and Pansu differentials and pullbacks, and presents examples illustrating the failure of commutation with the exterior derivative and Rumin differentials. Section~\ref{chapter commutativity} proves the main commutation theorem and discusses its formulation using harmonic representatives. Finally, Section~\ref{section application} develops the application to central extensions of nilpotent Lie groups.

\ack{The authors would like to thank B. Kleiner, S. M\"uller, and X. Xie for their insightful feedback on an earlier draft, which helped shape the current version of this paper. They are also grateful to V. Magnani for taking the time to explain his work on Pansu differentiability, which forms the basis of Section 3, as well as his feedback on the paper. We thank A.~Baldi and A.~Rosa for drawing our attention to
potential closedness issues in the orthogonal realisation
of the spectral complexes, discussed in
Subsection~\ref{subsect: Orthogonal realisations of the spectral complexes}.
Their observations led us to retain the quotient formulation
of the spectral complexes and to develop an alternative
proof of the main commutativity theorem, based entirely on
the relation between the classical and Pansu differentials
and the naturality of the exterior derivative.}

\section{The de Rham complex on Carnot groups and its associated spectral complexes}\label{chapter truncated multicomplex}
The main result of this paper concerns the commutativity of the Pansu pullback with the differentials of the spectral complexes introduced in \cite{tripaldi2026spectralcomplexestruncatedmulticomplexes}. In order to make this paper as self-contained as possible, we recall these complexes in the setting of Carnot groups. Starting from the decomposition of the de Rham differential according to weight increase, we describe the resulting truncated multicomplex and its associated spectral sequence. We also recall the Rumin complex, whose differential determines the weight increments used to select the spectral complexes. These are then presented through mixed quotients of cycle and boundary spaces, connected by differentials drawn from the corresponding pages of the spectral sequence. Unlike in \cite{tripaldi2026spectralcomplexestruncatedmulticomplexes}, we keep the spectral complexes as quotient spaces (see Subsection \ref{subsect: Orthogonal realisations of the spectral complexes} for a short explanation of this choice).

Let us first fix some notation. Given an $n$-dimensional (real) Lie algebra $\mathfrak{g}$ with basis $(X_1,\ldots,X_n)$, we will denote its dual space by $\mathfrak{g}^\ast$. This is the vector space of all (real-valued) linear functionals on the elements of $\mathfrak{g}$, also referred to as the space of 1-covectors.

\begin{definition}[Dilations]\label{def: dilations}
    Let \(G\) be a connected, simply connected Lie group with Lie algebra
\(\mathfrak g\). A family of dilations on \(\mathfrak g\) is a
one-parameter family of Lie algebra automorphisms
$\{ \delta_\lambda\}_{\lambda>0}$
of the form $\delta_\lambda=\operatorname{Exp}(A\ln\lambda)$,
where \(A:\mathfrak g\to\mathfrak g\) is a diagonalisable derivation
with strictly positive eigenvalues and $\operatorname{Exp}$ is the exponential on the space of morphisms of the vector space $\mathfrak{g}$. If \(\mathfrak g\) admits such a
family, then \(G\) is called a homogeneous Lie group.

    Let $(X_1,\ldots,X_n)$ be a basis of eigenvectors of $A$. In this basis, $\delta_\lambda$ is represented by the diagonal matrix $\operatorname{Mat}(\delta_\lambda)=\operatorname{diag}(\lambda^{\nu_1},\ldots,\lambda^{\nu_n})$, so that $\delta_\lambda X_i=\lambda^{\nu_i}X_i$ for each $i=1,\ldots,n$. The eigenvalues $\nu_1,\ldots,\nu_n$ are called the \textit{weights} of the dilations.
    \end{definition}

    Let $0<w_1<\cdots<w_s$
be the distinct weights, and denote by \(V_{w_j}\) the
eigenspace of \(A\) corresponding to \(w_j\). Then the Lie algebra $\mathfrak{g}$ decomposes into a direct sum
\begin{align}\label{eq: direct sum decomp}
        \mathfrak{g}=\bigoplus_{j=1}^sV_{w_j}\ \text{ such that }\ [V_{w_i},V_{w_j}]\subseteq V_{w_i+w_j}\ ,\ 1\le i,j\le s\,.
    \end{align}
    As a direct consequence, we get that $\mathfrak{g}$ (and hence also the homogeneous Lie group $G$ with Lie algebra $\mathfrak{g}$) is nilpotent. 

    Therefore, the Lie group $G$ we are considering is connected simply-connected and nilpotent, which implies that its exponential map $\exp\colon\mathfrak{g}\to G$ is a bijection and a global diffeomorphism. 
    The dilations of the Lie algebra therefore
induce Lie group automorphisms $\exp\circ\delta_\lambda\circ\exp^{-1}\colon G\longrightarrow G$. We will keep the same notation for the dilations on the group $\delta_\lambda\colon G\to G$, so that $\delta_\lambda\exp X=\exp\delta_\lambda X$ for any $X\in\mathfrak{g}$.

    When the weights of the dilations may be assumed to be integers, the group is said to be (positively) \textit{gradable}. If in addition one can take $w_1=1$ with $V_{1}$  generating the whole Lie algebra $\mathfrak{g}$ via successive brackets, the Lie group $G$ is called \textit{stratifiable} and we call the corresponding direct sum decomposition a \textit{stratification}. A Lie group $G$ may admit several non-equivalent homogeneous structures \cite{hakavuori2022gradings}.
    \begin{definition}[Carnot groups]\label{def: carnot group}
    We say that a group $G$ is \textit{Carnot} if it is stratifiable, we pick the homogeneous structure associated with the stratification, and we fix a scalar product on $V_1$. The scalar
product is extended by left translations to the horizontal
distribution $H_xG:=(dL_x)_eV_1$, at each $x\in G$
and determines a left-invariant Carnot-Carath\'eodory distance on \(G\). As such, Carnot groups represent the simplest examples of subRiemannian manifolds.
\end{definition}
    \begin{remark}
        When dealing with a Lie group $G$, one can consider the subcomplex of the de Rham complex consisting of the left-invariant differential forms $\Omega_L^\bullet(G)$. A left-invariant $k$-form is uniquely determined by its value at the identity, where it defines a linear map $\bigwedge^k\mathfrak{g}\to\R$, by identifying the tangent space at the identity with the Lie algebra $\mathfrak{g}$, and so $\Omega^k_L(G)\cong\bigwedge^k\mathfrak{g}^\ast$. Moreover,  we can identify the tangent space $T_xG$ to $G$ at any point $x\in G$ with $\mathfrak{g}$ by means of the isomorphism $(dL_x)_{e}$, where $L_x$ denotes the left-translation by $x\in G$. For a given
\(\xi\in\bigwedge^k\mathfrak g^*\), the corresponding left-invariant
form \(\xi^L\) is defined by
\[
\xi_x^L(v_1,\ldots,v_k)
=
\xi\bigl(
(dL_{x^{-1}})_xv_1,\ldots,
(dL_{x^{-1}})_xv_k
\bigr)\ \text{ for every }x\in G \text{ and }v_1,\ldots,v_k\in T_xG\,.
\]
Consequently, left translations give a natural identification
\[
\mathcal C^\infty(G)\otimes
\bigwedge\nolimits^k\mathfrak g^*
\longrightarrow
\Omega^k(G)\ ,\ 
\
f\otimes\xi\longmapsto f\,\xi^L\,,
\]
where $(f\,\xi^L)_x=f(x)\xi_x^L$ at each $x\in G$.
This is an isomorphism of \(\mathcal C^\infty(G)\)-modules.

In particular, after fixing a basis of \(\mathfrak g^*\), every smooth
differential form can be written uniquely as a finite sum of smooth
coefficient functions multiplied by left-invariant forms.
    \end{remark}

We now define the fibrewise action which determines the weight of a
smooth differential form. It is important to distinguish this action
from the geometric pullback by the group automorphism
\(\delta_\lambda\colon G\to G\).

For any given $f\in\mathcal{C}^\infty(G)$ and $\xi\in\bigwedge^k\mathfrak{g}^\ast$, we define
\begin{equation}
\label{eq:fibrewise-dilation-forms} \delta_\lambda(f\,\xi^L)
:=
f\,
\bigl(\delta_\lambda^*\xi\bigr)^L\ ,\ \text{ where }\bigl(\delta_\lambda^*\xi\bigr)
(X_1,\ldots,X_k)
=
\xi\bigl(
\delta_\lambda X_1,\ldots,
\delta_\lambda X_k
\bigr)
\end{equation}
and extend this definition linearly to \(\Omega^k(G)\). Moreover, these new dilation maps  respect the wedge product, since $\delta_\lambda(\alpha\wedge\beta)=\delta_\lambda\alpha\wedge\delta_\lambda\beta$ for each $\alpha,\beta\in\Omega^\bullet(G)$.

Notice that these \(\delta_\lambda\) only act on the covector part of a
differential form and leave its coefficient functions unchanged. In
contrast, the geometric pullback satisfies $\delta_\lambda^\ast(f\,\xi^L)=(f\circ\delta_\lambda)\,(\delta_\lambda^\ast\xi)^L$.
The two actions therefore agree on left-invariant forms, but not on
arbitrary smooth forms. Going forward, we will simplify the notation and simply write $f\xi=f\otimes\xi$ instead of $f\xi^L$.
\begin{definition}[Weights of forms]
\label{def: weights of forms}A smooth differential form \(\alpha\in\Omega^k(G)\) is said to have
pure weight \(p\) if $\delta_\lambda\alpha=\lambda^p\alpha$ for every $\lambda>0$.
In this case, we write $w(\alpha)=p$ and we denote the space of smooth \(k\)-forms of pure weight \(p\) by $\Omega^{p,k-p}(G)$.

Every smooth function has weight \(0\), because
\(\delta_\lambda\) acts trivially on the coefficient functions.
If $\mathfrak g=V_1\oplus\cdots\oplus V_s$
is a stratification, then the \textit{homogeneous dimension} of \(G\) is defined as $Q
=
\sum_{j=1}^s j\dim V_j$ and every nonzero top-degree form has pure weight \(Q\). For the
Carnot-Carathéodory distance fixed in
Definition~\ref{def: carnot group}, \(Q\) is also the Hausdorff
dimension of \(G\).

As a direct consequence we get the following direct sum decomposition of the space of smooth forms in each degree according to their weights, i.e.
\[
\Omega^k(G)
=
\bigoplus_{p=0}^Q\Omega^{p,k-p}(G)=\bigoplus_{p=0}^Q\C^\infty(G)\otimes\bigwedge\nolimits^{p,k-p}\mathfrak{g}^\ast\,.
\]
Finally, for brevity, we will also use the notation
\[
\Omega^{\geq p,k}(G)
:=
\bigoplus_{q\geq p}\Omega^{q,k-q}(G)\,.
\]
\end{definition}
Without loss of generality, one can consider a basis $(X_1,\ldots,X_n)$ adapted to the direct sum decomposition \eqref{eq: direct sum decomp}, whose expression can be simplified in the case of a stratification, so that
\begin{align*}
V_i=\operatorname{span}_\R\{X_{m_{i-1}+1},\ldots,X_{m_i}\}\ , \ \text{where }m_0=0\text{ and }m_i=\sum_{r=1}^i\operatorname{dim}V_r\ \text{ for }1\le i\le s\,.
\end{align*}
In particular, this implies that for any $i=1,\ldots,s$
\begin{align*}
    \delta_\lambda X_j=\lambda^iX_j \text{ whenever }m_{i-1}<j\le m_i\ \Longleftrightarrow X_j\in V_i\ \Longleftrightarrow\ w(X_j)=i\,.
\end{align*}
Likewise, if \((\theta_1,\ldots,\theta_n)\) is the dual basis,
regarded also as a left-invariant coframe on \(G\), then
\begin{align*}
    \delta_\lambda \theta_j=\lambda^i\theta_j \text{ whenever }m_{i-1}<j\le m_i\ \Longleftrightarrow \theta_j\in V_i^\ast\ \Longleftrightarrow\ w(\theta_j)=i\,.
\end{align*}
More generally, we have that $w\bigl(
f\,\theta_{j_1}\wedge\cdots\wedge\theta_{j_k}
\bigr)
=
w(\theta_{j_1})+\cdots+w(\theta_{j_k})$ for every $f\in\C^\infty(G)$. 
\begin{lemma}[Forms of different weight are linearly independent]\label{lem: different weights}
    Let us consider $\alpha_1,\alpha_2\in\Omega^k(G)$ two arbitrary $k$-forms. If they are both homogeneous with $w(\alpha_1)\neq w(\alpha_2)$, then they are linearly independent.
\end{lemma}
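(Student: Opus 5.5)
The plan is to use the eigenvalue characterisation of weight directly: forms of pure weight are eigenvectors of the fibrewise dilations $\delta_\lambda$ from \eqref{eq:fibrewise-dilation-forms}, and eigenvectors for distinct eigenvalues are linearly independent. We read the statement, as one must, for \emph{nonzero} $\alpha_1,\alpha_2$, since the zero form satisfies $\delta_\lambda 0=\lambda^p 0$ for every $p$. We prove linear independence over $\mathbb R$. Write $p=w(\alpha_1)$ and $q=w(\alpha_2)$, with $p\neq q$, and suppose $a\alpha_1+b\alpha_2=0$ for some $a,b\in\mathbb R$.

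First, $\delta_\lambda$ is linear on $\Omega^k(G)$, since it was defined on elements $f\,\xi^L$ and extended linearly. Applying it to the relation and using Definition~\ref{def: weights of forms} gives
\[
a\lambda^p\alpha_1+b\lambda^q\alpha_2=0\qquad\text{for every }\lambda>0.
\]
Next, subtract $\lambda^p$ times the original relation to get $b(\lambda^q-\lambda^p)\alpha_2=0$. Choosing $\lambda=2$, we have $\lambda^q\neq\lambda^p$ because $p\neq q$. Since $\alpha_2\neq0$, this forces $b=0$. Then $a\alpha_1=0$ with $\alpha_1\neq 0$ gives $a=0$, which proves independence.

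A second route, which gives the same conclusion, uses the decomposition $\Omega^k(G)=\bigoplus_p\Omega^{p,k-p}(G)$. The forms $\alpha_1$ and $\alpha_2$ lie in distinct summands of this direct sum, which intersect trivially. Alternatively one can argue in coordinates: expanding in the basis $\theta_{j_1}\wedge\cdots\wedge\theta_{j_k}$, a form of pure weight $p$ only involves monomials of weight $p$. This is because $\delta_\lambda$ acts diagonally on the monomials with eigenvalue $\lambda^{w(\theta_{j_1})+\cdots+w(\theta_{j_k})}$, and one compares coefficients using uniqueness of the expansion.

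The only point needing any care is the implicit nonvanishing hypothesis. A related subtlety is that the statement is about linear independence over $\mathbb R$ and not over $\mathcal C^\infty(G)$. Over $\mathcal C^\infty(G)$ the same computation only yields $g\,\alpha_2=0$, which does not force the coefficient function $g$ to vanish wherever $\alpha_2$ does. Apart from this, the argument is routine.
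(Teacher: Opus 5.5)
Your proof is correct, and your main argument takes a different route from the paper's. The paper works in the adapted coframe. It first treats left-invariant covectors, reducing to monomials $\theta_{i_1}\wedge\cdots\wedge\theta_{i_k}$ and separating those of different weight via the splitting $\mathfrak g=\bigoplus_j V_{w_j}$. It then passes to smooth forms by writing a form of pure weight $p$ as $\sum_j f_j\otimes\xi_j$ with $w(\xi_j)=p$; this is essentially your third, coordinate route. Your primary argument uses only that $\delta_\lambda$ is $\mathbb R$-linear and that forms of pure weight are its eigenvectors with eigenvalue $\lambda^{w(\alpha)}$. So it needs no basis and avoids the reduction to monomials, which tacitly uses that a covector of pure weight $p$ expands only in monomials of weight $p$. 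It also extends, by the usual Vandermonde or induction argument, to any finite family of nonzero forms with pairwise distinct weights. That is the version the paper actually invokes when proving that $(\Omega^\bullet(G),d)$ is a multicomplex, where $2s+1$ components of different weights must each vanish. What the eigenvector argument does not give is spanning: it shows the sum $\sum_p\Omega^{p,k-p}(G)$ is direct, but not the equality $\Omega^k(G)=\bigoplus_p\Omega^{p,k-p}(G)$ or the identification $\Omega^{p,k-p}(G)=\mathcal C^\infty(G)\otimes\bigwedge^{p,k-p}\mathfrak g^\ast$. Those still require the monomial basis that diagonalises $\delta_\lambda$, which the basis-based approach supplies along the way. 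For the same reason, your second route is non-circular only if that decomposition is first established from the monomial basis, since the paper itself presents it as a consequence of this lemma. Your remarks on the implicit nonvanishing hypothesis and on $\mathbb R$- versus $\mathcal C^\infty(G)$-linear independence are accurate. The $\mathbb R$-reading matches the paper's convention of regarding $\Omega^{p,k-p}(G)$ as real vector spaces.
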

\begin{proof}
    
Let us first prove the result for left-invariant forms that is $\xi_1,\xi_2\in\bigwedge^k\mathfrak{g}^\ast$ with $w(\xi_1)\neq w(\xi_2)$.

    If $k=1$ and $w(\xi_1)=p_1\neq w(\xi_2)=p_2$, by definition of weights we have that $\xi_1^\ast\in V_{p_1}$ and $\xi_2^\ast\in V_{p_2}$ and hence they are linearly independent by the given direct sum decomposition.
    
    If $k>1$, given $\xi_1,\xi_2\in\bigwedge^k\mathfrak{g}^\ast$ with different weights, then without loss of generality we can assume $\xi_1=\theta_{i_1}\wedge\cdots\wedge\theta_{i_k}$ and $\xi_2=\theta_{j_1}\wedge\cdots\wedge\theta_{j_k}$ with
    \begin{align*}
        w(\xi_1)=w(\theta_{i_1})+\cdots+w(\theta_{i_k})\neq w(\xi_2)=w(\theta_{j_1})+\cdots+w(\theta_{j_k})\,. 
    \end{align*}
    This means that there exists at least one $l\in\{1,\ldots,k\}$ such that $w(\theta_{i_l})=p_1\neq w(\theta_{j_l})=p_2$, i.e. $\theta_{i_l}^\ast\in V_{p_1}$ and $\theta_{j_l}^\ast\in V_{p_2}$ belong to different subspaces of the direct sum decomposition.
    
    Finally, the claim follows from the fact that a smooth form $\alpha\in\Omega^k(G)$ is homogeneous of weight $p$ if $\alpha=\sum_{j}f_j\otimes\xi_j $ where $f_j\in \mathcal{C}^\infty(G)$ and $w(\xi_j)=p$ for each $j$.
\end{proof}
As a consequence, the space of smooth forms $\Omega^k(G)$ admits a direct sum decomposition given by the weight. Throughout this paper, we will express this decomposition using the following notation
\begin{align}\label{eq: direct sum decomp of forms according to weight}
    \Omega^k(G)=\Gamma\left(\bigwedge\nolimits^k\mathfrak{g}^\ast\right)=\bigoplus_{\substack{p+q=k\\0\le p\le Q}}\Gamma\left(\bigwedge\nolimits^{p,q}\mathfrak{g}^\ast\right)=\bigoplus_{\substack{p+q=k\\ 0\le p\le Q}}\mathcal{C}^\infty(G)\otimes\bigwedge\nolimits^{p,q}\mathfrak{g}^\ast=\bigoplus_{\substack{ p+q=k\\ 0\le p\le Q}}\Omega^{p,q}(G)\,,
\end{align}
where $\bigwedge^{p,q}\mathfrak{g}^\ast$ and $\Omega^{p,q}(G)$ denote the spaces of left-invariant and smooth forms of weight $p$ and degree $p+q=k$.

When working with differential forms on Carnot groups, $k$-forms of weight $p$ are typically denoted by $\Omega^{k,p}(G)$. However, since spectral sequence techniques play a central role in this paper, we adopt the notation that is standard in that context.

\begin{lemma}\label{lem: decomposing d according to weight increase}
    Given a homogeneous group $G$, the direct sum decomposition \eqref{eq: direct sum decomp} of its Lie algebra $\mathfrak{g}$ induces a decomposition of the exterior de Rham differential $d$ acting on smooth forms which can be easily expressed in terms of the weight increase. Given an arbitrary form $\alpha\in\Omega^{p,k-p}(G)$ of homogeneous weight $p$ and degree $k$, one can write
    \begin{align*}
        d\alpha=d_0\alpha+d_{w_1}\alpha+\cdots+d_{w_s}\alpha
    \end{align*}
    where $d_0\alpha\in\Omega^{p,k+1-p}(G)$ and $d_{w_i}\alpha\in\Omega^{p+w_i,k+1-w_i-p}(G)$ for each $i=1,\ldots,s$.
\end{lemma}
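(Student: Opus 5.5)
Write $\alpha=\sum_j f_j\,\xi_j$ with $f_j\in\mathcal C^\infty(G)$ and $\xi_j\in\bigwedge^{p,k-p}\mathfrak g^\ast$ left-invariant of pure weight $p$; by the decomposition \eqref{eq: direct sum decomp of forms according to weight} this is always possible. By the Leibniz rule,
\[
d\alpha=\sum_j df_j\wedge\xi_j+\sum_j f_j\,d\xi_j ,
\]
and I will treat the two kinds of terms separately. For the function part, $df_j=\sum_{l=1}^n (X_lf_j)\,\theta_l$ in the left-invariant coframe. Grouping the indices $l$ according to the layer $V_{w_i}$ containing $X_l$ gives $df_j=\sum_{i=1}^s d^{(i)}f_j$ with $d^{(i)}f_j=\sum_{X_l\in V_{w_i}}(X_lf_j)\theta_l$ of pure weight $w_i$. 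Since weights add under wedge products (the fibrewise dilations are multiplicative), $d^{(i)}f_j\wedge\xi_j$ has pure weight $p+w_i$ and degree $k+1$. These terms therefore contribute to $d_{w_i}\alpha$, and none of them contributes to $d_0$.

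For the left-invariant part, I use the Chevalley--Eilenberg formula. On $\mathfrak g^\ast$, $d\theta(X,Y)=-\theta([X,Y])$. Since the dilations $\delta_\lambda$ are Lie algebra automorphisms, $d$ commutes with $\delta_\lambda^\ast$ on $\bigwedge^\bullet\mathfrak g^\ast$: indeed $\delta_\lambda^\ast$ is an algebra automorphism of the exterior algebra, and on $1$-forms we have $d(\delta_\lambda^\ast\theta)(X,Y)=-\theta(\delta_\lambda[X,Y])=(\delta_\lambda^\ast d\theta)(X,Y)$. Hence $d\xi_j$ is again of pure weight $p$, so $f_j\,d\xi_j\in\Omega^{p,k+1-p}(G)$. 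These terms form $d_0\alpha:=\sum_j f_j\,d\xi_j$.

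Setting $d_{w_i}\alpha:=\sum_j d^{(i)}f_j\wedge\xi_j$ gives the claimed decomposition. Uniqueness of the components, and hence independence from the chosen representation of $\alpha$, follows from Lemma~\ref{lem: different weights}: the weights $p,p+w_1,\dots,p+w_s$ are pairwise distinct because $0<w_1<\dots<w_s$, so the summands are the weight projections of $d\alpha$. The operators $d_0,d_{w_i}$ then extend linearly to all of $\Omega^k(G)$.

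The only point requiring care is the claim that $d$ preserves weight on left-invariant forms. This reduces to the defining property that the dilations are automorphisms, i.e.\ to $[V_{w_i},V_{w_j}]\subseteq V_{w_i+w_j}$. In the gradable case one should note that weights $p+w_i$ need not be integers, but nothing in the argument uses integrality.
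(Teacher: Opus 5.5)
Your proof is correct and follows the paper's argument essentially step by step. You split $d\alpha$ by the Leibniz rule, group $df_j=\sum_l (X_lf_j)\theta_l$ according to the layers $V_{w_i}$, and use that the dilations are Lie algebra automorphisms, so the Chevalley--Eilenberg differential commutes with $\delta_\lambda^\ast$ and preserves weight on left-invariant forms. Your extra observation that the summands are exactly the weight projections of $d\alpha$ (by Lemma~\ref{lem: different weights}), and hence independent of the chosen representation of $\alpha$, is a useful point the paper leaves implicit. The only slip is that your closing remark should refer to general homogeneous groups rather than gradable ones, since gradable means the weights are integers.
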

\begin{proof}
    Given an arbitrary $k$-form of weight $p$, we have $\alpha=\sum_{j}f_j\otimes\xi_j=\sum_jf_j\xi_j$ with $f_j\in \mathcal{C}^\infty(G)$ and $\xi_j\in\bigwedge^{p,k-p}\mathfrak{g}^\ast$. The exterior differential applied to $\alpha$ has the following expression\begin{equation}\label{explicit formula exterior derivative}
d\bigg(\sum_jf_j\xi_j\bigg)=\sum_j\left(df_j\wedge\xi_j+f_jd\xi_j\right)=\sum_jdf_j\wedge\xi_j+\sum_jf_jd\xi_j\,.
\end{equation}
If we consider a basis $(X_1,\ldots,X_n)$ adapted to the direct sum decomposition \eqref{eq: direct sum decomp}, we obtain a very explicit expression for the first summand, that is
\begin{align*}
    \sum_jdf_j\wedge\xi_j=\sum_j\sum_{l=1}^nX_lf_j\theta_l\wedge\xi_j=\sum_{i=1}^s\sum_{X_l\in V_{w_i}}\sum_jX_lf_j\theta_l\wedge\xi_j=\sum_{i=1}^s d_{w_i}\alpha\,.
\end{align*}
For each $i=1,\ldots,s$, we see that
\begin{align*}
    d_{w_i}\alpha=\sum_{X_l\in V_{w_i}}\sum_jX_lf_j\,\theta_l\wedge\xi_j\in\Omega^{p+w_i,k+1-p-w_i}(G)
\end{align*}
since each $X_l\in V_{w_i}$ and so $w(\theta_l)=w_i$.

Regarding the second summand, one needs to show that unless $d\xi_j$ vanishes, we have $w(d\xi_j)=w(\xi_j)=p$, i.e. it keeps the weight constant. In the case of homogeneous Lie groups, one can show this using the group's dilations. Indeed, this second summand coincides with the action of the exterior derivative on left-invariant forms, which can be seen as
\begin{align}\label{eq: formula for d_0}
    d\xi_j(X_1,\ldots,X_{k+1})=\sum_{1\le i<l\le k+1}(-1)^{i+l}\xi_j([X_i,X_l],X_1,\ldots,\hat{X}_i,\ldots,\hat{X}_l,\ldots,X_{k+1})\ \text{ for all }X_i\in\mathfrak{g}\,.
\end{align}
This formula, combined with the fact that dilations $\delta_\lambda$ are automorphisms of the Lie algebra $\mathfrak{g}$, readily implies that $d$ applied to $\bigwedge^k\mathfrak{g}^\ast$ commutes with the dilations. More explicitly, the fact that  $d(\delta_\lambda\xi_j)=\delta_\lambda(d\xi_j)$ implies that $w(d\xi_j)=w(\xi_j)=p$ and so
\begin{align*}
    d_0\alpha=\sum_jf_jd\xi_j\in\Omega^{p,k+1-p}(G)\,.
\end{align*}
\end{proof}
In order to apply the spectral sequence machinery developed in \cite{tripaldi2026spectralcomplexestruncatedmulticomplexes}, we first need to show that the cochain complex that we are working on, namely the de Rham complex, is indeed a truncated multicomplex.
\begin{definition}[Definition 2.1 in \cite{livernet2020spectral}] \label{def: multicomplex}Let $k$ be a commutative unital ground ring. An $s$-multicomplex (also known as twisted chain complex) is a $(\Z,\Z)$-graded $k$-module $\mathcal{C}$ equipped with $k$-linear maps $d_i\colon\mathcal{C}\to\mathcal{C}$ for $ i\ge 0$ of bidegree $\vert d_i\vert=(i,1-i)$ such that
\begin{align}\label{eq: rules maps}
    \sum_{i+j=n}d_id_j=0\ \text{ for all }n\ge 0\text{ and }d_k=0\text{ for all }k >s\,.
\end{align}
\end{definition}
We should also mention that in Definition \ref{def: multicomplex} we are choosing a cohomological sign and degree convention for the differentials $d_i$.

In order to simplify the notation of the following proposition, we will limit our considerations to Carnot groups, where the direct sum decomposition \eqref{eq: direct sum decomp} becomes a stratification $\mathfrak{g}=V_1\oplus\cdots\oplus V_s$ and so by Lemma \ref{lem: decomposing d according to weight increase} we have a simpler expression for the exterior derivative $d=d_0+d_1+\cdots+d_s$. Notice however that the exact same result holds for all positively graded Lie groups.
\begin{proposition}
    The de Rham complex $(\Omega^\bullet(G),d)$ on a Carnot group with stratification $\mathfrak{g}=V_1\oplus\cdots V_s$ is a truncated $s$-multicomplex.
\end{proposition}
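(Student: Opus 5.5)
We need to check three things for $(\Omega^\bullet(G),d)$ with the bigrading $\Omega^{p,q}(G)$ of \eqref{eq: direct sum decomp of forms according to weight}: that each $d_i$ has bidegree $(i,1-i)$; that $d_k=0$ for $k>s$; and that the relations $\sum_{i+j=n}d_id_j=0$ hold for every $n\ge 0$. The first two come straight from Lemma~\ref{lem: decomposing d according to weight increase}. The third will follow from $d^2=0$ after splitting by weight.

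For the grading, note that $\Omega^{p,q}(G)$ consists of forms of weight $p$ and degree $p+q$. Lemma~\ref{lem: decomposing d according to weight increase}, specialised to a stratification (so $w_i=i$), gives $d=d_0+d_1+\cdots+d_s$ with
\[
d_i\colon\Omega^{p,q}(G)\to\Omega^{p+i,q+1-i}(G),
\]
which is exactly bidegree $(i,1-i)$. We extend each $d_i$ linearly to all of $\Omega^\bullet(G)$ using the direct sum decomposition. Since there are no weights larger than $s$, we set $d_k=0$ for $k>s$, and the decomposition $d=\sum_{i\ge0}d_i$ is consistent with this.

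For the relations, fix $\alpha\in\Omega^{p,q}(G)$ and expand
\[
0=d^2\alpha=\sum_{i,j=0}^s d_id_j\alpha .
\]
The term $d_id_j\alpha$ lies in $\Omega^{p+i+j,\,q+2-i-j}(G)$, so it has pure weight $p+n$ with $n=i+j$. Grouping the terms by $n$ gives $0=\sum_{n=0}^{2s}\beta_n$ with $\beta_n=\sum_{i+j=n}d_id_j\alpha$ of pure weight $p+n$. The weight decomposition is direct, by Lemma~\ref{lem: different weights} and the consequence stated after it, so each $\beta_n$ must vanish. When $n>2s$ the sum is automatically zero because the $d_k$ with $k>s$ vanish. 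Linearity in $\alpha$ then gives \eqref{eq: rules maps} on all of $\Omega^\bullet(G)$.

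The main point needing care is the use of Lemma~\ref{lem: different weights}. That lemma is stated for two forms, but here we need independence of finitely many summands of distinct weights. I would justify this from the decomposition $\Omega^k(G)=\bigoplus_p\mathcal C^\infty(G)\otimes\bigwedge^{p,k-p}\mathfrak g^*$, where the sum is direct because an adapted coframe splits $\bigwedge^k\mathfrak g^*$ by weight. Everything else is routine bookkeeping. Finally, the same argument works verbatim for positively graded groups, with the $d_{w_i}$ in place of the $d_i$ and weight increments $w_i$.
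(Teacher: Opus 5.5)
Your proof is correct and follows the paper's argument essentially verbatim. The bigrading and the maps $d_i$ of bidegree $(i,1-i)$ come from Lemma~\ref{lem: decomposing d according to weight increase}, and the relations $\sum_{i+j=n}d_id_j=0$ follow by splitting $d^2\alpha=0$ into weight components and using that the weight decomposition is direct. Your remark that this needs independence of finitely many components of distinct weights, not just two, is a fair sharpening of how the paper cites Lemma~\ref{lem: different weights}.
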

\begin{proof}
    The direct sum decomposition according to weights given in \eqref{eq: direct sum decomp of forms according to weight} endows the space of smooth forms $\Omega^\bullet(G)$ with a bigrading. In particular, each $\Omega^{p,q}(G)$ is a $\mathbb R$-module of bidegree $(p,q)\in\N\times\Z$. Moreover, in Lemma \ref{lem: decomposing d according to weight increase} we established the existence of structure maps $d_i\colon\Omega^\bullet(G)\to\Omega^\bullet(G)$ of bidegree $\vert d_i\vert=(i,1-i)$. We are then left to prove that the formulae \eqref{eq: rules maps} also hold.

    These follow directly from the fact that $(\Omega^\bullet(G),d)$ is a complex, i.e. $d^2=0$, and that forms of different weight are linearly independent by Lemma \ref{lem: different weights}. Indeed, given a $k$-form $\alpha\in\Omega^{p,k-p}(G)$, if we expand the expression for $d^2\alpha$ according to the weight we get
    \begin{align*}
        d^2\alpha=&d(d_0\alpha+d_1\alpha+\cdots+d_s\alpha)=(d_0+d_1+\cdots+d_s)(d_0\alpha+d_1\alpha+\cdots+d_s\alpha)\\=&d_0^2\alpha+(d_0d_1+d_1d_0)\alpha+(d_0d_2+d_1^2+d_2d_0)\alpha+\cdots+d_s^2\alpha=\sum_{n=0}^{2s}\sum_{i+j=n}d_jd_i\alpha=0\,.
    \end{align*}
    For each $n=0,\ldots,2s$, we have $\sum_{i+j=n}d_jd_i\alpha\in\Omega^{p+n,k+2-p-n}(G)$, i.e. each summand has different weight. By Lemma \ref{lem: different weights}, this implies that each summand must be zero.
\end{proof}

\begin{remark}
    Each subspace $\Omega^{p,k-p}(G)\subset\Omega^k(G)$ is naturally a $\mathcal C^\infty(G)$-submodule. However, for $i>0$, the differential operators $d_i$ are only $\mathbb R$-linear. Thus, when considering the multicomplex structure on $\Omega^\bullet(G)$, we regard the spaces $\Omega^{p,k-p}(G)$ as real vector subspaces.
\end{remark}

Finally, since the spectral complexes can be defined in terms of the Rumin complex, and some of the results established below involve Rumin forms or the Rumin differential, we briefly recall the construction of this complex. We refer to \cite{rumin_grenoble,rumin_palermo,CompensatedCompactness,FT} for a more comprehensive treatment.

The spaces of Rumin forms $E_0^\bullet$ are realised as graded subspaces of $\Omega^\bullet(G)$ representing the cohomology of the complex $(\Omega^\bullet(G),d_0)$. In order to express them as subspaces rather than as quotient spaces $\ker d_0/\operatorname{Im}d_0$, we introduce an inner product on $\bigwedge^\bullet\mathfrak g^*$ that allows us to identify an orthogonal complement of $\operatorname{Im}d_0$ in $\ker d_0$.

Fix an inner product on $\mathfrak g$ adapted to the stratification, i.e. $V_i\perp V_j$ when $i\neq j$), obtained by extending the inner product on $V_1$ (see Definition \ref{def: carnot group}). We can use this scalar product to fix an ordered orthonormal basis $(X_1,\ldots,X_n)$ of $\mathfrak{g}$, then orient $\mathfrak{g}$ according to this  basis, so that the dual basis $(\theta_1,\ldots,\theta_n)$ is orthonormal and $\operatorname{vol}=\theta_1\wedge\cdots\wedge\theta_n$ is the associated unit volume form. Such a product on $\mathfrak{g}$ can be extended canonically to $\mathfrak{g}^\ast$ and then to the space of left-invariant $k$-forms $\Omega_L^k(G)\cong\bigwedge^k\mathfrak g^\ast$, and we denote the resulting inner product by $\langle\cdot,\cdot\rangle_k$. Since the scalar product $\langle\cdot,\cdot\rangle_k$ defined on each $\bigwedge^k\mathfrak{g}^\ast$ is adapted to the stratification, we have that covectors of different weight are orthogonal, i.e. for each degree 
\begin{align*}
    \text{ given }\xi_1\in\bigwedge\nolimits^{p_1,k-p_1}\mathfrak{g}^\ast\ \text{ and }\ \xi_2\in\bigwedge\nolimits^{p_2,k-p_2}\mathfrak{g}^\ast\ ,\ \text{ if }p_1\neq p_2\text{ then }\langle\xi_1,\xi_2\rangle=0\,.
\end{align*}
It is then possible to define the Hodge-$\star$ operator as the linear map acting on $\bigwedge^\bullet\mathfrak{g}^\ast$ as
\begin{align*}
    \star\colon\bigwedge\nolimits^k\mathfrak{g}^\ast\xrightarrow[]{\cong}\bigwedge\nolimits^{n-k}\mathfrak{g}^\ast\ ,\ \xi_1\wedge\star\xi_2:=\langle\xi_1,\xi_2\rangle_k\operatorname{vol}\ \text{ for any }\xi_1,\xi_2\in\bigwedge\nolimits^k\mathfrak{g}^\ast\,.
\end{align*}

The inner product $\langle\cdot,\cdot\rangle_k$ on $\bigwedge^k\mathfrak g^\ast$ extends pointwise to a $\mathcal{C}^\infty(G)$-bilinear product on $\Omega^k(G)$. More precisely, under the identification $\Omega^k(G)
\cong
\mathcal C^\infty(G)\otimes\bigwedge\nolimits^k\mathfrak g^\ast$, we define
\begin{equation}\label{eq: def pointwise scalar product}
\left\langle
\sum_i f_i\xi_i,\sum_j g_j\eta_j
\right\rangle_k
:=
\sum_{i,j}f_ig_j\langle\xi_i,\eta_j\rangle_k
\in\mathcal C^\infty(G)\,.
\end{equation}
The Hodge-\(\star\) operator also extends accordingly to a
\(\mathcal C^\infty(G)\)-linear map
$$
\star\colon\Omega^k(G)\xrightarrow[]{\cong}\Omega^{n-k}(G)\ ,\ 
\alpha_1\wedge\star\alpha_2
=
\langle\alpha_1,\alpha_2\rangle_k\operatorname{vol}\,.
$$

For a linear subspace $W\subseteq\bigwedge^k\mathfrak{g}^\ast$, let $\operatorname{pr}_W\colon\bigwedge^k\mathfrak{g}^\ast\to W$ denote the corresponding finite-dimensional orthogonal projection. It extends $\C^\infty(G)$-linearly to the space of smooth forms by the map $\operatorname{Id}_{C^\infty(G)}\otimes\operatorname{pr}_W\colon\Omega^k(G)\to\C^\infty(G)\otimes W$. Whenever $S=\C^\infty(G)\otimes W\subseteq\Omega^k(G)$ is a submodule of this form, we denote this fibrewise orthogonal projection by $\operatorname{pr}_S$. The orthogonal complements and projections below associated with $\operatorname{Im}d_0$, $\operatorname{Im}\delta_0$, and $E_0^\bullet$ are understood in this fibrewise sense.

We can use this inner product $\langle\cdot,\cdot\rangle_k$ just introduced to define the fibrewise, or algebraic, adjoint of $d_0$.

\begin{definition}[The adjoint of $d_0$]\label{def: delta_0} The algebraic adjoint of $d_0$ is the map $\delta_0\colon\Omega^{p,k-p}(G)\to\Omega^{p,k-1-p}(G)$ defined by
\begin{align*}
    \langle d_0\alpha_1,\alpha_2\rangle_{k}=\langle\alpha_1,\delta_0\alpha_2\rangle_{k-1}\ \text{ for any }\alpha_1\in\Omega^{k-1}(G)\text{ and }\alpha_2\in\Omega^{k}(G)\,.
\end{align*}    
\end{definition}
The operator $\delta_0$ preserves the weight of forms, namely $\delta_0\big(\Omega^{p,q}(G)\big)\subset\Omega^{p,q-1}(G)$. Indeed, this follows directly from the fact that forms of different weights are orthogonal and that $d_0$ preserves weight. Moreover, the associated Hodge-Laplacian 
\begin{align*}
    \Box_0:=d_0\delta_0+\delta_0d_0\colon\Omega^{p,k-p}(G)\to\Omega^{p,k-p}(G)\,.
\end{align*}
is a fibrewise symmetric algebraic operator preserving both the weight and the degree of forms.

\begin{definition}
    The space of Rumin forms $E_0^\bullet\subset \Omega^\bullet(G)$ is defined as the orthogonal complement of $\operatorname{Im}d_0$ within $\ker d_0$, and coincides with the space of harmonic forms for $\Box_0$:
    \begin{align*}
        E_0^\bullet:=\ker d_0\cap\big(\operatorname{Im}d_0\big)^\perp=\ker d_0\cap\ker \delta_0=\ker\Box_0\,.
    \end{align*}
\end{definition}

\begin{proposition}[Hodge decomposition for $\Box_0$]
For every degree $k$, the space of smooth $k$-forms admits the fibrewise orthogonal decomposition
\begin{align}\label{eq: Hodge decomp Box_0 degree}
\Omega^k(G)
=
\operatorname{Im}\bigl(
d_0\colon\Omega^{k-1}(G)\to\Omega^k(G)
\bigr)
\oplus
E_0^k
\oplus
\operatorname{Im}\bigl(
\delta_0\colon\Omega^{k+1}(G)\to\Omega^k(G)
\bigr).
\end{align}
Equivalently,
\begin{align}\label{eq: Hodge decomp Box_0}
\Omega^\bullet(G)
=
\operatorname{Im}d_0
\oplus
\ker\Box_0
\oplus
\operatorname{Im}\delta_0.
\end{align}
\end{proposition}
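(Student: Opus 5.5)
The decomposition is fibrewise, so the plan is to prove it first for the finite-dimensional space of left-invariant forms $\bigwedge^\bullet\mathfrak g^\ast$ with the inner product $\langle\cdot,\cdot\rangle_k$. Afterwards we extend it $\mathcal C^\infty(G)$-linearly through the identification $\Omega^k(G)\cong\mathcal C^\infty(G)\otimes\bigwedge^k\mathfrak g^\ast$. The key observation is that $d_0$ is $\mathcal C^\infty(G)$-linear. By the proof of Lemma~\ref{lem: decomposing d according to weight increase}, $d_0(\sum_j f_j\xi_j)=\sum_j f_j\,d\xi_j$. So $d_0=\operatorname{Id}\otimes d|_{\bigwedge\mathfrak g^\ast}$, and likewise $\delta_0=\operatorname{Id}\otimes d^\ast$, where $d^\ast$ is the Euclidean adjoint of the Chevalley--Eilenberg differential on $\bigwedge\mathfrak g^\ast$.

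On the finite-dimensional inner product space $\bigwedge^k\mathfrak g^\ast$, we write $A=d\colon\bigwedge^{k-1}\mathfrak g^\ast\to\bigwedge^k\mathfrak g^\ast$ and $B=d\colon\bigwedge^{k}\mathfrak g^\ast\to\bigwedge^{k+1}\mathfrak g^\ast$, so that $BA=0$. The steps are standard linear algebra.
\begin{enumerate}
\item We have $\operatorname{Im}A\perp\operatorname{Im}B^\ast$, since $\langle A x,B^\ast y\rangle=\langle BAx,y\rangle=0$.
\item We have $(\operatorname{Im}A\oplus\operatorname{Im}B^\ast)^\perp=\ker A^\ast\cap\ker B$, because $(\operatorname{Im}T)^\perp=\ker T^\ast$ in finite dimensions.
\item We have $\ker A^\ast\cap\ker B=\ker(AA^\ast+B^\ast B)$. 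One inclusion is trivial. For the other, $\langle(AA^\ast+B^\ast B)x,x\rangle=|A^\ast x|^2+|Bx|^2$, so vanishing forces $A^\ast x=0$ and $Bx=0$.
\end{enumerate}
This gives $\bigwedge^k\mathfrak g^\ast=\operatorname{Im}d\oplus\ker\Box\oplus\operatorname{Im}d^\ast$ orthogonally. Since $d$ and $d^\ast$ preserve weight, the same holds within each weight component.

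Tensoring with $\mathcal C^\infty(G)$ is exact on these finite-dimensional linear data. Concretely, we choose a basis adapted to the three orthogonal summands and expand any $\alpha=\sum_j f_j\xi_j$ in that basis. This shows $\Omega^k(G)=\mathcal C^\infty(G)\otimes\operatorname{Im}d\oplus\mathcal C^\infty(G)\otimes\ker\Box\oplus\mathcal C^\infty(G)\otimes\operatorname{Im}d^\ast$. It remains to identify these summands with $\operatorname{Im}d_0$, $\ker\Box_0$ and $\operatorname{Im}\delta_0$ on smooth forms.
\begin{itemize}
\item For $\operatorname{Im}d_0$: every $\sum_i f_i\,d\eta_i$ equals $d_0(\sum_i f_i\eta_i)$, and conversely $d_0$ of a smooth form lies in $\mathcal C^\infty(G)\otimes\operatorname{Im}d$. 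The same argument handles $\operatorname{Im}\delta_0$.
\item For $\ker\Box_0$: we expand in a basis of $\bigwedge^k\mathfrak g^\ast$ diagonalising the symmetric operator $\Box$. Then $\Box_0(\sum f_j e_j)=\sum\lambda_j f_j e_j$ vanishes exactly when $f_j=0$ for all $j$ with $\lambda_j\neq0$.
\end{itemize}
Orthogonality of the summands is pointwise, by the definition \eqref{eq: def pointwise scalar product}. Summing over $k$ gives \eqref{eq: Hodge decomp Box_0}.

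The main point needing care is the identification of the images of the operators on smooth forms with the $\mathcal C^\infty(G)$-spans of the algebraic images. This works precisely because $d_0$ and $\delta_0$ are $\mathcal C^\infty(G)$-linear, unlike $d_i$ for $i\ge1$. A subtlety is that the decomposition is only fibrewise orthogonal; there is no $L^2$ closure issue.
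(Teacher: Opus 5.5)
Your proof is correct and follows essentially the same route as the paper. Both arguments reduce to finite-dimensional fibrewise linear algebra using $(\operatorname{Im}T)^\perp=\ker T^\ast$ and $d_0^2=0$. The paper first splits $\Omega^k=\ker d_0\oplus\operatorname{Im}\delta_0$ and then $\ker d_0=\ker\Box_0\oplus\operatorname{Im}d_0$, whereas you take the orthogonal complement of $\operatorname{Im}d_0\oplus\operatorname{Im}\delta_0$ directly. You also spell out two points the paper leaves implicit: the identity $\ker d_0\cap\ker\delta_0=\ker\Box_0$, and the passage from $\bigwedge^\bullet\mathfrak g^\ast$ to smooth forms via the $\mathcal C^\infty(G)$-linearity of $d_0$ and $\delta_0$.
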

\begin{proof}
All orthogonal complements in this proof are understood fibrewise, and kernels and images are taken in the appropriate degrees. Since $d_0$ acts fibrewise as a linear map between finite-dimensional spaces, we have 
$(\ker d_0)^\perp=\operatorname{Im}\delta_0$.
Consequently, for every degree $k$,
        \begin{align*}
            \Omega^k(G)=&\ker d_0\oplus(\ker d_0)^\perp=\ker d_0\oplus\operatorname{Im}\delta_0\\=&\ker d_0\cap\ker \delta_0\oplus\ker d_0\cap(\ker \delta_0)^\perp\oplus\operatorname{Im}\delta_0\\=&\ker\Box_0\oplus\ker d_0\cap\operatorname{Im}d_0\oplus\operatorname{Im}\delta_0=\ker\Box_0\oplus\operatorname{Im}d_0\oplus\operatorname{Im}\delta_0\,,
        \end{align*}
        where in the last equality we used $d_0^2=0$, and hence $\operatorname{Im}d_0\subseteq\ker d_0$.
    \end{proof}
We now describe the orthogonal projection onto $E_0^\bullet = \ker \Box_0$. This projection will subsequently be used in the definition of the Rumin differential $d_c$. We first observe that, since $(\ker d_0)^\perp = \operatorname{Im} \delta_0$, for every $p,k$, the restriction
\begin{align*}
d_0\big\vert_{\operatorname{Im}\delta_0\cap\Omega^{p,k-1-p}(G)}
\colon
\operatorname{Im}\delta_0\cap\Omega^{p,k-1-p}(G)
\xrightarrow{\ \cong\ }
\operatorname{Im}d_0\cap\Omega^{p,k-p}(G)
\end{align*}
is an isomorphism of $\mathcal{C}^\infty(G)$-modules. Hence, this restriction map is invertible and its inverse defined a partial inverse of $d_0$.
\begin{definition}[The partial inverse $d_0^{-1}$]\label{def: the partial inverse of d0}
    The partial inverse of $d_0$ is the $\C^\infty(G)$-linear map
\begin{align*}
d_0^{-1}\colon
\Omega^{p,k-p}(G)
&\longrightarrow
\Omega^{p,k-1-p}(G)\ ,\
d_0^{-1}
:=
\left(
d_0\big\vert_{\operatorname{Im}\delta_0}
\right)^{-1}
\circ\operatorname{pr}_{\operatorname{Im}d_0}\,,
\end{align*}
where $\operatorname{pr}_{\operatorname{Im}d_0}$ denotes the fibrewise orthogonal projection onto $\operatorname{Im}d_0$.
Thus, on $\operatorname{Im}d_0$, the operator $d_0^{-1}$ coincides with the inverse of $d_0\big|_{\operatorname{Im}\delta_0}$, while it vanishes on $(\operatorname{Im}d_0)^\perp$.
\end{definition}
As an immediate consequence of the definition, we also get that  $(d_0^{-1})^2=0$ and $\ker d_0^{-1}=\ker\delta_0=(\operatorname{Im}d_0)^\perp$.
\begin{definition}[Orthogonal projection onto $E_0^\bullet$]\label{def: Pi_0}
We define
\begin{align*}
\Pi_0
&:=
\operatorname{Id}
-d_0d_0^{-1}
-d_0^{-1}d_0
\colon
\Omega^\bullet(G)
\longrightarrow
\Omega^\bullet(G)\,.
\end{align*}

\end{definition}
\begin{lemma}
    The operator $\Pi_0$ defined above is the fibrewise orthogonal projection onto $E_0^\bullet$, i.e.  $\Pi_0 = \operatorname{pr}_{E_0}$. Moreover, it preserves both weight and degree:
$\Pi_0\bigl(\Omega^{p,k-p}(G)\bigr)
\subseteq
E_0^k\cap\Omega^{p,k-p}(G)$.
\end{lemma}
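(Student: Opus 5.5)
Everything in the statement is fibrewise linear algebra, so the plan is to check it pointwise on $\bigwedge^\bullet\mathfrak g^\ast$ and then extend $\mathcal C^\infty(G)$-linearly; $d_0$, $\delta_0$, $d_0^{-1}$ are all $\mathcal C^\infty(G)$-linear. The guiding tool is the Hodge decomposition \eqref{eq: Hodge decomp Box_0}, $\Omega^\bullet=\operatorname{Im}d_0\oplus\ker\Box_0\oplus\operatorname{Im}\delta_0$. We compute $d_0d_0^{-1}$ and $d_0^{-1}d_0$ on each summand separately, which yields the explicit form of $\Pi_0$ there.

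\textbf{On $\operatorname{Im}d_0$.} Let $\alpha=d_0\beta$, with $\beta\in\operatorname{Im}\delta_0$ chosen using the isomorphism $d_0|_{\operatorname{Im}\delta_0}$. Then $d_0^{-1}\alpha=\beta$, so $d_0d_0^{-1}\alpha=\alpha$. Since $d_0\alpha=0$, we also have $d_0^{-1}d_0\alpha=0$. Hence $\Pi_0\alpha=0$.

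\textbf{On $\ker\Box_0=\ker d_0\cap\ker\delta_0$.} Here $d_0\alpha=0$. Moreover $\alpha\perp\operatorname{Im}d_0$, so $\operatorname{pr}_{\operatorname{Im}d_0}\alpha=0$ and $d_0^{-1}\alpha=0$. Thus $\Pi_0\alpha=\alpha$.

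\textbf{On $\operatorname{Im}\delta_0$.} The vector $d_0\alpha$ already lies in $\operatorname{Im}d_0$, so the projection in the definition of $d_0^{-1}$ acts trivially on it. Therefore $d_0^{-1}d_0\alpha=(d_0|_{\operatorname{Im}\delta_0})^{-1}(d_0\alpha)=\alpha$, using $\alpha\in\operatorname{Im}\delta_0$. Also $\operatorname{Im}\delta_0\subseteq(\operatorname{Im}d_0)^\perp$: indeed $\langle d_0\beta,\delta_0\gamma\rangle=\langle d_0^2\beta,\gamma\rangle=0$. Hence $d_0^{-1}\alpha=0$, and so $\Pi_0\alpha=\alpha-0-\alpha=0$.

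Since the three summands are mutually orthogonal, $\Pi_0$ is the identity on $E_0^\bullet$ and vanishes on its orthogonal complement $\operatorname{Im}d_0\oplus\operatorname{Im}\delta_0$. That is exactly $\operatorname{pr}_{E_0}$.

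\textbf{Weight and degree.} Degree preservation is immediate, since $d_0$ raises degree by one and $d_0^{-1}$ lowers it by one. For weight, $d_0$ and $\delta_0$ preserve weight, and forms of different weight are orthogonal. Hence $\operatorname{pr}_{\operatorname{Im}d_0}$ maps $\Omega^{p,k-p}$ into itself: the spaces $\operatorname{Im}d_0$ and $(\operatorname{Im}d_0)^\perp$ both split by weight, because $d_0$ is weight-homogeneous and the weight decomposition is orthogonal. The inverse of the weight-preserving isomorphism $d_0|_{\operatorname{Im}\delta_0}$ also preserves weight, so $d_0^{-1}$ preserves weight, and therefore so does $\Pi_0$.

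The only mildly delicate point is this last step. One must make sure the restricted isomorphism respects the weight grading, which follows because it is a direct sum of its weight-$p$ pieces, as in the paper's displayed restriction to $\Omega^{p,k-1-p}$.
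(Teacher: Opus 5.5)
Your proof is correct and follows essentially the same route as the paper, which identifies $d_0d_0^{-1}=\operatorname{pr}_{\operatorname{Im}d_0}$ and $d_0^{-1}d_0=\operatorname{pr}_{\operatorname{Im}\delta_0}$ and concludes that $\Pi_0$ projects onto $(\operatorname{Im}d_0\oplus\operatorname{Im}\delta_0)^\perp=\ker d_0\cap\ker\delta_0$; your summand-by-summand evaluation on the Hodge decomposition verifies exactly these identities. Your explicit checks that $\operatorname{Im}\delta_0\perp\operatorname{Im}d_0$ and that $d_0^{-1}$ preserves weight spell out two points the paper leaves implicit.
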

\begin{proof}
    By Definition \ref{def: the partial inverse of d0}, 
$
d_0d_0^{-1}
=
\operatorname{pr}_{\operatorname{Im}d_0}$ and $
d_0^{-1}d_0
=
\operatorname{pr}_{\operatorname{Im}\delta_0}
$.
Thus, these operators are the fibrewise orthogonal projections onto $\operatorname{Im}d_0$ and $\operatorname{Im}\delta_0$, respectively. Moreover, it follows that $\Pi_0^2=(\operatorname{Id}-d_0d_0^{-1}-d_0^{-1}d_0)(\operatorname{Id}-d_0d_0^{-1}-d_0^{-1}d_0)=\operatorname{Id}-d_0d_0^{-1}-d_0^{-1}d_0=\Pi_0$, hence, $\Pi_0$ is the fibrewise orthogonal projection onto
$
\big(\operatorname{Im}d_0\oplus\operatorname{Im}\delta_0\big)^\perp$. Since $(\operatorname{Im}d_0)^\perp=\ker\delta_0$ and $(\operatorname{Im}\delta_0)^\perp=\ker d_0$,
we obtain $\operatorname{Im}\Pi_0
=
\ker d_0\cap\ker\delta_0
=
E_0^\bullet
$, and so $\Pi_0=\operatorname{pr}_{E_0}$. The second assertion follows directly from the fact that $d_0$ and $d_0^{-1}$ have bidegree $(0,1)$ and $(0,-1)$, respectively.
\end{proof}

We now turn to the Rumin differential $d_c$. To keep the exposition concise, we express $d_c$ directly in terms of the structure maps of the truncated multicomplex. The equivalence between the standard construction introduced by Rumin \cite{rumin_grenoble} and the formulation used here is proved in Lemma 2.19 of \cite{tripaldi2026spectralcomplexestruncatedmulticomplexes}; see also Lemma 3.7 of \cite{magnani2026stokestheorempositivelygraded}.

\begin{definition}[The operators $\partial_r$]\label{def: operators curly}
The operators $\partial_r\colon\Omega^k(G)\to\Omega^{k+1}(G)$ are defined recursively by
  \begin{align}\label{eq: operators curly}
        \partial_1=d_1\ \text{ and }\ \partial_r=d_r-\sum_{j=1}^{r-1}d_{r-j}d_0^{-1}\partial_j\ \text{ for }r\ge 2\,.
    \end{align}
\end{definition}

Since $d_0^{-1}$ has bidegree $(0,-1)$, it follows inductively that $\partial_r$ has bidegree  $\lvert\partial_r\rvert=(r,1-r)$, that is\begin{align}\label{eq: bidegree partial r}
        \text{ for any }\alpha\in\Omega^{p,k-p}(G)\ \text{ we have that }\ \partial_r\alpha\in\Omega^{p+r,k+1-p-r}(G)\,.
    \end{align}
Thus, $\partial_r$ increases the weight by $r$ and the degree by $1$.

\begin{proposition}[Expressing $d_c$ on $E_0^\bullet$]\label{prop: d_c on forms}
For every $\alpha\in E_0^\bullet$, the Rumin differential is given by
\begin{align}\label{eq: formulation of d_c}
d_c\alpha
&=
\Pi_0\sum_{r=1}^{Q}\partial_r\alpha
=
\sum_{r=1}^{Q}\partial_r\alpha
-d_0d_0^{-1}\sum_{r=1}^{Q}\partial_r\alpha
-d_0^{-1}d_0\sum_{r=1}^{Q}\partial_r\alpha\,.
\end{align}
\end{proposition}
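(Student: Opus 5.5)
The plan is to identify Rumin's lift $\Pi_E\alpha$ explicitly in terms of the operators $\partial_r$ of Definition~\ref{def: operators curly}, and then to apply $\Pi_0$. It suffices to work with $\alpha\in E_0^k\cap\Omega^{p,k-p}(G)$ of pure weight $p$, since both sides of \eqref{eq: formulation of d_c} are additive.

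I use Rumin's construction in the following form. $\Pi_E\alpha$ is the unique form $\beta$ such that $\beta-\alpha\in\operatorname{Im}d_0^{-1}$ and $d_0^{-1}d\beta=0$. Then $d_c\alpha=\Pi_0\,d\,\Pi_E\alpha$. Uniqueness holds because $d_0^{-1}d$ restricted to $\operatorname{Im}d_0^{-1}$ is invertible: it equals the identity plus a term that strictly raises weight, so it is unipotent. This is the step where one must check agreement with the cited equivalence (Lemma 2.19 of \cite{tripaldi2026spectralcomplexestruncatedmulticomplexes}), and I expect it to be the main obstacle.

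The candidate lift is
\[
\beta:=\alpha-\sum_{j=1}^{Q}d_0^{-1}\partial_j\alpha ,
\]
whose weight $p+j$ component for $j\ge1$ is $-d_0^{-1}\partial_j\alpha$, by \eqref{eq: bidegree partial r}. Clearly $\beta-\alpha\in\operatorname{Im}d_0^{-1}$.

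Next I compute $d\beta$ weight by weight, using Lemma~\ref{lem: decomposing d according to weight increase} and Lemma~\ref{lem: different weights}.

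\emph{Weight $p$.} The component is $d_0\alpha=0$, since $\alpha\in E_0$.

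\emph{Weight $p+r$, $r\ge1$.} The component is
\[
d_r\alpha-\sum_{j=1}^{r-1}d_{r-j}d_0^{-1}\partial_j\alpha-d_0d_0^{-1}\partial_r\alpha .
\]
By the recursion \eqref{eq: operators curly}, this equals $(\operatorname{Id}-d_0d_0^{-1})\partial_r\alpha$.

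Summing over $r$ gives
\[
d\beta=\sum_{r=1}^{Q}\bigl(\operatorname{Id}-d_0d_0^{-1}\bigr)\partial_r\alpha .
\]
The range $r\le Q$ suffices because weights of forms lie in $[0,Q]$. Since $d_0d_0^{-1}=\operatorname{pr}_{\operatorname{Im}d_0}$ and $\ker d_0^{-1}=(\operatorname{Im}d_0)^\perp$, we get $d_0^{-1}d\beta=0$. Hence $\beta=\Pi_E\alpha$.

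Finally, $\Pi_0 d_0=0$ because $\operatorname{Im}d_0\perp E_0$. Therefore
\[
d_c\alpha=\Pi_0 d\beta=\Pi_0\sum_{r=1}^{Q}\partial_r\alpha .
\]
The second expression in \eqref{eq: formulation of d_c} follows by expanding $\Pi_0=\operatorname{Id}-d_0d_0^{-1}-d_0^{-1}d_0$ from Definition~\ref{def: Pi_0}.
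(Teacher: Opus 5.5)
Your proof is correct. It differs from the paper mainly in that the paper does not prove this proposition internally. It adopts the $\partial_r$-formula as its working description of $d_c$ and refers to Lemma 2.19 of \cite{tripaldi2026spectralcomplexestruncatedmulticomplexes} (see also Lemma 3.7 of \cite{magnani2026stokestheorempositivelygraded}) for the equivalence with Rumin's construction.

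Your argument supplies that equivalence in a self-contained way. You guess the lift $\beta=\alpha-\sum_{j}d_0^{-1}\partial_j\alpha$. You then check weight by weight that the recursion of Definition~\ref{def: operators curly} gives $d\beta=\sum_r(\operatorname{Id}-d_0d_0^{-1})\partial_r\alpha\in\ker d_0^{-1}$. Finally you conclude by uniqueness, using that $d_0^{-1}d=\operatorname{Id}+d_0^{-1}(d-d_0)$ is injective on $\operatorname{Im}d_0^{-1}$.

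Compare this with deriving the formula from Rumin's explicit projection $\Pi_E=\operatorname{Id}-Pd_0^{-1}d-dPd_0^{-1}$, where $P$ is a finite Neumann series in $d_0^{-1}(d-d_0)$. Your route avoids expanding $P$ and regrouping by weight. It also gives more than the statement: an explicit formula for Rumin's lift on $E_0$, showing that $(\operatorname{Id}-d_0d_0^{-1})\partial_r\alpha$ is exactly the weight $p+r$ component of $d\Pi_E\alpha$. That explains where the recursion defining $\partial_r$ comes from. The paper's citation route, by contrast, buys brevity in Section~\ref{chapter truncated multicomplex}.

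The step you flag as the main obstacle is in fact immediate from Rumin's definitions $E=\ker d_0^{-1}\cap\ker(d_0^{-1}d)$ and $F=\operatorname{Im}d_0^{-1}+\operatorname{Im}(d\,d_0^{-1})$, with $\Pi_E$ the projection onto $E$ along $F$. Write $\Pi_E\alpha-\alpha=d_0^{-1}a+d\gamma$ with $\gamma\in\operatorname{Im}d_0^{-1}$, and apply $d_0^{-1}$. The relations $d_0^{-1}\alpha=0$ (as $\alpha\in\ker\delta_0$), $d_0^{-1}\Pi_E\alpha=0$ and $(d_0^{-1})^2=0$ give $d_0^{-1}d\gamma=0$. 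Hence $\gamma=0$ by the same injectivity, so $\Pi_E\alpha-\alpha\in\operatorname{Im}d_0^{-1}$. The condition $d_0^{-1}d\Pi_E\alpha=0$ holds because $\Pi_E\alpha\in E$. So your characterisation of $\Pi_E\alpha$ agrees with Rumin's, and $d_c\alpha=\Pi_0 d\Pi_E\alpha$ completes the argument as you wrote it.
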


Just as the space of Rumin forms decomposes according to weight, namely
\begin{align*}
E_0^k
=
\bigoplus_{p=0}^{Q}
\left(E_0^k\cap\Omega^{p,k-p}(G)\right),
\end{align*}
the Rumin differential admits a corresponding decomposition into components of different bidegree.

More precisely, fix $p$ and $k$ such that $E_0^k\cap\Omega^{p,k-p}(G)\neq\{0\}$.
Since not every space $E_0^{k+1}\cap\Omega^{p+j,k+1-p-j}(G)$ is necessarily nontrivial, let $I_{p,k}=\{\nu_1,\ldots,\nu_N\}\subseteq\mathbb N$, 
denote the set of admissible weight increments. Then the restriction of the Rumin differential to the weight-$p$ component of $E_0^k$ decomposes as
\begin{align}\label{eq: weight decomposition dc}
d_c
=
\sum_{j\in I_{p,k}}d_c^j
\colon
E_0^k\cap\Omega^{p,k-p}(G)
\longrightarrow
\bigoplus_{j\in I_{p,k}}
\left(
E_0^{k+1}\cap
\Omega^{p+j,k+1-p-j}(G)
\right),
\end{align}
where
\begin{align*}
d_c^j
:=
\Pi_0 \circ\partial_j\big|_{E_0^k\cap\Omega^{p,k-p}(G)}\ \text{ and it is not the trivial map for each }j\in I_{p,k}\,.
\end{align*}
Each $d_c^j$ is a differential operator that increases the weight by $j$ and the degree by $1$. In the terminology of multicomplexes, we say that $d_c^j$ has bidegree $|d_c^j|=(j,1-j)$.
More precisely, throughout the paper we use the notation \begin{align}\label{eq: definition Ipk} I_{p,k} := \left\{ j\geq 1: \Pi_0\partial_j \big|_{E_0^k\cap\Omega^{p,k-p}(G)} \neq 0 \right\}. \end{align} Thus, $I_{p,k}$ records precisely the nontrivial homogeneous components of the Rumin differential acting on Rumin forms of degree $k$ and weight $p$.

We now introduce the spectral complexes associated with the de Rham complex of $G$. Their construction combines two structures discussed above: the multicomplex structure of the de Rham complex and the decomposition of the Rumin differential according to the weight. Indeed, since $d =d_0 + d_1 + \cdots + d_s$ endows $\Omega^\bullet(G)$ with the structure of a $s$-multicomplex, its total complex carries a natural filtration by weight and hence gives rise to a spectral sequence \cite{livernet2020spectral,lerario2023multicomplexes}. We briefly recall the description of its pages and differentials that will be used throughout the paper. Following \cite{livernet2020spectral}, we first introduce the spaces of $r$-cycles and $r$-boundaries directly at the level of differential forms.

\begin{definition}[Definition 2.6 in \cite{livernet2020spectral}]\label{def: Z and B defined} Let $\alpha\in\Omega^{p,k-p}(G)$ and let $r\ge 1$. We define bigraded submodules $Z_r^{p,k-p}$ and $B_r^{p,k-p}$ of $\Omega^{p,k-p}$ as follows.
\begin{align*}
    \alpha\in Z_r^{p,k-p}\ \Longleftrightarrow&\ \text{for }1\le j\le r-1\,,\text{ there exists }z_{p+j}\in\Omega^{p+j,k-p-j}(G)\text{ such that}\\&\ d_0\alpha=0\text{ and }d_n\alpha=\sum_{i=0}^{n-1}d_iz_{p+n-i}\text{ for all }1\le n\le r-1\,.\\
    \alpha\in B_r^{p,k-p}\ \Longleftrightarrow&\ \text{for }0\le j\le r-1\text{ there exists }c_{p-j}\in\Omega^{p-j,k-1-p+j}(G)\text{ such that}\\&\ \alpha=\sum_{j=0}^{r-1}d_jc_{p-j}\text{ and }0=\sum_{j=l}^{r-1}d_{j-l}c_{p-j}\text{ for }1\le l\le r-1\,.
\end{align*}
\end{definition}

The $r$-th page of the spectral sequence is therefore represented by the bigraded quotient $E_r^{p,k-p} = Z_r^{p,k-p}/B_r^{p,k-p}$ and is endowed with a differential $\Delta_r$ of bidegree $(r,1-r)$. As usual, the quotients appearing at any given page are obtained by taking the cohomology of the preceding one, i.e. $E_{r+1}\cong H(E_r,\Delta_r)$. 

The differential $\Delta_r$ admits an explicit description in terms of the structure maps of the multicomplex. \begin{proposition}[Theorem 2.10 in \cite{livernet2020spectral}] \label{prop: Delta con diff} The $r$-th differential of the spectral sequence is given by \begin{align*} \Delta_r\colon Z_r^{p,k-p}/B_r^{p,k-p} &\longrightarrow Z_r^{p+r,k+1-p-r}/B_r^{p+r,k+1-p-r}, \\ \Delta_r([\alpha]) &= \left[ d_r\alpha - \sum_{i=1}^{r-1} d_i z_{p+r-i} \right], \end{align*} where $\alpha\in Z_r^{p,k-p}$ and the family $\{z_{p+j}\}_{1\leq j\leq r-1}$ satisfies the equations in Definition \ref{def: Z and B defined}. \end{proposition}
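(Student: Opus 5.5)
The plan is to reinterpret Definition~\ref{def: Z and B defined} in terms of the weight filtration $F^p\Omega^k:=\Omega^{\ge p,k}(G)$ of the total complex $(\Omega^\bullet(G),d)$. The differential $d$ preserves this filtration, since every $d_i$ with $i\ge 0$ raises weight by $i$. We then recover $\Delta_r$ as the classical differential $[x]\mapsto[dx]$ of the spectral sequence of a filtered complex, and read off the explicit formula by taking leading (lowest-weight) components. Below, $\pi_p$ denotes the projection onto the weight-$p$ component, which is available by Lemma~\ref{lem: different weights}.

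\emph{Step 1 (cycles).} Given $\alpha\in\Omega^{p,k-p}(G)$ and corrections $z_{p+j}\in\Omega^{p+j,k-p-j}(G)$, set
\[
x:=\alpha-\sum_{j=1}^{r-1}z_{p+j}\in F^p\Omega^k .
\]
We expand $dx=\sum_i d_i x$ by weight. The weight-$(p+n)$ component of $dx$ is
\[
d_n\alpha-\sum_{i=0}^{n-1}d_i z_{p+n-i}.
\]
Hence the conditions in Definition~\ref{def: Z and B defined} say exactly that $dx\in F^{p+r}$. Therefore $Z_r^{p,k-p}=\pi_p\{x\in F^p: dx\in F^{p+r}\}$. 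Moreover, the weight-$(p+r)$ component of $dx$ is precisely
\[
d_r\alpha-\sum_{i=1}^{r-1}d_i z_{p+r-i},
\]
because no $z_{p+r}$ occurs among the corrections.

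\emph{Step 2 (boundaries).} In the same way, put $y:=\sum_{j=0}^{r-1}c_{p-j}\in F^{p-r+1}$. The weight components of $dy$ below $p$ are the sums $\sum_{j\ge l}d_{j-l}c_{p-j}$ for $1\le l\le r-1$, and the component of weight $p$ is $\sum_j d_jc_{p-j}$. Hence
\[
B_r^{p,k-p}=\pi_p\{dy:\ y\in F^{p-r+1},\ dy\in F^{p}\}.
\]
This identifies $E_r=Z_r/B_r$ with the standard $r$-th page of the filtered complex, and in particular gives $B_r\subseteq Z_r$.

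\emph{Step 3 (the differential).} Define $\Delta_r[\alpha]:=[\pi_{p+r}(dx)]$, which is the displayed formula. Three checks are needed.
\begin{enumerate}
\item The class lands in $Z_r^{p+r}$. The form $x':=dx\in F^{p+r}$ satisfies $dx'=0$, so by Step~1, with $-(\text{higher components of }dx)$ as corrections, its leading part lies in $Z_r$, indeed in $Z_\infty$.
\item The class is independent of the corrections. If $x_1,x_2$ are two admissible choices, then $u=x_1-x_2\in F^{p+1}$ and $du\in F^{p+r}$. Therefore $\pi_{p+r}(du)$ is the leading part of $du$ with $u\in F^{(p+r)-r+1}$, so it lies in $B_r^{p+r}$ by Step~2.
\item The map vanishes on $B_r^{p}$. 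If $\alpha=\pi_p(dy)$ as in Step~2, we may take $x=dy-(\text{higher part})$, adjusted so that it is admissible. Then $dx=-d(\text{higher part of }dy)$, and its weight-$(p+r)$ component is again the leading part of $du$ for some $u\in F^{p+1}$, hence lies in $B_r^{p+r}$.
\end{enumerate}

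\emph{Step 4.} Finally, $\Delta_r^2=0$ and $E_{r+1}\cong H(E_r,\Delta_r)$ follow from the standard theory of filtered complexes once Steps~1--3 identify our data with theirs.

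The main obstacle is check~3 together with the boundary identification in Step~2. Livernet's $B_r$ uses exactly $r$ terms $c_{p-j}$ with vanishing lower components. One must verify that the higher-weight tail of $dy$ can be absorbed into admissible corrections $z$ without leaving the weight window $[p+1,p+r-1]$. I would handle this by truncating $dy$ at weight $p+r$ and using $d^2=0$ componentwise, exactly as in the proof that the de Rham complex is a truncated multicomplex.
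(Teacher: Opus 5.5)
Your argument is correct. The paper does not prove this statement itself; it imports it from \cite[Theorem 2.10]{livernet2020spectral}. Your route is the standard argument behind that result: you identify the zig-zag spaces with leading parts of the classical objects $\widetilde Z_r^p:=\{x\in F^p:\ dx\in F^{p+r}\}$ and $d\widetilde Z_{r-1}^{p-r+1}$ of the weight-filtered total complex, and then transport $[x]\mapsto[dx]$.

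Inside the paper, your Steps 1--2 reappear as Lemma~\ref{remark: characterising Z and B with d}, but they are used in the opposite direction. There, the $d$-description \eqref{eq:scrittura delta in termini di d} of $\Delta_r$ is deduced from the cited formula. You instead deduce the formula from the $d$-description together with the classical $E_r^p=\widetilde Z_r^p/(\widetilde Z_{r-1}^{p+1}+d\widetilde Z_{r-1}^{p-r+1})$. With your proof, that lemma no longer depends on the citation.

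One step you leave implicit in Step 2 deserves a line. The projection $\pi_p$ induces $E_r^p\cong\pi_p(\widetilde Z_r^p)/\pi_p(d\widetilde Z_{r-1}^{p-r+1})=Z_r^{p,k-p}/B_r^{p,k-p}$. This holds because $\widetilde Z_r^p\cap F^{p+1}=\widetilde Z_{r-1}^{p+1}$, and because $\pi_px=\pi_p(dy)$ implies $x-dy\in\widetilde Z_{r-1}^{p+1}$. Your checks (1)--(3) then show that, under this isomorphism, $[x]\mapsto[dx]$ becomes $[\alpha]\mapsto[\pi_{p+r}(dx)]$.
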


The ordinary spectral sequence only considers quotients in which the indices of the cycle and boundary spaces agree, namely $Z_r/B_r$. The construction of the spectral complexes is based on the observation that cycle and boundary spaces belonging to different pages can also be combined \cite[Lemma 4.1]{tripaldi2026spectralcomplexestruncatedmulticomplexes}.
Moreover, the differential associated with the $r$-th page admits suitable extensions to mixed quotients of the form
\begin{align*}\Delta_r\colon Z_r^{p,k-p}/B_m^{p,k-p} \longrightarrow Z_l^{p+r,k+1-p-r}/B_r^{p+r,k+1-p-r}, \end{align*}
Indeed, using a slight abuse of notation, one has
\begin{equation}\label{eq:image of Z_r by Delta_r}
    \Delta_r\bigl(Z_r^{p,k-p}\bigr) \subseteq B_{r+1}^{p+r,k+1-p-r}, 
\end{equation}
and 
\begin{equation}\label{eq:image of B_r by Delta_r}
    \Delta_r\bigl(B_m^{p,k-p}\bigr)  \subseteq B_r^{p+r,k+1-p-r}
\end{equation}
for every $m\geq 1$ (see proof of \cite[Proposition 4.2]{tripaldi2026spectralcomplexestruncatedmulticomplexes}). What we  mean by inclusions \eqref{eq:image of Z_r by Delta_r} and \eqref{eq:image of B_r by Delta_r} is that, given $\alpha\in Z_r^{p,k-p}$, for any choice of elements $z_{r+i}\in\Omega^{p+i,k-p-i}(G)$ that satisfy the equations in Definition \ref{def: Z and B defined}, we have that $d_r\alpha-\sum_{i=1}^{r-1}d_{i}z_{p+r-i}\in B_{r+1}^{p+r,k+1-p-r}$, and if furthermore $\alpha\in B_m^{p,k-p}$, then $d_r\alpha-\sum_{i=1}^{r-1}d_{i}z_{p+r-i}\in B_{r}^{p+r,k+1-p-r}$. Indeed, when we write
\begin{align}\label{eq: Delta_r on mixed quotients}
\Delta_r\colon E_{r,m}^{p,k-p} \longrightarrow E_{l,r}^{p+r,k+1-p-r}, 
\end{align}
we mean the map induced by 
\begin{equation}\label{eq: def representative-level map del Delta_r}
   \alpha
\longmapsto
d_r\alpha-\sum_{i=1}^{r-1}d_i z_{p+r-i}
\end{equation}
 followed by projection onto the quotient $ Z_l^{p+r,k+1-p-r}/B_r^{p+r,k+1-p-r}$.

We now use the Rumin complex to select, among all these possible mixed quotients and maps, those that assemble into the spectral complexes associated with the given truncated multicomplex. In particular, the collection of the indices $I_{p,k}$ defined in \eqref{eq: definition Ipk}, is used to determine which spectral-sequence differentials $\Delta_j$ are used to connect the corresponding quotients (see \cite[Proposition 3.8]{tripaldi2026spectralcomplexestruncatedmulticomplexes}). We use the following conventions to treat the cases of forms of degree 0 and top degree: $B_l^{0,0}=0$ and $Z_l^{Q,n-Q}=\Omega^n(G)$ for any $l\ge 1$.

\begin{definition}[Spectral complexes associated with $\Omega^\bullet(G)$] \label{def: spectral complexes}
Let $p,k$ be such that \[ E_0^k\cap\Omega^{p,k-p}(G)\neq\{0\}. \] For every pair $j,l\geq1$ satisfying $j\in I_{p,k}$ and $l\in I_{p-l,k-1}$, we define \begin{align}\label{eq: E_{j,l}} E_{j,l}^{p,k-p} := Z_j^{p,k-p}/B_l^{p,k-p}.
\end{align} 
At degree zero we set $B_l^{0,0}=0$ and we always have $j=1$ so that $E_{1,l}^{0,0}=Z_1^{0,0}=C^\infty(G)$, while at top degree $Z_j^{Q,n-Q}=\Omega^{n}(G)$ and $l=1$, so that $E_{j,1}^{Q,n-Q}=\Omega^n(G)$.

Moreover, if $i\in I_{p+j,k+1}$, then the corresponding differentials fit into a sequence 
\begin{align*} E_{l,m_1}^{p-l,k-1-p+l} \xrightarrow{\ \Delta_l\ } E_{j,l}^{p,k-p} \xrightarrow{\ \Delta_j\ } E_{i,j}^{p+j,k+1-p-j} \xrightarrow{\ \Delta_i\ } E_{m_2,i}^{p+j+i,k+2-p-j-i},
\end{align*} and satisfy
\[ \Delta_j\circ\Delta_l=0\ \text{ and } \ \Delta_i\circ\Delta_j=0\,. \] 
Here $m_1$ and $m_2$ denote any indices for which the corresponding neighbouring terms are nontrivial. Since this construction can be carried out in every degree and weight, it yields a family of bigraded spaces $E_{j,l}^{\bullet,\bullet}$ connected by the operators $\Delta_j$ of bidegree $(j,1-j)$. These spaces therefore form a collection of complexes, which we call the spectral complexes associated with the $s$-multicomplex $\bigl(\Omega^\bullet(G),d=d_0+d_1+\cdots+d_s\bigr)$. We denote this family by 
\begin{align*} \left\{ \left(E_{j,l}^{\bullet,\bullet},\Delta_j\right) \right\}_{j\in I_{\bullet,\bullet}}.
\end{align*} 
\end{definition}

\section{Pansu pullback of forms}\label{chapter pansu pullback}

On a Carnot group there are two distinct notions of differentiability: the classical ``Eulidean'' one arising from the smooth manifold structure, and an intrinsic notion determined by the group law together with the stratification of the Lie algebra. The latter, known as Pansu differentiability, is defined as follows.

\begin{definition}[Pansu derivative]\label{def: P deriv}
    Let $G_1$ and $G_2$ be Carnot groups, and denote by $\delta_{\lambda}$ the dilation of factor $\lambda>0$ in both groups. A map $\varphi:G_1 \rightarrow G_2$ is Pansu differentiable at a point $x \in G_1$ if there exists a group homomorphism $D_P\varphi(x): G_1 \rightarrow G_2$ such that 
    \begin{align}\label{Pansu derivative}
         \delta_{1/\lambda} \circ L^{-1}_{\varphi(x)} \circ \varphi \circ L_x \circ \delta_{\lambda}(\cdot) \xrightarrow[\lambda \rightarrow 0^+]{}   D_P\varphi(x)(\cdot),
    \end{align}
 with respect to the uniform convergence on compact sets.
    The map $D_P\varphi(x)$ is called the Pansu derivative of $\varphi$ at $x$.

    Whenever no ambiguity arises, we shall denote by the same symbol $D_P\varphi$ the map induced between the corresponding Lie algebras via the exponential identification.
    \end{definition}

As proved in \cite{Pansu-quasiconformal}, an analogue of Rademacher's theorem holds in this setting.

\begin{theorem}[Pansu-Rademacher Theorem]
    Let $\varphi : G_1 \rightarrow G_2$ be a Lipschitz map between Carnot groups, then $\varphi$ is Pansu differentiable at almost every point $x \in G_1$.
\end{theorem}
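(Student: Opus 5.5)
The plan is to follow Pansu's original strategy: reduce to horizontal curves, then identify every blow-up limit of $\varphi$ at a generic point with a single homomorphism. Throughout I use the Haar measure on $G_1$, which is Lebesgue measure in exponential coordinates. I use that left translations are isometries and that $d(\delta_\lambda x,\delta_\lambda y)=\lambda d(x,y)$. Consequently, the rescaled maps
\[
\varphi_{x,\lambda}:=\delta_{1/\lambda}\circ L_{\varphi(x)}^{-1}\circ\varphi\circ L_x\circ\delta_\lambda
\]
are all $L$-Lipschitz, where $L$ is the Lipschitz constant of $\varphi$, and they fix the identity. By Arzelà--Ascoli, every sequence $\lambda_k\to0^+$ has a subsequence along which $\varphi_{x,\lambda_k}$ converges uniformly on compact sets. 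It therefore suffices to show that, for a.e.\ $x$, every such limit equals one fixed group homomorphism.

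\textbf{Step 1: Lipschitz curves.} Let $\gamma$ be a Lipschitz curve in $G_2$. Composing with the projection $\pi:G_2\to V_1^{(2)}$ onto the first layer in exponential coordinates gives a Euclidean Lipschitz curve. By the classical Rademacher theorem, $\pi\circ\gamma$ is differentiable a.e. Lipschitz curves are horizontal, so $\gamma$ is recovered from $\pi\circ\gamma$ by the horizontal lift ODE. A standard BCH estimate then gives, at a.e.\ $t$,
\[
\delta_{1/h}\bigl(\gamma(t)^{-1}\gamma(t+h)\bigr)\to\exp\bigl((\pi\circ\gamma)'(t)\bigr).
\]
The mechanism is that the higher-layer components of $\gamma(t)^{-1}\gamma(t+h)$ are iterated integrals of $(\pi\circ\gamma)'-(\pi\circ\gamma)'(t)$, which are $o(h^j)$ at Lebesgue points.

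\textbf{Step 2: directional derivatives on a full-measure set.} Fix a countable dense set $\{X_i\}\subset V_1^{(1)}$. For each $i$, foliate $G_1$ by the horizontal lines $t\mapsto y\exp(tX_i)$. Step 1 applies on each leaf, and Fubini then gives a full-measure set on which the derivative $D_{X_i}\varphi(y)\in V_1^{(2)}$ exists for every $i$. Each $y\mapsto D_{X_i}\varphi(y)$ is bounded and measurable. Restricting further to points $x$ that are Lebesgue points of all these functions, with respect to the Carnot--Carathéodory balls, still leaves a full-measure set.

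\textbf{Step 3: identifying blow-up limits (the main obstacle).} Let $\psi$ be a blow-up limit at such an $x$. The aim is to show that for every $y\in G_1$, every $i$ and every $t$,
\[
\psi(y\exp(tX_i))=\psi(y)\exp\bigl(t\,D_{X_i}\varphi(x)\bigr).
\]
Knowing this only along lines through the origin is easy, but it must hold for all $y$, and that is the delicate part.

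The approach is as follows. By Step 1 and Fubini, for a.e.\ $y'$ near $\delta_{\lambda}$-rescaled neighbourhoods of $x$, the increment of $\varphi_{x,\lambda}$ along $y'\exp(sX_i)$ equals the integral of the rescaled derivative $D_{X_i}\varphi(x\delta_\lambda(\cdot))$. The Lebesgue-point property says this derivative is $L^1$-close to $D_{X_i}\varphi(x)$ on the rescaled balls. This yields the displayed identity for a.e.\ $y$, up to errors that vanish as $\lambda\to0$. The equi-Lipschitz bound lets me pass to the uniform limit and then to all $y$ by continuity.

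By density of $\{X_i\}$ and the Lipschitz bound, the identity extends to all $X\in V_1^{(1)}$ with a map $A:V_1^{(1)}\to V_1^{(2)}$. This $A$ is linear, because compositions of horizontal moves approximate the move along $X+Y$ with error $O(t^2)$ and $\psi$ is Lipschitz. Since $V_1^{(1)}$ generates $\mathfrak g_1$, every element of $G_1$ is a finite product of horizontal exponentials, so the identity forces
\[
\psi(\exp Y_1\cdots\exp Y_m)=\exp(AY_1)\cdots\exp(AY_m).
\]
In particular, $\psi$ is well defined and multiplicative, hence a group homomorphism determined by $A$. Since $A$ depends only on $x$, all subsequential limits coincide, and the full convergence in \eqref{Pansu derivative} follows.
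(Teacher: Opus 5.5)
The paper does not prove this theorem; it cites Pansu~\cite{Pansu-quasiconformal}. Your outline (a.e.\ differentiability of Lipschitz curves, directional derivatives via Fubini along the lines $y\exp(tX_i)$, Lebesgue points for Carnot--Carath\'eodory balls, and identifying blow-ups through $\psi(y\exp(tX_i))=\psi(y)\exp(tAX_i)$ for every $y$) is essentially Pansu's original argument and is sound as an outline.

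One slip concerns the linearity of $A$. The elements $\exp(tX)\exp(tY)$ and $\exp(t(X+Y))$ differ by $\tfrac{t^2}{2}[X,Y]$ in the second layer, so their Carnot--Carath\'eodory distance is of order $t$, not $t^2$, and your justification fails as written. The step is not needed, though: multiplicativity and continuity of $\psi$ already make it a homomorphism, so $A$ is automatically linear. Alternatively, the Trotter formula $(\exp(tX/n)\exp(tY/n))^n\to\exp(t(X+Y))$ gives linearity directly.
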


Let us recall some fundamental properties of the Pansu derivative.

Let $G_1$ and $G_2$ be Lie groups with stratified Lie algebras $\mathfrak{g}_1 = V_1^1 \oplus \cdots \oplus V_{s_1}^1$ and $\mathfrak{g}_2 = V_1^2 \oplus \cdots \oplus V_{s_2}^2$ and denote by $n_1$ and $n_2$ their (topological) dimension. 
Given $\varphi : G_1 \rightarrow G_2$ a Pansu differentiable map at $x\in G_1$, the first property is that the Pansu derivative  $D_P\varphi(x):G_1 \rightarrow G_2$ is a strata-preserving homomorphism. In particular, it commutes with dilations:
    \begin{equation*}
        D_P\varphi(x)(\delta_{\lambda} v) = \delta_{\lambda} D_P\varphi(x)(v) \quad \text{for all } \lambda>0\,.
    \end{equation*}
Equivalently, the induced map $D_P\varphi(x) : \mathfrak{g}_1 \rightarrow \mathfrak{g}_2$ between the Lie algebras preserves the stratification. Therefore, in coordinates, with respect to two bases adapted to the stratification, $D_P\varphi(x)$ is represented by a block diagonal matrix
    \begin{align*}
        D_P\varphi(x)=\begin{bmatrix}
            A_1^1 & 0 & \cdots  & 0\\
            0 & A_2^2 & \cdots  & 0\\
            \vdots &  & \ddots & \vdots\\
            0 & \cdots & 0 & A_{s_1}^{s_2}\\
            \end{bmatrix}\,,
    \end{align*}
    where for every $i, j$, the block $A_j^i$ represents a linear map from $V_i^1$ to $V_j^2$. Here the maps $A_i^j\colon V_i^1\to V_j^2$ with $A_{i}^j=0$ when $i\neq j$ are represented in matrix form as rectangular blocks.
    
The second important remark is that the mere existence of the limit in \eqref{Pansu derivative} does not automatically guarantee that the limit map is a group homomorphism from $G_1$ to $G_2$. However, if the map $\varphi$ is  Euclidean $\mathcal{C}^1$ on an open set, preservation of the horizontal distribution throughout that open set implied continuous Pansu differentiability. Conversely, Pansu differentiability at every point implies the contact condition (see \cite{Pansu-quasiconformal} and \cite[Theorem 1.1]{Magnani_2013}).
\medskip

From now on, we assume that $\varphi : G_1 \to G_2$ is a $\mathcal{C}^1$ Pansu differentiable map. In this framework both the classical differential $d(\varphi)_{\cdot}$ and the Pansu derivative $D_P\varphi(\cdot)$ are well defined at every point. Our goal is to clarify the relation between these two notions and to derive an explicit coordinate representation of the Pansu derivative.

\begin{remark}
As a consequence of \cite[Theorem 1.1]{Magnani_2013}, every $\mathcal{C}^1$ contact map is Pansu differentiable. Here, a map is said to be contact if its classical differential maps the horizontal subbundle into the horizontal subbundle; see \cite[Definition 2.9]{Magnani_2013}. Therefore, all the results of this section apply, in particular, to $\mathcal{C}^1$ contact maps.
\end{remark}

For $h=1,2$, fix a basis $\{X_1^h, \cdots, X_{n_h}^h\}$ of the Lie algebra $\mathfrak{g}_h$ adapted to the stratification. Identify $\mathfrak{g}_h$ and $\mathbb{R}^{n_h}$ via the isomorphism 
\begin{align*}
    \mathbb{R}^{n_h} &\longleftrightarrow \mathfrak{g}_h\\
    (x_1, \ldots, x_{n_h}) &\longmapsto \sum_{j = 1}^{n_h} x_j X_j^h.
\end{align*}
Through the exponential map, this identification induces global coordinates on the group $G_h$, namely the exponential coordinates of the first kind:
\begin{equation}\label{identification algebra group}
    \begin{aligned}
    \mathbb{R}^{n_h}\quad &\longrightarrow \quad G_h\\
    (x_1, \ldots, x_{n_h}) &\longmapsto \exp \big ( \sum_{j=1}^{n_h} x_j X_j^h\big ).
\end{aligned}
\end{equation}
Let $\phi := (\phi_1, \ldots, \phi_{n_2}) : \mathbb{R}^{n_1} \rightarrow \mathbb{R}^{n_2}$ denote the coordinate representation of $\varphi$ with respect to the exponential coordinates \eqref{identification algebra group}, that is 
\begin{equation*}
    \phi = \exp^{-1} \circ \varphi \circ \exp.
\end{equation*}
Recall that, in such coordinates, the group law on $G_h$ is given by the Baker-Campbell-Hausdorff formula. For all $x,y \in \mathbb R^{n_h}$,
\begin{equation*}
    x*y = \log \Big ( \exp \big ( \sum_{j=1}^{n_h} x_j X_j^h\big ) \exp \big ( \sum_{j=1}^{n_h} y_j X_j^h\big ) \Big ),
\end{equation*}
which defines a polynomial group law on $\mathbb R^{n_h}$.

Fix $x \in \mathbb R^{n_1}$. The Pansu derivative of $\phi$ at $x$, applied to $y \in \mathbb R^{n_1}$, is given by
\begin{equation*}
     D_P\phi(x) (y) =\lim_{\lambda \rightarrow 0^+} \delta_{1/{\lambda}}\circ L_{-\phi(x)} \circ \phi \circ L_x \circ \delta_{\lambda}(y),
 \end{equation*}
where $L_\bullet$ denotes the left multiplication with respect to the group law $*$ on $\mathbb R^{n_h}$, and the $\delta_\lambda$ are the induced homogeneous dilations on $\mathbb{R}^{n_h}$ (we use the same notation on $\mathbb R^{n_1}$ and $\mathbb R^{n_2}$ for simplicity).\\
Set $\widetilde{\phi}:= L_{-\phi(x)} \circ \phi \circ L_x$. 
We have the following commutative diagram
\begin{equation}\label{commutative diagram charts}
    \begin{tikzcd}[ampersand replacement=\&,cramped]
	{\mathbb{R}^{n_1}} \& {\mathbb{R}^{n_1}} \& {\mathbb{R}^{n_2}} \& {\mathbb{R}^{n_2}} \\
	{G_1} \& {G_1} \& {G_2} \& {G_2}
	\arrow["{L_x}", from=1-1, to=1-2]
	\arrow["\exp", from=1-1, to=2-1]
	\arrow["\phi", from=1-2, to=1-3]
	\arrow["\exp", from=1-2, to=2-2]
	\arrow["{L_{-\phi(x)}}", from=1-3, to=1-4]
	\arrow["\exp", from=1-3, to=2-3]
	\arrow["\exp", from=1-4, to=2-4]
	\arrow["{L_{\exp(x)}}"', from=2-1, to=2-2]
	\arrow["\varphi"', from=2-2, to=2-3]
	\arrow["{L_{\exp(-\phi(x))}}"', from=2-3, to=2-4]
\end{tikzcd}
\end{equation}

The map $\widetilde{\phi}$ can be interpreted as the representation of $\varphi$ with respect to the coordinates
\begin{equation*}
    \mathbb{R}^{n_1} \xrightarrow[]{L_x} \mathbb{R}^{n_1}\cong \mathfrak{g}_1 \xrightarrow[]{\exp} G_1 ,
\end{equation*}
and 
\begin{equation*}
    \mathbb{R}^{n_2} \xrightarrow[]{L_{\phi(x)}} \mathbb{R}^{n_2} \cong \mathfrak{g}_2 \xrightarrow[]{\exp} G_2.
\end{equation*}
The classic differential of the map $\varphi$ at the point $\exp(x)$ with respect to these coordinates is represented by $d(\widetilde{\phi})_0$. Differentiating the diagram in \eqref{commutative diagram charts} gives
\[\begin{tikzcd}[ampersand replacement=\&,cramped]
	{T_0\mathbb{R}^{n_1}} \&\& {T_x\mathbb{R}^{n_1}} \&\& {T_{\phi(x)}\mathbb{R}^{n_2}} \&\&\& {T_0\mathbb{R}^{n_2}} \\
	{T_{1_{G_1}}G_1} \&\& {T_{\exp(x)}G_1} \&\& {T_{\exp{\phi(x)}}G_2} \&\&\& {T_{1_{G_2}}G_2}
	\arrow["{d(L_x)_0}", from=1-1, to=1-3]
	\arrow["{d(\exp)_0}"', from=1-1, to=2-1]
	\arrow["{d(\phi)_x}", from=1-3, to=1-5]
	\arrow["{d(\exp)_x}", from=1-3, to=2-3]
	\arrow["{d(L_{-\phi(x)})_{\phi(x)}}", from=1-5, to=1-8]
	\arrow["{d(\exp)_{\phi(x)}}", from=1-5, to=2-5]
	\arrow["{d(\exp)_0}", from=1-8, to=2-8]
	\arrow["{d(L_{\exp(x)})_{1_{G_1}}}"', from=2-1, to=2-3]
	\arrow["{d(\varphi)_{\exp(x)}}"', from=2-3, to=2-5]
	\arrow["{d(L_{\exp(-\phi(x))})_{\exp(\phi(x))}}"', from=2-5, to=2-8]
\end{tikzcd}\]
In particular, 
\begin{equation*}
 d(\widetilde{\phi})_{0} = d(L_{-\phi(x)})_{\phi(x)} \circ d(\phi)_x \circ d(L_x)_0.
\end{equation*}
Let us compute $d(\widetilde{\phi})_{0}$ explicitly. To do so, recall that the fixed basis $\{X_1^h, \dots, X_{n_h}^h\}$ of $\mathfrak{g_h}$ induces left-invariant vector fields on $\mathbb{R}^{n_h}$ via the formula
\begin{equation*}
    x \mapsto d(L_x)_0 e_j\ , \quad j = 1, \dots, n_i
\end{equation*}
where $\{e_j\}$ is the standard basis of $\mathbb{R}^{n_h}$ (see \cite{LeDonneTripaldi2021}). We keep the notation $X_1^h, \dots, X_{n_h}^h$ for these vector fields.
The differential $(d\phi)_x$ is represented, with respect to the standard basis of $\mathbb R^{n_1}$ and $\mathbb R^{n_2}$, by the Jacobian matrix
\begin{equation*}
    J\phi(x) = \Big ( \partial_j \phi_i(x) \Big)_{ \substack{ i=1, \dots, n_2 \\ j=1, \dots, n_1 } }.
\end{equation*}
 Hence the matrix associated with $(d\phi)_x \circ (dL_x)_0: T_0\mathbb{R}^{n_1} \rightarrow T_{\phi(x)}\mathbb{R}^{n_2}$ is  
\begin{equation*}
    \left( (X_j^1\phi_i)(x) \right)_{\substack{i=1,\cdots,n_2\\ j = 1, \cdots,n_1}}.
\end{equation*} 
It remains to compute the differential $d(L_{-\phi(x)})_{\phi(x)}$.
Identifying $\mathfrak g_2$ with $\mathbb R^{n_2}$ via the fixed basis, and using exponential coordinates, one has
\[\begin{tikzcd}[ampersand replacement=\&,cramped]
	{G_2} \& {G_2} \\
	{\mathbb{R}^{n_2}} \& {\mathbb{R}^{n_2}.}
	\arrow["{L_{\exp(-y)}}", from=1-1, to=1-2]
	\arrow["{\exp^{-1}}", from=1-2, to=2-2]
	\arrow["\exp", from=2-1, to=1-1]
	\arrow["{L_{-y}}"', from=2-1, to=2-2]
\end{tikzcd}\]
Differentiating at $y$ gives
\begin{align*}
    d(L_{-y})_y &= d(\exp^{-1} \circ L_{\exp(-y)} \circ \exp)_y    = d(\exp^{-1})_{1_{G_2}} \circ d(L_{\exp(-y)})_{\exp(y)} \circ d(\exp)_y \\
    &= d(L_{\exp(-y)})_{\exp(y)} \circ d(\exp)_y\,.
\end{align*}
Using the standard formula for the differential of the exponential map (see \cite[Theorem 1.7, Chapter II]{Helgason2001}), one obtains
\begin{equation*}
    d(L_{-y})_y = \frac{1 - e^{-\text{ad}(y)}}{\text{ad}(y)} = \sum_{m=0}^{\infty}\frac{(-1)^m}{(m+1)!}(\text{ad}(y))^m\ \text{ where }\text{ad}(y)(z) = [y,z]\,.
\end{equation*}
 Therefore, the Jacobian matrix associated with
 \begin{align*}
     d(\widetilde{\phi})_0: \mathbb{R}^{n_1} = T_0\mathbb{R}^{n_1} \rightarrow T_0\mathbb{R}^{n_2} = \mathbb{R}^{n_2}
 \end{align*}
 has entries
\begin{equation}\label{formula esplicita differenziale classico}
    a_{i,j} = \Pi_{i} \Big ( \sum_{m=0}^{\infty}\frac{(-1)^m}{(m+1)!}(\text{ad}(\phi(x)))^m ( \sum_{k=1}^{n_2} (X_j^1\phi_k)(x) e_k )\Big)
\end{equation}
where $\Pi_i : \mathbb{R}^{n_2}  \to \mathbb{R}$ denotes the projection onto the 
$i$-th coordinate.

On the other hand, 

\begin{align*}
    a_{i,j} = \Pi_i(d(\widetilde{\phi})_0(e_j)) = \Pi_i(d(L_{-\phi(x)} \circ \phi \circ L_x)_0(e_j)),
\end{align*}
and
\begin{align*}
    d(L_{-\phi(x)} \circ \phi \circ L_x)_0(e_j) &= \lim_{\lambda\rightarrow 0} \frac{1}{\lambda} \left ( L_{-\phi(x)} \circ \phi \circ L_x(\lambda e_j) - L_{-\phi(x)} \circ \phi \circ L_x (0)  \right)\\ 
    &=  \lim_{\lambda\rightarrow 0} \frac{1}{\lambda} \left ( L_{-\phi(x)} \circ \phi \circ L_x(\lambda e_j) \right).
\end{align*}

Let $l_j^h \in \mathbb{N}$ such that $\delta_{\lambda}(X_j^h) = \lambda^{l_j^h}X_j^h$. Then the $(i,j)$-th entry of the matrix associated with the Pansu derivative
$D_P\phi(x)$ (with respect to the fixed stratified bases) is given by
\begin{align*}
\Pi_i\bigl(D_P\phi(x)(e_j)\bigr)
&=
\lim_{\lambda\to0^+}
\frac{1}{\lambda^{l_i^2}}
\Pi_i\Bigl(
(L_{-\phi(x)}\circ\phi\circ L_x)
(\lambda^{l_j^1}e_j)
\Bigr)\\
&=
\lim_{\lambda\to0^+}
\frac{1}{\lambda^{l_i^2-l_j^1}}
\cdot
\frac{1}{\lambda^{l_j^1}}
\Pi_i\Bigl(
(L_{-\phi(x)}\circ\phi\circ L_x)
(\lambda^{l_j^1}e_j)
\Bigr)\\
&=
\lim_{\lambda\to0^+}
\frac{1}{\lambda^{l_i^2-l_j^1}}
\left[
\Pi_i\left(
\sum_{m=0}^{\infty}
\frac{(-1)^m}{(m+1)!}
(\operatorname{ad}(\phi(x)))^m
\left(
\sum_{k=1}^{n_2}
(X_j^1\phi_k)(x)e_k
\right)
\right)
+o(1)
\right]\,.
\end{align*}

 Therefore, we distinguish the following three cases.
 \begin{itemize}
     \item[When] $l_j^1 = l_i^2$.\\
     In this case the scaling factor is $\lambda^0$, hence the $(i,j)$-th entry of the Pansu derivative coincides with the corresponding entry of the standard differential in left-translated coordinates:
     \begin{align*}
        \Pi_i( D_P\phi(x) (X_j^1)) =  \Pi_{i} \Big ( \sum_{m=0}^{\infty}\frac{(-1)^m}{(m+1)!}(\text{ad}(\phi(x)))^m ( \sum_{k=1}^{n_2} (X_j^1(x)\phi_k)(x) e_k ) \Big).
    \end{align*}
    This occurs precisely when $X_j^1$ and $X_i^2$ belong to the same layer of their respective stratifications. These entries form the diagonal blocks of the matrix of $D_P\phi(x)$.
     \item[When] $l_j^1 < l_i^2$.\\ 
     Here $l_i^2-l_j^1>0$, so the factor $\lambda^{l_j^1 -l_i^2}$ diverges for $\lambda \rightarrow 0^+$. 
     Since both the standard and the Pansu differentials exist, the only possibility is that
    \begin{equation}\label{contact equation in charts}
        \Pi_{i} \Big ( \sum_{m=0}^{\infty}\frac{(-1)^m}{(m+1)!}(\text{ad}(\phi(x)))^m ( \sum_{k=1}^{n_2} (X_j^1\phi_k)(x) e_k ) \Big) = 0\,.
    \end{equation}
    Moreover, since the Pansu derivative preserves the stratification, $D_P\phi(x)(e_j)\in V_{l_j}^2$, and since $l_i^2>l_j^1$, the $i$-th coordinate must vanish, i.e.
    \begin{align*}
        \Pi_i( D_P\phi(x) (e_j)) = 0\,.
    \end{align*}
    These entries correspond to the blocks below the diagonal.
     \item[When] $l_j^1 > l_i^2$.\\ 
     In this case $l_i^2-l_j^1<0$, hence again
    \begin{align*}
        \Pi_i( D_P\phi(x) (e_j)) = 0\,.
    \end{align*}
    In this case, the calculation imposes no vanishing
condition on the coefficient $a_{i,j}$. These entries correspond to the blocks above the diagonal blocks.
 \end{itemize}

\begin{remark}
The system of first-order partial differential equations in 
\eqref{contact equation in charts}, obtained by letting $X_j^1$ vary in the first layer of the stratification of $V_1$, coincides with the contact equations introduced by Magnani in \cite[Theorem 1.1]{Magnani_2013}.
Moreover, as a consequence of \cite[Theorem 1.1]{Magnani_2013}, the full system obtained by letting $X_j^1$ vary in a basis of $\mathfrak g_1$ is equivalent to the subsystem corresponding only to the vectors in the first layer.
\end{remark}
This discussion can be summarised by the following proposition.
\begin{proposition}[The classical differential of a Pansu differentiable map]
\label{prop:triangularity-classical-differential}
Let \(G_1\) and \(G_2\) be Carnot groups with stratified Lie algebras
\[
\mathfrak g_1=V_1^1\oplus\cdots\oplus V_{s_1}^1
\ \text{ and }\ 
\mathfrak g_2=V_1^2\oplus\cdots\oplus V_{s_2}^2\,.
\]
Let $\phi\colon\mathbb R^{n_1}\longrightarrow\mathbb R^{n_2}$
be the expression in exponential coordinates of a Euclidean
\(\mathcal C^1\) map
\(\varphi:G_1\to G_2\) and assume \(\varphi\) is Pansu
differentiable at $\exp(x)\in G_1$.

Set
\[
\widetilde\phi
:=
L_{-\phi(x)}\circ\phi\circ L_x
\]
and let
\[
a_{i,j}
:=
\Pi_i\bigl(d(\widetilde\phi)_0(e_j)\bigr),
\]
where \(e_j\) belongs to the $l_j^1$-th layer of $\mathfrak{g}_1$, while
the \(i\)-th coordinate in the target belongs to the $l_i^2$-th layer.

Then
\[
a_{i,j}=0
\qquad\text{whenever}\qquad
l_i^2>l_j^1\,.
\]
Moreover,
\[
\Pi_i\bigl(D_P\phi(x)(e_j)\bigr)
=
\begin{cases}
a_{i,j}\ , & l_i^2=l_j^1\,,\\[2mm]
0\ ,       & l_i^2\neq l_j^1\,.
\end{cases}
\]
Equivalently, the classical differential in left-translated
coordinates satisfies
\[
d(\widetilde\phi)_0(V_q^1)
\subseteq
\bigoplus_{p\leq q}V_p^2\ \text{ for every }q=1,\ldots,s_1\,,
\] and
\[
D_P\phi(x)\big|_{V_q^1}
=
\operatorname{pr}_{V_q^2}
\circ d(\widetilde\phi)_0\big|_{V_q^1}.
\]
Thus, with respect to the bases adapted to the stratifications,
the matrix representing \(d(\widetilde\phi)_0\) is block upper
triangular, and the Pansu derivative \(D_P\phi(x)\) is given
precisely by the block diagonal part.
\end{proposition}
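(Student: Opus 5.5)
The plan is to work with the map $\Psi_\lambda:=\delta_{1/\lambda}\circ\widetilde\phi\circ\delta_\lambda$ in exponential coordinates and to compare two ways of computing its behaviour near $0$. Since $\widetilde\phi=L_{-\phi(x)}\circ\phi\circ L_x$ is $\mathcal C^1$ with $\widetilde\phi(0)=0$, Taylor's theorem gives $\widetilde\phi(v)=d(\widetilde\phi)_0v+o(|v|)$. Applying this at $v=\delta_\lambda(te_j)=t\lambda^{l_j^1}e_j$ and reading the $i$-th coordinate after $\delta_{1/\lambda}$ gives
\[
\Pi_i\bigl(\Psi_\lambda(te_j)\bigr)=\lambda^{l_j^1-l_i^2}\bigl(t\,a_{i,j}+o(1)\bigr)\qquad(\lambda\to0^+)\,,
\]
where $o(1)$ is uniform for $t$ in a compact set. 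Here we use that $\delta_\lambda$ is linear and diagonal in exponential coordinates of the first kind, since $\delta_\lambda$ is a Lie algebra automorphism. This is exactly the computation already displayed before the statement, and I will make the error term precise. On the other hand, by Definition~\ref{def: P deriv} together with the diagram \eqref{commutative diagram charts}, $\Psi_\lambda(te_j)\to D_P\phi(x)(te_j)$ as $\lambda\to0^+$, and the limit is finite.

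The three cases then follow from this identity. If $l_i^2>l_j^1$, the prefactor $\lambda^{l_j^1-l_i^2}$ blows up while the left side converges. Hence $t\,a_{i,j}+o(1)\to0$, and taking $t=1$ gives $a_{i,j}=0$. If $l_i^2=l_j^1$, the prefactor is $1$, so $\Pi_i(D_P\phi(x)(e_j))=a_{i,j}$, after dividing by $t$ or setting $t=1$. If $l_i^2<l_j^1$, the prefactor tends to $0$ and the left side tends to $0$, so $\Pi_i(D_P\phi(x)(e_j))=0$. This matches the statement that $D_P\varphi(x)$ preserves strata, which is recalled before the statement but is not even needed here.

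The equivalent formulations follow by linearity. The vanishing $a_{i,j}=0$ for $l_i^2>l_j^1$ says that $d(\widetilde\phi)_0(V_q^1)\subseteq\bigoplus_{p\le q}V_p^2$, that is, the matrix is block upper triangular with respect to adapted bases. The formula for the entries of $D_P\phi(x)$ says that $D_P\phi(x)|_{V_q^1}=\operatorname{pr}_{V_q^2}\circ d(\widetilde\phi)_0|_{V_q^1}$, since the orthogonal projection in adapted coordinates simply keeps the coordinates of layer $q$.

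The main obstacle is the uniformity of the Taylor remainder in the first step. It must be $o(|v|)$ with $|v|=t\lambda^{l_j^1}$, so after multiplying by $\lambda^{-l_i^2}$ the remainder is $\lambda^{l_j^1-l_i^2}\,o(1)$ and not something larger. This requires only that $\widetilde\phi$ be differentiable at $0$, which follows from $\phi\in\mathcal C^1$ and the smoothness of the polynomial translations. No uniform convergence of the Pansu limit beyond pointwise convergence along the ray $te_j$ is used.
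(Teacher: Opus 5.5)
Your strategy is the paper's: expand $\widetilde\phi$ to first order at $0$, evaluate at $\delta_\lambda e_j=\lambda^{l_j^1}e_j$, compare coordinatewise with the Pansu limit, and split into three cases. The cases $l_i^2=l_j^1$ and $l_i^2<l_j^1$ are correct. Your proof that $a_{i,j}=0$ when $l_i^2>l_j^1$ is also fine, and it needs only that the Pansu limit is finite, whereas the paper uses that it is zero.

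\textbf{The gap.} In the case $l_i^2>l_j^1$ you never prove $\Pi_i\bigl(D_P\phi(x)(e_j)\bigr)=0$. The statement asserts this for all $l_i^2\neq l_j^1$. The reformulation $D_P\phi(x)|_{V_q^1}=\operatorname{pr}_{V_q^2}\circ d(\widetilde\phi)_0|_{V_q^1}$ also needs it, since it presupposes $D_P\phi(x)(V_q^1)\subseteq V_q^2$. Your identity cannot supply this. Once $a_{i,j}=0$, it only gives $\Pi_i\bigl(\Psi_\lambda(e_j)\bigr)=\lambda^{l_j^1-l_i^2}\,o(1)$, which is indeterminate. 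For example, take $l_j^1=1$, $l_i^2=2$ and a coordinate $\Pi_i\widetilde\phi(se_j)=cs^2$. This is compatible with $\mathcal C^1$ regularity and with $a_{i,j}=0$, and the limit along $te_j$ exists and equals $ct^2$. Only the requirement that $D_P\phi(x)$ be a homomorphism, and hence linear in $t$, rules out $c\neq0$.

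\textbf{The fix.} The missing ingredient is exactly the property you call unnecessary. By definition $D_P\phi(x)$ is a group homomorphism, hence linear in exponential coordinates of the first kind. As a limit of the rescalings $\delta_{1/\lambda}\circ\widetilde\phi\circ\delta_\lambda$, it also commutes with dilations. A linear map commuting with the diagonal maps $\delta_\mu$ preserves their eigenspaces, so $D_P\phi(x)(V_q^1)\subseteq V_q^2$. The paper records this at the start of its proof, and it settles both off-diagonal cases for the Pansu entries at once. Add this step and drop the claim that strata preservation is not needed; the rest of your argument then goes through as written.
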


\begin{proof}
Since $\widetilde\phi(0)=0$ and we are assuming \(\widetilde\phi\) is Euclidean differentiable at \(0\), for every
\(e_j\) we have
\[
\widetilde\phi(\lambda^{l_j^1}e_j)
=
\lambda^{l_j^1}d(\widetilde\phi)_0(e_j)
+
o(\lambda^{l_j^1})
\ \text{ as }\ \lambda\to0^+.
\]
Taking the \(i\)-th coordinate gives
\begin{equation}
\label{eq:euclidean-expansion-coordinate}
\Pi_i\bigl(\widetilde\phi(\lambda^{l_j^1}e_j)\bigr)
=
\lambda^{l_j^1}a_{i,j}
+
o(\lambda^{l_j^1})\,.
\end{equation}
On the other hand, by Pansu differentiability,
\[
\delta_{1/\lambda}\circ\widetilde\phi\circ\delta_\lambda(e_j)
\longrightarrow
D_P\phi(x)(e_j)\ 
\text{ as }\ \lambda\to0^+.
\]
Since \(e_j\) has weight \(l_j^1\), we have $\delta_\lambda(e_j)=\lambda^{l_j^1}e_j$.
Moreover, the dilation of \(G_2\) acts on the \(i\)-th coordinate by
multiplication by \(\lambda^{l_i^2}\). Consequently,
\begin{equation}
\label{eq:pansu-component-limit}
\lambda^{-l_i^2}
\Pi_i\bigl(\widetilde\phi(\lambda^{l_j^1}e_j)\bigr)
\longrightarrow
\Pi_i\bigl(D_P\phi(x)(e_j)\bigr)\,.
\end{equation}
The Pansu derivative commutes with the homogeneous dilations and
therefore preserves the stratifications, that is $D_P\phi(x)(V_q^1)\subseteq V_q^2$.
It follows that
\begin{equation}
\label{eq:off-diagonal-pansu}
\Pi_i\bigl(D_P\phi(x)(e_j)\bigr)=0
\ \text{ whenever }\ 
l_i^2\neq l_j^1\,.
\end{equation}
One should then distinguish the following three cases.

Suppose first that \(l_i^2>l_j^1\). By
\eqref{eq:pansu-component-limit} and
\eqref{eq:off-diagonal-pansu},
\[
\Pi_i\bigl(\widetilde\phi(\lambda^{l_j^1}e_j)\bigr)
=
o(\lambda^{l_i^2})\,.
\]
Combining this with
\eqref{eq:euclidean-expansion-coordinate}, we obtain $\lambda^{l_j^1}a_{i,j}+o(\lambda^{l_j^1})
=
o(\lambda^{l_i^2})$, so when
dividing by \(\lambda^{l_j^1}\) we get
\[
a_{i,j}+o(1)
=
o\bigl(\lambda^{l_i^2-l_j^1}\bigr)\,.
\]
Since \(l_i^2-l_j^1>0\), letting \(\lambda\to0^+\) yields $a_{i,j}=0$.

Suppose next that \(l_i^2=l_j^1\). Dividing
\eqref{eq:euclidean-expansion-coordinate} by \(\lambda^{l_i^2}\), we find $
\lambda^{-l_i^2}
\Pi_i\bigl(\widetilde\phi(\lambda^{l_j^1}e_j)\bigr)
=
a_{i,j}+o(1)$.
Comparing this with \eqref{eq:pansu-component-limit} gives
\[
\Pi_i\bigl(D_P\phi(x)(e_j)\bigr)=a_{i,j}\,.
\]

Finally, if \(l_i^2<l_j^1\), then
\eqref{eq:euclidean-expansion-coordinate} gives $\lambda^{-l_i^2}
\Pi_i\bigl(\widetilde\phi(\lambda^{l_j^1}e_j)\bigr)
=
\lambda^{l_j^1-l_i^2}\bigl(a_{i,j}+o(1)\bigr)$.
Since \(l_j^1-l_i^2>0\), the right-hand side converges to zero.
Therefore,
\[
\Pi_i\bigl(D_P\phi(x)(e_j)\bigr)=0\,.
\]
Notice that, unlike the case when $l_i^2>l_j^1$, no condition is imposed on \(a_{i,j}\).

We have therefore proved that $a_{i,j}=0$ whenever $l_i^2>l_j^1$, and that the Pansu derivative retains precisely the entries for which $l_i^2=l_j^1$. This is equivalent to
\[
d(\widetilde\phi)_0(V_q^1)
\subseteq
\bigoplus_{p\leq q}V_p^2
\ \text{ and }\ 
D_P\phi(x)\big|_{V_q^1}
=
\operatorname{pr}_{V_q^2}
\circ d(\widetilde\phi)_0\big|_{V_q^1}\,.
\]
\end{proof}

\medskip
In view of the relation between the classical differential and the Pansu derivative established above, it is natural to ask whether, for $\mathcal{C}^1$  Pansu differentiable maps, the two differentials actually coincide.
The answer is in general negative. We present below an explicit example.
\begin{example}\label{example comparison Pansu and classical differential}
    Let $\mathfrak{h}^1 \times \mathbb{R}$ denote the direct product of the first Heisenberg Lie algebra $\mathfrak{h}^1$, generated by $\{X_1,X_2,T\}$ with non-trivial Lie bracket $[X_1,X_2]=T$, and the one-dimensional abelian Lie algebra spanned by $X_3$. In particular, the only non-vanishing bracket relation is $[X_1,X_2]=T$. Hence $\mathfrak{h}^1 \times \mathbb{R}$ is a two-step stratified Lie algebra with first layer $V_1 = \mathrm{span}\{X_1,X_2,X_3\}$ and second layer $V_2 = \mathrm{span}\{T\}.$
    Let $\mathbb{H}^1 \times \mathbb{R}$ be the connected, simply connected four-dimensional Lie group associated with $\mathfrak{h}^1 \times \mathbb{R}$. We use exponential coordinates of the first kind with respect to the ordered basis $(X_1,X_2,X_3,T)$ to identify $\mathbb{H}^1 \times \mathbb{R}$ with $\mathbb{R}^4$. We keep the same notation $(X_1,X_2,X_3,T)$ for the left invariant vector fields induced in $\mathbb{R}^4$ (see \cite{LeDonneTripaldi2021} for the explicit formulae).\\
    Define the map $\varphi : \mathbb{H}^1 \times \mathbb{R} \to \mathbb{H}^1 \times \mathbb{R}$ by
\begin{equation*}
\varphi(x_1,x_2,x_3,t)
=
\Big(e^{t}x_1,\; e^{t}x_2,\; x_3,\; \frac{1}{2} e^{2t}\Big).
\end{equation*}
The map $\varphi$ is smooth and Pansu differentiable at every point. Indeed, writing $\varphi=(\varphi_1,\varphi_2,\varphi_3,\varphi_4)$, one verifies easily that the contact equations
   \begin{equation*}
\begin{cases}
    X_1\varphi_4 + \frac{1}{2}(\varphi_2 X_1\varphi_1 - \varphi_1 X_1\varphi_2) = 0 \\
    X_2\varphi_4 + \frac{1}{2}(\varphi_2 X_2\varphi_1 - \varphi_1 X_2\varphi_2) = 0\\
    X_3\varphi_4 + \frac{1}{2}(\varphi_2 X_3\varphi_1 - \varphi_1 X_3\varphi_2) = 0
\end{cases}.
\end{equation*}
are satisfied (see \cite[Theorem 1.1]{Magnani_2013}).

According to formula \eqref{formula esplicita differenziale classico}, the entries $(i,4)$ for $i=1,2,3$ of the classical differential with respect to the basis $(X_1(x), X_2(x), X_3(x), T(x))$ and $(X_1(\varphi(x)), X_2(\varphi(x)), X_3(\varphi(x)), T(\varphi(x)))$ are given by
\begin{equation*}
    a_{i,4} = T\varphi_i.
\end{equation*}
A direct computation shows that outside the plane $(0,0,x_3,t)$, we have $(a_{1,4},a_{2,4},a_{3,4}) \not= (0,0,0)$. On the other side, the corresponding entries of $D_P\varphi(x)$ are zero for all $x \in \mathbb{H}^1 \times \mathbb{R}$.
\end{example}

\medskip
We now come to a key object in our analysis, the Pansu pullback. Even though all the considerations done so far regarding Pansu differentiability only require a map $\varphi$ which is Euclidean $\C^1$, when considering the pullback of smooth forms, we will always assume Euclidean smoothness.

The Pansu derivative $D_P\varphi(x)$ can be regarded either as a homogeneous group homomorphism between $G_1$ and $G_2$, or, via the exponential identification,as a graded Lie algebra homomorphism $D_P\varphi(x) : \mathfrak{g}_1 \rightarrow \mathfrak{g}_2$.
This algebraic interpretation allows us to transport smooth forms from $G_2$ to $G_1$ as follows (see also \cite[Definition 3.14]{kleiner2020pansu}). 

\begin{definition}[Pansu pullback]\label{definition pansu pullback}
Let $\alpha = \sum_j f_j \,\xi_j$ be a $k$-form on $G_2$, where $f_j \in \mathcal{C}^{\infty}(G_2)$ for all $j$, and $\xi_j$ are left-invariant $k$-forms. The Pansu pullback of $\alpha$ by $\varphi$ is the $k$-form on $G_1$ defined by
\begin{equation*}
    \varphi_P^*\alpha(x)(X_1, \cdots, X_k) = \sum_{j} (f_j \circ \varphi )(x)  \xi_j (D_P\varphi(x) X_1, \cdots, D_P\varphi(x) X_k)
\end{equation*}
for every $x \in G_1$ and $X_1, \cdots, X_k \in \mathfrak{g}_1$.
\end{definition}
This definition mimics the classical pullback of differential forms, namely
\begin{equation*}
    \varphi^*\alpha(x)(X_1, \cdots, X_k) = \sum_{j} (f_j \circ \varphi )(x) \xi_j ((d\varphi)_x X_1(x), \cdots, (d\varphi)_x X_k(x))
\end{equation*}
with the classical differential $(d\varphi)_x$ being replaced by the Pansu derivative $D_P\varphi$.

Although the two definitions are formally analogous, their behaviour
with respect to weights is different. The classical pullback need not
preserve each individual weight space: it may produce components of
strictly larger weight. Nevertheless, it cannot decrease weight and
therefore preserves the filtration by weight. The Pansu pullback, on
the other hand, preserves each weight space and coincides precisely
with the weight-preserving part of the classical pullback.

\begin{proposition}[Weight behaviour of the classical and Pansu pullbacks]
\label{prop:weight-behaviour-pullbacks}
Let \(G_1\) and \(G_2\) be Carnot groups, and let $\varphi:G_1\longrightarrow G_2$
be a Euclidean smooth map which is Pansu differentiable at
every point.
Then,
\begin{equation*}
    \varphi^* \big(\Omega^{\geq p, k}(G_2)\big) \subset \Omega^{\geq p, k} (G_1) \quad \text{and} \quad \varphi_P^*\bigl(\Omega^{p,k-p}(G_2)\bigr)
\subseteq
\Omega^{p,k-p}(G_1).
\end{equation*}
Moreover, for each $h=1,2$, let
\[
\pi_p^h:\Omega^k(G_h)\longrightarrow\Omega^{p,k-p}(G_h)
\]
denote the projection onto the component of weight \(p\). Then, for every
\(\alpha\in\Omega^{p,k-p}(G_2)\), one has $$\varphi_P^*\alpha
=
\pi_p^1\bigl(\varphi^*\alpha\bigr)\,.$$ 
Therefore, the Pansu pullback of a form of pure weight \(p\) is precisely the
weight-\(p\) component of its classical pullback
\[
\varphi_P^*
=
\sum_p\pi_p^1\circ\varphi^*\circ\pi_p^2\,.
\]
In this sense, the Pansu pullback is the weight-preserving part of the
classical pullback.
\end{proposition}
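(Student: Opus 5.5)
The plan is to reduce everything to a pointwise statement about the linear map between Lie algebras and to use Proposition~\ref{prop:triangularity-classical-differential}. The starting point is to express the classical pullback in the left-invariant frames. For $\alpha=\sum_j f_j\xi_j$ with $\xi_j\in\bigwedge^{p,k-p}\mathfrak g_2^\ast$, evaluating $\varphi^*\alpha$ at $x\in G_1$ on left-invariant vector fields $X_1,\dots,X_k\in\mathfrak g_1$ gives
\[
(\varphi^*\alpha)(x)(X_1,\dots,X_k)=\sum_j f_j(\varphi(x))\,\xi_j\bigl(A_xX_1,\dots,A_xX_k\bigr),
\]
where $A_x:=(dL_{\varphi(x)^{-1}})_{\varphi(x)}\circ (d\varphi)_x\circ (dL_x)_e\colon\mathfrak g_1\to\mathfrak g_2$. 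In exponential coordinates this map is exactly $d(\widetilde\phi)_0$, by the commutative diagrams preceding Proposition~\ref{prop:triangularity-classical-differential}. The identification uses that $d(\exp)_0=\mathrm{Id}$ and that left-invariant fields correspond under $\exp$. Consequently $\varphi^*\alpha=\sum_j (f_j\circ\varphi)\,(A_x^*\xi_j)$ and $\varphi_P^*\alpha=\sum_j(f_j\circ\varphi)\,(D_P\varphi(x)^*\xi_j)$, with the coefficient functions still weightless.

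The claims therefore become fibrewise linear-algebra statements about $A_x^*$ and $D_P\varphi(x)^*$ acting on $\bigwedge^k\mathfrak g_2^\ast$. Proposition~\ref{prop:triangularity-classical-differential} decomposes $A_x=D_P\varphi(x)+N_x$, where $D_P\varphi(x)$ preserves layers and $N_x(V_q^1)\subseteq\bigoplus_{p<q}V_p^2$. Dualising, a covector $\theta\in (V_p^2)^\ast$ satisfies
\[
A_x^*\theta=D_P\varphi(x)^*\theta+N_x^*\theta,\qquad D_P\varphi(x)^*\theta\in (V_p^1)^\ast,\qquad N_x^*\theta\in\bigoplus_{q>p}(V_q^1)^\ast .
\]
Indeed $N_x^*\theta$ vanishes on $V_q^1$ unless $N_x(V_q^1)$ can meet $V_p^2$, which requires $q>p$.

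The next step is to pass to higher degree, where pullback is an algebra homomorphism. Take a basis monomial $\xi=\theta_{i_1}\wedge\cdots\wedge\theta_{i_k}$ of weight $p=\sum w(\theta_{i_l})$. Then
\[
A_x^*\xi=\bigwedge_l\bigl(D_P^*\theta_{i_l}+N_x^*\theta_{i_l}\bigr).
\]
Expanding this product, the term using only $D_P^*$ factors is $D_P\varphi(x)^*\xi$, of weight exactly $p$, since weights add under the wedge product. Every other term contains at least one $N_x^*$ factor, so its weight is strictly greater than $p$. This yields both $\varphi_P^*\bigl(\Omega^{p,k-p}\bigr)\subseteq\Omega^{p,k-p}$ and $\pi_p^1\varphi^*\alpha=\varphi_P^*\alpha$, using that weight components are uniquely determined (Lemma~\ref{lem: different weights}). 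Summing over weights $\ge p$ gives the filtration statement $\varphi^*\bigl(\Omega^{\ge p,k}\bigr)\subseteq\Omega^{\ge p,k}$.

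For the final formula, decompose an arbitrary $\alpha=\sum_p\pi_p^2\alpha$ and apply $\varphi_P^*$ linearly. Since $\varphi_P^*\pi_p^2\alpha=\pi_p^1\varphi^*\pi_p^2\alpha$, this gives $\varphi_P^*=\sum_p\pi_p^1\circ\varphi^*\circ\pi_p^2$.

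The main obstacle is the first step: rigorously identifying the matrix of $d\varphi$ in the left-invariant frames with $d(\widetilde\phi)_0$. This needs care with the identification of $T_eG$ and $\mathfrak g$ through $\exp$, and with the left-invariant fields in coordinates. I would handle it using $d(\exp)_0=\mathrm{Id}$ and $L_{\exp y}\circ\exp=\exp\circ L_y$. The remaining steps are bookkeeping on weights.
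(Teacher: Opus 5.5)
Your proposal is correct and follows essentially the same route as the paper: both use Proposition~\ref{prop:triangularity-classical-differential} to show that $A_x^*$ maps $(V_p^2)^\ast$ into $\bigoplus_{q\ge p}(V_q^1)^\ast$ with weight-$p$ part $D_P\varphi(x)^*$. Both then pass to wedge products of basis covectors and sum over weights. Your splitting $A_x=D_P\varphi(x)+N_x$ with a binomial expansion of the wedge is only a rephrasing of the paper's step of selecting the weight-$p_j$ component in each factor. The one addition is that you explicitly justify identifying the classical pullback in left-invariant frames with $d(\widetilde\phi)_0$, via $d(\exp)_0=\mathrm{Id}$ and $L_{\exp y}\circ\exp=\exp\circ L_y$, which the paper takes for granted.
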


\begin{proof}
Fix \(x\in G_1\). Using left translations, write
\[
A_x:=d(\widetilde\phi_x)_0:
\mathfrak g_1\longrightarrow\mathfrak g_2
\]
for the classical differential in left-translated coordinates, where $\widetilde\phi_x
:=
L_{-\phi(x)}\circ\phi\circ L_x$.
By Proposition~\ref{prop:triangularity-classical-differential}, for
every \(q\) one has
\[
A_x(V_q^1)\subseteq\bigoplus_{r\leq q}V_r^2\
\text{ and }
\
D_P\varphi(x)\big|_{V_q^1}
=
\operatorname{pr}_{V_q^2}
\circ A_x\big|_{V_q^1}.
\]

We first consider the induced maps on covectors. Let $\theta\in(V_p^2)^\ast$, then if \(v\in V_q^1\) and \(q<p\), then
\[
A_xv\in\bigoplus_{r\leq q}V_r^2
\subseteq
\bigoplus_{r<p}V_r^2\ \text{ which implies that }\ 
(A_x^*\theta)(v)=\theta(A_xv)=0\,.
\]
In other words \(A_x^*\theta\) has no components of weight strictly
smaller than \(p\), and hence $A_x^*\theta
\in
\bigoplus_{q\geq p}(V_q^1)^*$.
Furthermore, the component of \(A_x^*\theta\) of weight \(p\) is
exactly \(D_P\varphi(x)^*\theta\). Indeed, if \(v\in V_p^1\), then
\[
(D_P\varphi(x)^*\theta)(v)
=
\theta(D_P\varphi(x)v)
=
\theta\bigl(\operatorname{pr}_{V_p^2}(A_xv)\bigr)
=
\theta(A_xv)
=
(A_x^*\theta)(v)\,.
\]
On the other hand, \(D_P\varphi(x)^*\theta\) vanishes on \(V_q^1\) whenever
\(q\neq p\), because \(D_P\varphi(x)\) preserves the layers. Consequently,
\begin{equation}
\label{eq:dual-diagonal-part}
D_P\varphi(x)^*\theta
=
\pi_p^1(A_x^*\theta)\,.
\end{equation}

Now let $\xi=\theta_1\wedge\cdots\wedge\theta_k\in\bigwedge^k\mathfrak{g}_2^\ast$
be a homogeneous \(k\)-covector
where $\theta_j\in(V_{p_j}^2)^*$, so that $w(\xi)=
p=p_1+\cdots+p_k$.
For every \(j\), the covector \(A_x^*\theta_j\) has only components of
weight greater than or equal to \(p_j\), and so
\[
A_x^*\xi
=
A_x^*\theta_1\wedge\cdots\wedge A_x^*\theta_k\in\bigoplus_{ r\ge p}\bigwedge\nolimits^{r,k-r}\mathfrak{g}_1^\ast\,.
\]

Its component of weight exactly \(p\) can only be obtained by taking,
in each term \(A_x^*\theta_j\), the component of weight \(p_j\).
Using \eqref{eq:dual-diagonal-part}, we obtain
\[
\pi_p^1(A_x^*\xi)
=
D_P\varphi(x)^*\theta_1\wedge\cdots\wedge D_P\varphi(x)^*\theta_k
=
D_P\varphi(x)^*\xi\,.
\]

By linearity, the same conclusion holds for every homogeneous
\(k\)-covector \(\xi\) of weight \(p\). Finally, multiplication by the
coefficient functions of a differential form does not affect its
weight, and so for every
\(\alpha\in\Omega^{p,k-p}(G_2)=\C^\infty(G_2)\otimes\bigwedge^{p,k-p}\mathfrak{g}_2^\ast\),
\[
\varphi^*\alpha
\in
\bigoplus_{r\geq p}\Omega^{r,k-r}(G_1)\ \text{ and }\
\pi_p^1(\varphi^*\alpha)
=
\varphi_P^*\alpha\,.
\]

The filtration-preserving property follows immediately by summing
over all weights \(r\geq p\). Finally, applying the preceding identity
to every homogeneous component
\(\alpha_p=\pi_p^2\alpha\) gives
\[
\varphi_P^*\alpha
=
\sum_p\pi_p^1\bigl(\varphi^*\alpha_p\bigr)\,.
\]
\end{proof}

\medskip
A second fundamental difference concerns the interaction with the exterior derivative. The classical pullback satisfies the naturality property of the exterior derivative, that is
\begin{equation*}
    d\varphi^*\alpha = \varphi^*d\alpha
\end{equation*}
for every smooth differential form $\alpha$. In contrast, the Pansu pullback does not, in general, commute with the exterior derivative.

\begin{example}\label{example not commutativity}
    Let $\mathbb{H}^1$ be the $3$-dimensional Heisenberg group and denote by $\mathfrak{h}^1 = V_1 \oplus V_2$ its stratified Lie algebra. Let $\{X_1, X_2\}$ be a basis of the first layer $V_1$ and set $T:= [X_1, X_2]$. Denote by $\{\theta_1, \theta_2, \tau\}$ the corresponding dual left-invariant $1$-forms.
    
    Let $\varphi : \mathbb{H}^1 \rightarrow \mathbb{H}^1$ be a $\mathcal{C}^1$ Pansu differentiable map. Identifying $\mathbb{H}^1$ with its Lie algebra via the exponential map, we write $\varphi = (\varphi_1, \varphi_2, \varphi_3)$. 
    Since $\varphi$ is Pansu differentiable, its components satisfy the system of PDEs
    \begin{equation*}
\begin{cases}
    X_1\varphi_3 + \frac{1}{2}(\varphi_2 X_1\varphi_1 - \varphi_1 X_1\varphi_2) = 0 \\
    X_2\varphi_3 + \frac{1}{2}(\varphi_2 X_2\varphi_1 - \varphi_1 X_2\varphi_2) = 0
\end{cases}.
\end{equation*}
    The Pansu derivative  has the form
\begin{align*}
D_P\varphi=\begin{bmatrix}
            X_1\varphi_1 & X_2\varphi_1 & 0\\
        X_1\varphi_2 & X_2\varphi_2 & 0 \\ 0 & 0 & T\varphi_3 + \frac{1}{2}(\varphi_2 T\varphi_1 - \varphi_1 T\varphi_2)\end{bmatrix}\,.
    \end{align*}
Let $g\in \Omega^0(G)$ be a $0$-form. Then
    \begin{align*}
        &\varphi_P^* dg = \varphi_P^* (X_1g \theta_1 + X_2g \theta_2 + Tg \tau) =\\ 
        &=(X_1g \circ \varphi)(X_1\varphi_1 \theta_1 + X_2\varphi_1 \theta_2) + (X_2g \circ \varphi) (X_1\varphi_2 \theta_1 + X_2\varphi_2 \theta_2) + (Tg \circ \varphi) \left(T\varphi_3 + \frac{1}{2}(\varphi_2 T\varphi_1 - \varphi_1 T\varphi_2)\right)\tau
    \end{align*}
    On the other hand, 
    \begin{equation*}
        d\varphi_P^* g = X_1(g \circ \varphi) \,\theta_1 + X_2(g \circ \varphi) \,\theta_2 + T(g \circ \varphi)\, \tau
    \end{equation*}
    To compute derivatives of compositions along the vector fields $X_i$, we use the chain rule
    \begin{align*}
    X_i(g\circ f)(x)=\sum_{j=1}^3\partial_jg(f(x)) X_if_j(x)\,.
\end{align*}
Using the contact equations, we obtain for $i=1,2$
\begin{align*}
    X_i(g \circ \varphi)(x) &= \partial_1g(\varphi(x)) X_i\varphi_1(x)+\partial_2g(\varphi(x)) X_i\varphi_2(x)+\partial_3g(\varphi(x)) X_i\varphi_3(x)\\ 
    &= \partial_1g(\varphi(x))X_i\varphi_1(x)+\partial_2g(\varphi(x)) X_i\varphi_2(x) + \frac{1}{2}\partial_3g(\varphi(x))\left ( \varphi_1 X_i\varphi_2 - \varphi_2 X_i\varphi_1 \right )\\
    &= X_1g(\varphi(x))X_i\varphi_1(x)+X_2g(\varphi(x)) X_i\varphi_2(x).
\end{align*}
In contrast
\begin{align*}
    T(g \circ \varphi)(x) &= \partial_1g(\varphi(x)) T\varphi_1(x)+\partial_2g(\varphi(x)) T\varphi_2(x)+\partial_3g(\varphi(x)) T\varphi_3(x)\\ 
    &= X_1g(\varphi(x))T\varphi_1(x)+X_2g(\varphi(x)) T\varphi_2(x) + Tg(\varphi(x))\left( T\varphi_3 + \frac{1}{2}(\varphi_2 T\varphi_1 - \varphi_1 T\varphi_2) \right).
\end{align*}
Therefore,
\begin{equation*}
      d \varphi_P^* g - \varphi_P^* dg = \left(X_1g(\varphi(x))T\varphi_1(x)+X_2g(\varphi(x)) T\varphi_2(x)\right) \tau.
\end{equation*}
In particular, for suitable choices of $g$ and $\varphi$, the difference $d \varphi_P^* g - \varphi_P^* dg $ does not vanish in general.
\end{example}

Nevertheless, a result due to Kleiner, Müller, and Xie establishes a weak form of commutativity between the exterior derivative and the Pansu pullback. In \cite{kleiner2020pansu}, the authors prove a distributional commutation formula under suitable regularity assumptions. In order to state their result, we need to introduce the weight of a non-homogeneous form (as opposed to the one given in Definition \ref{def: weights of forms}).
\begin{definition}
    If a $k$-form $\alpha$ is not homogeneous with respect to weight, but instead decomposes as a sum of homogeneous components of different weights, we adopt the same convention as in \cite{kleiner2020pansu}. Namely,
     \begin{align*}
         \text{ if }\alpha=\sum_{j\in J}\alpha_j \text{ where }\alpha_j\in\Omega^{j,k-j}(G)\ \text{ then }w(\alpha)=\min J\,.
     \end{align*}
     In other words, the weight of $\alpha$ is defined as the minimal weight among its nonvanishing homogeneous components.
\end{definition}

\begin{theorem}[Theorem 4.2 in \cite{kleiner2020pansu}]\label{theorem KMX}
    Let $U_1 \subset G_1$ and $U_2 \subset G_2$ be open sets, and let $Q_1$ denote the homogeneous dimension of $G_1$. Suppose that $\varphi : U_1 \rightarrow U_2$ is a $W_{\text{loc}}^{1,q}$-map for some $q>Q_1$. Let $\alpha \in \Omega^k(U_2)$ be a continuous $k$-form with continuous distributional exterior derivative $d\alpha$ and let $\eta \in \Omega^{n_1 - k -1}(U_1)$ be a smooth compactly supported form. Assume that
    \begin{equation}\label{condizione pesi KMX}
       \min \left\{ w(\alpha) + w(d\eta), w(d\alpha) + w(\eta) \right\} \geq Q_1.
    \end{equation}
    Then the following identity holds:
    \begin{equation}
        \int_{U_1} \varphi^*_P d\alpha \wedge \eta + (-1)^{k} \int_{U_1}\varphi^*_P\alpha \wedge d\eta = 0.
    \end{equation}
\end{theorem}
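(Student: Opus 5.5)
The plan is to prove the identity first for Euclidean smooth contact maps, where it reduces to classical naturality of $d$ combined with a weight count. The general $W^{1,q}_{\mathrm{loc}}$ case would then follow by approximation.

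\emph{Step 1: smooth contact maps.} Assume first that $\varphi$ is Euclidean smooth and contact, and that $\alpha$ is smooth; continuous $\alpha$ with continuous $d\alpha$ is handled afterwards by mollifying $\alpha$. Classical naturality gives $d\varphi^*\alpha=\varphi^*d\alpha$. Since $\eta$ has compact support, Stokes' theorem yields
\[
\int_{U_1}\varphi^*d\alpha\wedge\eta+(-1)^k\int_{U_1}\varphi^*\alpha\wedge d\eta=0 .
\]
It then suffices to replace $\varphi^*$ by $\varphi_P^*$ in both integrals. Decompose $\alpha=\sum_{r\geq w(\alpha)}\alpha_r$ and $d\eta=\sum_{s\geq w(d\eta)}(d\eta)_s$. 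By Proposition~\ref{prop:weight-behaviour-pullbacks}:
\begin{itemize}
\item each $\varphi^*\alpha_r$ has components only in weights $r'\geq r$;
\item the weight-$r$ component of $\varphi^*\alpha_r$ is exactly $\varphi_P^*\alpha_r$.
\end{itemize}
A top-degree form on $G_1$ has pure weight $Q_1$, so by Lemma~\ref{lem: different weights} the only surviving products $(\varphi^*\alpha_r)_{r'}\wedge(d\eta)_s$ are those with $r'+s=Q_1$. Since $r'\geq r\geq w(\alpha)$, $s\geq w(d\eta)$ and $w(\alpha)+w(d\eta)\geq Q_1$ by \eqref{condizione pesi KMX}, we get $Q_1=r'+s\geq r+s\geq Q_1$. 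This forces $r'=r$, so only the Pansu component contributes. Hence $\int\varphi^*\alpha\wedge d\eta=\int\varphi_P^*\alpha\wedge d\eta$. The same count, using $w(d\alpha)+w(\eta)\geq Q_1$, gives $\int\varphi^*d\alpha\wedge\eta=\int\varphi_P^*d\alpha\wedge\eta$, which proves the theorem in this case.

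\emph{Step 2: Sobolev maps.} For $\varphi\in W^{1,q}_{\mathrm{loc}}$ with $q>Q_1$, I would approximate $\varphi$ by smooth maps $\varphi_\varepsilon$, for instance by group convolution in exponential coordinates on the source. These $\varphi_\varepsilon$ are no longer contact, so the classical pullback $\varphi_\varepsilon^*\alpha$ can acquire components of weight strictly below $w(\alpha)$. Stokes still gives the exact identity for each $\varphi_\varepsilon$, and the weight count of Step 1 splits each integral into three parts:
\begin{enumerate}
\item a term involving only the weight-preserving part $\sum_p\pi_p^1\varphi_\varepsilon^*\pi_p^2$;
\item error terms involving only weight-decreasing components;
\item terms involving weight-increasing components, which vanish exactly against $\eta$ and $d\eta$ by the argument above.
\end{enumerate}
Two convergence statements then finish the proof:
\begin{enumerate}
\item[(a)] the weight-preserving part of $\varphi_\varepsilon^*\alpha$ converges to $\varphi_P^*\alpha$ in $L^1_{\mathrm{loc}}$;
\item[(b)] the weight-decreasing components tend to $0$ in $L^1_{\mathrm{loc}}$.
\end{enumerate}

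\emph{Main obstacle.} I expect (b) to be the main obstacle. For (a), I would use the a.e.\ Pansu differentiability of $W^{1,q}$ maps with $q>Q_1$, uniform convergence $\varphi_\varepsilon\to\varphi$ (Morrey embedding), and the block-diagonal description in Proposition~\ref{prop:triangularity-classical-differential}. For (b), the entries of $d\varphi_\varepsilon$ below the block diagonal measure the failure of the contact equations \eqref{contact equation in charts}. My first attempt would be to write them, through the explicit formula \eqref{formula esplicita differenziale classico}, as commutators between mollification and the nonlinear terms $\operatorname{ad}(\phi)^m$. I would then show these tend to zero in $L^{q/k}_{\mathrm{loc}}$, using $q>Q_1$ and the fact that they vanish identically for the limit map. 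If this direct estimate fails, the fallback is a blow-up argument at points of Pansu differentiability, in the spirit of \cite{kleiner2020pansu}, combined with dominated convergence.
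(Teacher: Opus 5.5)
The paper does not prove this statement. It is quoted from \cite{kleiner2020pansu}, so your proposal has to stand on its own. Step~1 is correct. Stokes' theorem plus the weight count proves the identity for smooth contact maps, using that $\varphi^*$ never lowers weight and that its weight-preserving part is $\varphi_P^*$ (Proposition~\ref{prop:weight-behaviour-pullbacks}). When you mollify $\alpha$, average its pullbacks under left translations rather than convolving Euclidean coefficients; this preserves the weight decomposition and commutes with $d$. The gap is in Step~2, and it affects (a) as much as (b).

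Your trichotomy is by net weight. For $k\geq2$, the net-weight-preserving part of $\varphi_\varepsilon^*\alpha$ is not the block-diagonal pullback. It also contains products of a weight-lowering contact-defect entry with a weight-raising entry, i.e.\ one going from a higher source layer to a lower target layer. On $\mathbb H^1$, with $a_{i,j}$ as in Section~\ref{chapter pansu pullback}, the weight-$3$ part of $\varphi_\varepsilon^*(\theta_1\wedge\tau)$ is
\[
(a_{1,1}a_{3,3}-a_{1,3}a_{3,1})\,\theta_1\wedge\tau+(a_{1,2}a_{3,3}-a_{1,3}a_{3,2})\,\theta_2\wedge\tau ,
\]
where $a_{1,3}=T\varphi_{\varepsilon,1}$ and $a_{3,1},a_{3,2}$ are the contact defects. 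The hypothesis $\varphi\in W^{1,q}_{\mathrm{loc}}$ controls only horizontal derivatives. So $T\varphi_{\varepsilon,1}$ is in general only $O(\varepsilon^{-1})$ and is not bounded in any $L^p$ uniformly in $\varepsilon$.

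Showing that the defects tend to $0$ in $L^{q/k}_{\mathrm{loc}}$ therefore gives neither (a) nor (b), since net-weight-lowering terms can contain such exploding factors too. You need a scale-matched estimate for every block. The block of $d\varphi_\varepsilon$ (in left-translated frames) from $V^1_i$ to $V^2_j$ must equal $\varepsilon^{\,j-i}$ times a matrix converging in $L^q_{\mathrm{loc}}$ to the corresponding block of $D_P\varphi$, which is zero off the diagonal. A term of net weight change $\Delta$ then carries the factor $\varepsilon^{-\Delta}$ times a product converging in $L^{q/(k+1)}_{\mathrm{loc}}$, and your trichotomy closes.

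Proving this estimate is the real content of the theorem, and your proposed mollifier does not provide it. The blow-up argument needs $\delta_{1/\varepsilon}\circ L_{\varphi_\varepsilon(x)^{-1}}\circ\varphi_\varepsilon\circ L_x\circ\delta_\varepsilon$ to be a unit-scale mollification of $\delta_{1/\varepsilon}\circ L_{\varphi(x)^{-1}}\circ\varphi\circ L_x\circ\delta_\varepsilon$. Averaging target points in exponential coordinates commutes with dilations. It commutes with left translations of $G_2$ only when the Baker--Campbell--Hausdorff product is affine in each variable, i.e.\ in step $2$.

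From step $3$ on, $p*w$ contains the quadratic term $\tfrac1{12}[w,[w,p]]$. The coordinate average of $\varphi(x)*w$ then differs from $\varphi(x)*\bar w$ by base-point-dependent covariance terms in the higher layers. Consequently even a graded homomorphism is not reproduced by the mollification: the rescaled limit acquires non-zero off-diagonal blocks. So a.e.\ Pansu differentiability no longer yields the block estimates. This is why \cite{kleiner2020pansu} builds a mollification equivariant under left translations and dilations of the target, via a Buser--Karcher centre of mass. Without such a construction, together with the accompanying domination bounds, Step~2 is a programme rather than a proof.
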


\begin{remark}
    In the specific setting of Example \ref{example not commutativity}, the weight constraint implies that admissible test forms must have the form $\eta = f_1 \theta_1 \wedge \tau + f_2 \theta_2 \wedge \tau$ for some functions $f_1, f_2 \in \mathcal{C}^{\infty}_c(\mathbb{H}^1)$, so that the vertical component associated with $\tau$ is effectively not detected.
\end{remark}

\medskip
We conclude this section by briefly discussing the commutativity between the Pansu pullback and the Rumin differential $d_c$ associated with the Rumin complex $(E_0^\bullet,d_c)$.

In this setting, only partial results are available. Weak commutativity has been established for Heisenberg groups $\mathbb H^n$ for all $n$ (see \cite{kleiner2021sobolevmappingsrumincomplex}). As for strong (pointwise) commutativity, once we assume enough Euclidean regularity, it is known to hold in the Heisenberg case. Indeed, this is a direct consequence of the commutativity between the standard pullback of a smooth contact map and the Rumin differentials in Heisenberg groups \cite{canarecci2019insightrumincohomologyorientability}. Moreover in \cite[Theorem 3.16]{FranchiTesi}, the authors prove that the classical pullback of a homogeneous group homomorphism whose transpose is a homogenous group homomorphism commutes with the Rumin differential on Carnot groups. In this case, the classical pullback and the Pansu pullback coincide (see \eqref{formula esplicita differenziale classico}), and therefore also the Pansu pullback of a homogenous group homomorphism whose transpose is a homogenous group homomorphism commutes with the Rumin differential. However, for arbitrary Pansu differentiable maps between Carnot groups where this transpose property fails, commutativity does not hold as shown in the example below.

\begin{example}\label{example dc not commutes with Pansu pullback}
    Let $\mathbb{H}^1 \times \mathbb{R}$ be the Carnot group associated with the Lie algebra $\mathfrak{h}^1 \times \mathbb{R}$ as in Example \ref{example comparison Pansu and classical differential}.\\
    Let $\{X_1,X_2,X_3,T\}$ be a basis of  $\mathfrak{h}^1 \times \mathbb{R}$ and denote by $\{\theta_1, \theta_2, \theta_3, \tau\}$ its dual basis.
    The Rumin complex associated with $\mathbb{H}^1 \times \mathbb{R}$ is 
    \begin{align*}
        \mathcal{C}^{\infty}(\mathbb{H}^1 \times \mathbb{R}) \xlongrightarrow{d_c} \text{span}_{\mathcal{C}^{\infty}(\mathbb{H}^1 \times \mathbb{R})}\{\theta_i \,|\, i=1,2,3\} \xlongrightarrow{d_c} \text{span}_{\mathcal{C}^{\infty}(\mathbb{H}^1 \times \mathbb{R})}\{\theta_i \wedge\theta_3, \theta_i \wedge \tau\,|\,i=1,2\} \longrightarrow \,\, \cdots\\
        \cdots \,\,\, \xlongrightarrow{d_c} \text{span}_{\mathcal{C}^{\infty}(\mathbb{H}^1 \times \mathbb{R})}\{\theta_i\wedge \theta_j \wedge \tau\,|\,1\leq i <j\leq 3\} \xlongrightarrow{d_c} \text{span}_{\mathcal{C}^{\infty}(\mathbb{H}^1 \times \mathbb{R})}\{\theta_1 \wedge \theta_2 \wedge \theta_3 \wedge \tau\}.
    \end{align*}

    Here, the differential $d_c$ has a nontrivial and rather involved definition (see \cite[Theorem 3.11]{FranchiTesi}). However, for the purposes of the computations below, we only need its explicit action on certain $1$-forms. More precisely, for every $f \in \mathcal{C}^{\infty}(\mathbb{H}^1 \times \mathbb{R})$,
    \begin{align*}
        d_c(f \,\theta_2) = -X_3f\, \theta_2 \wedge \theta_3 + X_1^2f \,\theta_1 \wedge \tau + (X_2X_1f-Tf)\,\theta_2 \wedge \tau 
    \end{align*}
    and
    \begin{align*}
        d_c(f\,\theta_3) = X_1f \, \theta_1 \wedge \theta_3 + X_2f \, \theta_2 \wedge \theta_3.
    \end{align*}
     Define the map $\varphi : \mathbb{H}^1 \times \mathbb{R} \to \mathbb{H}^1 \times \mathbb{R}$ by
\begin{equation*}
\varphi(x_1,x_2,x_3,t)
=
\Big(x_1,\; x_2,\; x_2+x_3,\; t\Big),
\end{equation*}
    where we identify $\mathbb{H}^1 \times \mathbb{R}$ with $\mathbb{R}^4$ via the exponential coordinates associated with the basis $\{X_1, X_2, X_3, T\}$. We use the same notation $\{X_1, X_2, X_3,T\}$ for the corresponding left-invariant vector fields in exponential coordinates (see \cite{LeDonneTripaldi2021} for the explicit formulae).
    A straightforward computation shows that $\varphi = (\varphi_1,\varphi_2, \varphi_3, \varphi_4)$ satisfies the contact equations
     \begin{equation*}
\begin{cases}
    X_1\varphi_4 + \frac{1}{2}(\varphi_2 X_1\varphi_1 - \varphi_1 X_1\varphi_2) = 0 \\
    X_2\varphi_4 + \frac{1}{2}(\varphi_2 X_2\varphi_1 - \varphi_1 X_2\varphi_2) = 0\\
    X_3\varphi_4 + \frac{1}{2}(\varphi_2 X_3\varphi_1 - \varphi_1 X_3\varphi_2) = 0
\end{cases}.
\end{equation*}
Hence $\varphi$ is Pansu differentiable by \cite[Theorem 1.1]{Magnani_2013} with Pansu derivative $D_P\varphi$ given by
 \begin{align*}
        D_P\varphi=\begin{bmatrix}
        1 & 0 & 0& 0\\
        0 & 1 & 0 & 0\\ 
        0 & 1 & 1 & 0\\ 
        0 & 0 & 0 & 1\end{bmatrix}\,,
    \end{align*}
    so that, 
    \begin{align*}
        \varphi_P^*\theta_1 =\theta_1, \quad \varphi_P^*\theta_2 =\theta_2, \quad \varphi_P^*\theta_3 =\theta_2 + \theta_3, \quad \varphi_P^*\tau =\tau.
    \end{align*}
    We now compare the two operators $d_c \circ \varphi_P^*$ and $\varphi_P^* \circ d_c$ acting on the Rumin $1$-form $f\theta_3$.
    On the one hand,
\begin{align*}
    d_c \circ \varphi_P^*(f\theta_3) =& d_c(f\circ\varphi \, \theta_2 + f \circ \varphi \,\theta_3)
    = X_1(f\circ \varphi)\, \theta_1 \wedge \theta_3 + (X_2(f\circ \varphi) - X_3(f \circ \varphi))\, \theta_2 \wedge \theta_3 +\\ &+ X_1^2(f\circ \varphi)\,\theta_1 \wedge \tau + (X_2X_1(f \circ \varphi) - T(f \circ \varphi))\,\theta_2 \wedge \tau = X_1f\circ \varphi \,\theta_1 \wedge \theta_3 +\\&+ X_2f \circ \varphi \, \theta_2 \wedge \theta_3 + (X_2X_1f \circ \varphi + X_3X_1f \circ \varphi - Tf \circ \varphi)\, \theta_2 \wedge \tau+ X_1^2f \circ \varphi \, \theta_1 \wedge \tau
    ,
\end{align*}
where in the last step we used the chain rule.
On the other hand
\begin{align*}
    \varphi_P^*\circ d_c(f \,\theta_3) &=\varphi_P^*(X_1f \, \theta_1 \wedge \theta_3 + X_2f \, \theta_2 \wedge \theta_3)\\ &= X_1f \circ \varphi \, \theta_1 \wedge \theta_2 + X_1f \circ \varphi \, \theta_1 \wedge \theta_3 + X_2f\circ \varphi \, \theta_2 \wedge \theta_3.
\end{align*}
Thus the two expressions cannot coincide in general. For instance, choosing $f(x) = x_1^2$, one obtains explicit formulas showing that the component $\theta_1 \wedge \tau$ do not vanish in the first expression, while it is absent in the second. Similarly the component $\theta_1 \wedge \theta_2$ non-zero in the second expression is absent in the first.

Moreover because of the presence of the component $\theta_1 \wedge \theta_2$  in the Pansu pullback of the Rumin form $d_c(f \theta_3)$ we have that the pullback of a Rumin form is not, in general, a Rumin form.

Indeed, we have that
\begin{align*}
    \varphi_P^\ast\circ d_c(f\theta_3)=\underbrace{d_c\circ\varphi_P^\ast(f\theta_3)}_{\text{weight 2}}-\underbrace{(X_2X_1f \circ \varphi + X_3X_1f \circ \varphi - Tf \circ \varphi)\, \theta_2 \wedge \tau- X_1^2f \circ \varphi \, \theta_1 \wedge \tau}_{\text{weight 3}}+\operatorname{Im}d_0
\end{align*}
which means that even after projecting onto the space of Rumin forms, the two operators $d_c \circ \varphi_P^*$ and $\varphi_P^* \circ d_c$ still produce different results.
\end{example}

\section{The Pansu pullback on spectral complexes }\label{chapter commutativity}

The previous section showed that, on arbitrary Carnot groups, neither the
exterior derivative nor the Rumin differential commutes in general with the
Pansu pullback. The purpose of this section is to show that the differentials
arising in the spectral complexes recover such a commutativity property. More precisely, let $\{(E_{j,l}^{\bullet,\bullet},\Delta_j)\}_{j\in I_{\bullet,\bullet}}$ be the family of spectral complexes associated with the de Rham complex $(\Omega^\bullet(G),d=d_0+d_1+\cdots+d_s)$ viewed as a truncated multicomplex. We prove that the Pansu pullback induced
by a smooth Pansu differentiable map between Carnot groups is compatible with
the differentials $\Delta_j$.

Throughout this section, as well as in the next one, all Pansu differentiable
maps are assumed to be smooth in the Euclidean sense. This ensures, in
particular, that both the classical pullback and the Pansu pullback of smooth
differential forms are well defined in the smooth category. Moreover, since we are working with Pansu-differentiable maps between two different Carnot groups $\varphi\colon G_1\to G_2$, we will be considering forms on both groups. To make the computations clearer, we will make the dependence on the underlying group explicit and
write
\begin{align*}
Z_r^{\bullet,\bullet}(G_i)\ \text{ and }\ B_r^{\bullet,\bullet}(G_i)\ ,\  i=1,2\,.
    \end{align*}
Whenever the group is clear from the context, we omit it from the notation.

It is worth stressing that, throughout this section, we formulate and prove all results for maps defined on the whole group.
Since the arguments use only pullback, differentiation, and weight projections, they apply directly to maps between arbitrary open subsets. Therefore, the same Pansu pullback commutativity result holds for smooth Pansu-differentiable maps $\varphi\colon U_1\subset G_1\to U_2\subset G_2$ defined on an open subset $U_1\subset G_1$, upon replacing $\Omega^\bullet(G_i)$ with $\Omega^\bullet(U_i)$.

The proofs of the main results rely on a more streamlined description
of the spaces $Z_r^{p,k-p}$ and $B_r^{p,k-p}$, and of the spectral differential
$\Delta_r$, given directly in terms of the exterior derivative and weights of forms (as opposed to those presented in Definition \ref{def: Z and B defined} and Proposition \ref{prop: Delta con diff}). We collect these
characterisations in the following lemma.

\begin{lemma}\label{remark: characterising Z and B with d}
    Let $r\geq 1$ and let $\alpha\in\Omega^{p,k-p}(G)$. Then
    \begin{align*}
    \alpha\in Z_r^{p,k-p}\ \Longleftrightarrow&\ \text{for }1\le j\le r-1\,,\text{ there exists }z_{p+j}\in\Omega^{p+j,k-p-j}(G)\text{ such that}\\&\ d\big( \alpha-\sum_{j=1}^{r-1}z_{p+j} \big) \in \Omega^{\geq p+r,k+1}(G)\,.\\
    \alpha\in B_r^{p,k-p}\ \Longleftrightarrow&\ \text{for }1\le j\le r\text{ there exists }c_{p-r+j}\in\Omega^{p- r+j,k-1-p+r-j}(G) \text{ such that }\\&\ d\big( c_{p-r+1} +c_{p-r+2} +\cdots +c_p \big) - \alpha \in \Omega^{\geq p+1,k}(G)\,.
\end{align*}   
    Finally, let $m,l\in\mathbb N$ be admissible indices for which the corresponding spaces of spectral forms are defined, and consider \begin{align*} 
    \Delta_r\colon Z_r^{p,k-p}/B_m^{p,k-p} \longrightarrow Z_l^{p+r,k+1-p-r}/B_r^{p+r,k+1-p-r}. 
    \end{align*}
    If $\alpha\in Z_r^{p,k-p}$ and $z_{p+1},\ldots,z_{p+r-1}$ are chosen as in Definition \ref{def: Z and B defined}, then
    \begin{align}\label{eq:scrittura delta in termini di d}
        \Delta_r ([\alpha]) = d\big( \alpha - \sum_{j=1}^{r-1} z_{p+j}\big) \,\,\operatorname{mod} B_r^{p+r,k+1-p-r} + \Omega^{\geq p+r+1, k+1}(G)\,.
    \end{align}
Therefore, $\Delta_r ([\alpha])$ is represented by the component of weight $p+r$ of the exterior derivative of the form $\alpha-\sum_{j=1}^{r-1}z_{p+j}$.

\end{lemma}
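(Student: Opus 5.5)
The plan is to translate each defining system of Definition~\ref{def: Z and B defined} into a statement about weight components of $d$ applied to a single non-homogeneous form. Throughout, Lemma~\ref{lem: decomposing d according to weight increase} says that $d_i$ raises weight by exactly $i$. Lemma~\ref{lem: different weights} says that an identity between forms holds if and only if it holds weight by weight.

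\emph{Cycles.} Set $\beta:=\alpha-\sum_{j=1}^{r-1}z_{p+j}$, with $z_{p+j}\in\Omega^{p+j,k-p-j}(G)$. Then
\[
d\beta=\sum_{n\ge0}\Bigl(d_n\alpha-\sum_{i=0}^{n-1}d_iz_{p+n-i}\Bigr),
\]
where the $n$-th bracket has pure weight $p+n$ and we set $z_{p+m}=0$ for $m\ge r$ or $m\le0$. In the $n=0$ term only $d_0\alpha$ survives. Hence $d\beta\in\Omega^{\ge p+r,k+1}(G)$ if and only if $d_0\alpha=0$ and $d_n\alpha=\sum_{i=0}^{n-1}d_iz_{p+n-i}$ for $1\le n\le r-1$. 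This is exactly the condition defining $Z_r$. The sign convention, $z$ versus $-z$, is harmless since the $z$'s are arbitrary.

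\emph{Boundaries.} Write $c:=c_{p-r+1}+\cdots+c_p$, reindexing $c_{p-j}$ for $0\le j\le r-1$. The weight-$(p-l)$ component of $dc$, for $1\le l\le r-1$, is
\[
\sum_{j=l}^{r-1}d_{j-l}c_{p-j},
\]
because $d_{j-l}$ sends weight $p-j$ to weight $p-l$. The weight-$p$ component is $\sum_{j=0}^{r-1}d_jc_{p-j}$. So the condition $dc-\alpha\in\Omega^{\ge p+1,k}(G)$ says precisely three things: the components of weight $p-r+1,\dots,p-1$ vanish, the weight-$p$ component equals $\alpha$, and nothing of weight below $p-r+1$ appears. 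The last point is automatic, since the lowest weight of $c$ is $p-r+1$ and $d$ never lowers weight. These are exactly the equations in the definition of $B_r$. The only real care needed is the index bookkeeping between $c_{p-j}$ and $c_{p-r+j}$, and I expect this to be the main place for errors. It is not a conceptual obstacle.

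\emph{The differential.} With $\beta$ as above, the cycle computation shows that $d\beta$ has no components of weight below $p+r$. Its weight-$(p+r)$ component is
\[
d_r\alpha-\sum_{i=0}^{r-1}d_iz_{p+r-i}=d_r\alpha-\sum_{i=1}^{r-1}d_iz_{p+r-i},
\]
since $z_{p+r}=0$. This is the representative given in Proposition~\ref{prop: Delta con diff}. Therefore $\Delta_r([\alpha])$ equals $d\beta$ modulo $\Omega^{\ge p+r+1,k+1}(G)$, taken as a class modulo $B_r^{p+r,k+1-p-r}$. This is the content of \eqref{eq:scrittura delta in termini di d}. Well-definedness on the mixed quotient, meaning independence of the choice of $z$'s and of the representative modulo $B_m$, is inherited from \eqref{eq:image of Z_r by Delta_r}–\eqref{eq:image of B_r by Delta_r}, so nothing new has to be proved there.
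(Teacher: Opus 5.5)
Your proof is correct and follows the paper's argument. Both expand $d$ of the corrected form weight by weight, using Lemma~\ref{lem: decomposing d according to weight increase} and Lemma~\ref{lem: different weights}, and match the resulting components against the equations of Definition~\ref{def: Z and B defined}; the only difference is that you prove both implications at once, whereas the paper writes out the forward direction and calls the converse a straightforward check. Your remark about the sign of the $z$'s is unnecessary: with $\beta=\alpha-\sum_{j=1}^{r-1}z_{p+j}$, the signs already agree exactly with the definition.
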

\begin{proof}
    We first prove the characterisation of $Z_r^{p,k-p}$ and $B_r^{p,k-p}$. This can easily be achieved by expanding the exterior derivative according to the weight of forms together with the equations that appear in Definition \ref{def: Z and B defined}. 

    If $\alpha\in Z_r^{p,k-p}$, then there exist $z_{p+j}\in\Omega^{p+j,k-p-j}(G)$ with $j=1,\ldots,r-1$, such that 
    \begin{equation*}
    \begin{aligned}
         d(\alpha-\sum_{j=1}^{r-1}z_{p+j})&=\underbrace{{d_0\alpha}}_{=0}+\underbrace{d_1\alpha-d_0z_{p+1}}_{=0}+\underbrace{d_2\alpha-d_1z_{p+1}-d_0z_{p+2}}_{=0}+\cdots+\\ &+\underbrace{d_{r-1}\alpha-\sum_{i=0}^{r-2}d_iz_{p+r-1-i}}_{=0}+d_r\alpha-\sum_{i=1}^{r-1}d_iz_{p+r-i}+\Omega^{\ge p+r+1,k+1}(G)\,.
    \end{aligned}    
    \end{equation*}
    In other words, $d(\alpha-\sum_{j=1}^{r-1}z_{p+j})\in\Omega^{\ge p+r,k+1}(G)$ as claimed. 
    
    Moreover, if we focus on the summand in weight $p+r$ in the explicit formula above, we can also recognise  part of the expression appearing in \eqref{eq:scrittura delta in termini di d}, and so this alternative characterisation follows easily once we apply the formula given in Proposition \ref{prop: Delta con diff}, so that
    \begin{align*}
        \Delta_r\big[\alpha\big]=\left[\pi_{p+r}d\bigg(\alpha-\sum_{j=1}^{r-1}z_{p+j}\bigg)\right]_{B_r^{p+r,k+1-p-r}}\,.
    \end{align*}
    
    Let us now take $\alpha\in B_r^{p,k-p}$, then there exist $c_{p-r+j}\in\Omega^{p-r+j,k-1-p+r-j}(G)$ such that
    \begin{align*}
        d\big( c_{p-r+1} +c_{p-r+2} +\cdots +c_p \big)=&\underbrace{d_0c_{p-r+1}}_{=0}+\underbrace{d_1c_{p-r+1}+d_0c_{p-r+2}}_{=0}+\cdots+\\+&\underbrace{\sum_{j=1}^{r-1}d_{r-1-j}c_{p-r+j}}_{=0}+\underbrace{\sum_{j=1}^{r}d_{r-j}c_{p-r+j}}_{=\alpha}+\Omega^{\ge p+1,k}(G)\,,
    \end{align*}
    which can easily be re-expressed as $d\big( c_{p-r+1} +c_{p-r+2} +\cdots +c_p \big) - \alpha \in \Omega^{\ge p+1,k}(G) $ as claimed.

    The converse implications are straightforward checks.
\end{proof}

Before addressing the commutativity with the spectral differentials, we must show that the Pansu pullback descends to the quotients $Z_r^{p,k-p}/B_m^{p,k-p}$. We first consider the case $r=m=1$, which involves only the algebraic differential $d_0$.

\begin{lemma}\label{lem: d_0 commutes}
Let $\varphi\colon G_1\to G_2$ be a smooth Pansu differentiable map. Then the
Pansu pullback commutes with the algebraic differential $d_0$, namely 
\begin{align*}
    \varphi_P^\ast d_0\alpha=d_0\varphi_P^\ast\alpha\ \text{ for all }\alpha\in\Omega^\bullet(G_2)\,.
\end{align*}
\end{lemma}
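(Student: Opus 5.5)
The plan is to reduce the statement to a pointwise, purely algebraic identity. The key inputs are two facts: $d_0$ is $\mathcal C^\infty$-linear and acts fibrewise, and the Pansu derivative $D_P\varphi(x)\colon\mathfrak g_1\to\mathfrak g_2$ is a Lie algebra homomorphism at every point.

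By linearity, and by the identification $\Omega^k(G_2)\cong\mathcal C^\infty(G_2)\otimes\bigwedge^k\mathfrak g_2^*$, it suffices to treat $\alpha=f\,\xi$ with $f\in\mathcal C^\infty(G_2)$ and $\xi\in\bigwedge^k\mathfrak g_2^*$. The proof of Lemma~\ref{lem: decomposing d according to weight increase} gives $d_0(f\xi)=f\,d\xi$, where $d\xi$ is the Chevalley--Eilenberg differential \eqref{eq: formula for d_0} on $\bigwedge^\bullet\mathfrak g_2^*$. By Definition~\ref{definition pansu pullback},
\[
\varphi_P^*(f\xi)=(f\circ\varphi)\,\bigl(D_P\varphi(\cdot)^*\xi\bigr),
\]
understood pointwise. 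More precisely, at each $x\in G_1$ the value is $(f\circ\varphi)(x)\,D_P\varphi(x)^*\xi\in\bigwedge^k\mathfrak g_1^*$. The coefficients of $D_P\varphi(x)^*\xi$ in a fixed basis depend smoothly on $x$, since $\varphi$ is smooth and $D_P\varphi$ is the block-diagonal part of the classical differential (Proposition~\ref{prop:triangularity-classical-differential}). Hence $\varphi_P^*(f\xi)$ is a smooth form, and $d_0$ acts on it fibrewise. Concretely, $d_0\bigl(\sum_i g_i\eta_i\bigr)=\sum_i g_i\,d\eta_i$, so at each $x$ we get
\[
\bigl(d_0\varphi_P^*(f\xi)\bigr)(x)=(f\circ\varphi)(x)\, d\bigl(D_P\varphi(x)^*\xi\bigr).
\]
On the other side,
\[
\bigl(\varphi_P^*d_0(f\xi)\bigr)(x)=(f\circ\varphi)(x)\,D_P\varphi(x)^*(d\xi).
\]
The claim therefore reduces to showing $d\circ L^*=L^*\circ d$ on $\bigwedge^\bullet\mathfrak g_2^*$ for every Lie algebra homomorphism $L\colon\mathfrak g_1\to\mathfrak g_2$.

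This last identity is the standard naturality of the Chevalley--Eilenberg differential. I would verify it directly from \eqref{eq: formula for d_0}: for $X_1,\dots,X_{k+1}\in\mathfrak g_1$,
\[
(L^*d\xi)(X_1,\ldots,X_{k+1})=\sum_{i<l}(-1)^{i+l}\xi\bigl([LX_i,LX_l],LX_1,\ldots,\widehat{LX_i},\ldots,\widehat{LX_l},\ldots\bigr).
\]
Using $[LX_i,LX_l]=L[X_i,X_l]$, this equals $d(L^*\xi)(X_1,\ldots,X_{k+1})$. Alternatively, one can note that both sides are antiderivations agreeing on $\mathfrak g_2^*$, where the equality is exactly the homomorphism property, and that $L^*$ respects wedge products.

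The only point requiring care is justifying that $D_P\varphi(x)$, viewed on the Lie algebras, is a Lie algebra homomorphism. This holds because it is a group homomorphism between simply connected nilpotent groups, and such a map is intertwined by $\exp$ with a Lie algebra homomorphism, as recorded in Definition~\ref{def: P deriv} and the discussion before Definition~\ref{definition pansu pullback}.

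A second, minor point is bookkeeping: $d_0$ on $G_1$ and on $G_2$ is each the respective fibrewise Chevalley--Eilenberg differential. No derivatives of the coefficients $f\circ\varphi$ or of $D_P\varphi(x)$ enter, precisely because $d_0$ is $\mathcal C^\infty$-linear. I do not expect a genuine obstacle beyond these verifications.
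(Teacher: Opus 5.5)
Your proposal is correct and follows essentially the same route as the paper: reduce to $\alpha=f\xi$, use that $d_0(f\xi)=f\,d\xi$ acts fibrewise, and conclude from the Chevalley--Eilenberg formula \eqref{eq: formula for d_0} together with the homomorphism property $[D_P\varphi(x)X_i,D_P\varphi(x)X_l]=D_P\varphi(x)[X_i,X_l]$. Your remarks on the smoothness of the pulled-back coefficients and on the $\mathcal C^\infty$-linearity of $d_0$ on $G_1$ make explicit points that the paper leaves implicit, but they do not change the argument.
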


\begin{proof}
This is a direct consequence of the fact that the Pansu derivative is a Lie algebra homomorphism $D_P\varphi\colon\mathfrak{g}_1\to\mathfrak{g}_2$, i.e. for any $X_1,X_2\in\mathfrak{g}_1$ we have $[(D_P\varphi) X_1,(D_P\varphi) X_2]=(D_P\varphi)[X_1,X_2]$.

Let us first show this explicitly for an arbitrary 1-form. By the linearity of both $d_0$ and the Pansu pullback, we can assume without loss of generality that $\alpha=f\xi\in\Omega^1(G_2)$, so that for every $X_1,X_2\in\mathfrak{g}_1$
\begin{align*}
    (\varphi_P^\ast d_0\alpha)(X_1,X_2)=&\varphi_P^\ast (fd\xi)(X_1,X_2)=(f\circ \varphi)\, d\xi((D_P\varphi)X_1,(D_P\varphi)X_2)=-(f\circ\varphi)\,\xi([(D_P\varphi)X_1,(D_P\varphi)X_2])\\=&-(f\circ\varphi)\, \xi\big((D_P\varphi)[X_1,X_2]\big)=-(\varphi^\ast_P\alpha)([X_1,X_2])=(d_0\varphi_P^\ast\alpha)(X_1,X_2)\,.
\end{align*}

In general, given a smooth form $\alpha=f\xi\in\Omega^k(G_2)$, one can use the formula \eqref{eq: formula for d_0} to get that
\begin{align*}
    &(\varphi_P^\ast d_0\alpha)(X_1,\ldots,X_{k+1})=(f\circ\varphi)\,d\xi\big((D_P\varphi)X_1,\ldots,(D_P\varphi)X_{k+1}\big)\\&=\sum_{1\le i<j\le k+1}(-1)^{i+j}(f\circ \varphi)\,\xi\big([(D_P\varphi )X_i,(D_P\varphi)X_j],(D_P\varphi)X_1,\ldots,\widehat{(D_P\varphi)X_i},\ldots,\widehat{(D_P\varphi)X_j},\ldots,(D_P\varphi)X_{k+1}\big)\\&=\sum_{1\le i<j\le k+1}(-1)^{i+j}(f\circ\varphi)\,\xi\big((D_P\varphi)[X_i,X_j],(D_P\varphi)X_1,\ldots,\widehat{(D_P\varphi)X_i},\ldots,\widehat{(D_P\varphi)X_j},\ldots,(D_P\varphi)X_{k+1}\big)\\&=\sum_{1\le i<j\le k+1}(-1)^{i+j}\varphi_P^\ast\alpha([X_i,X_j],X_1,\ldots,\widehat{X_i},\ldots,\widehat{X_j},\ldots,X_{k+1})=(d_0\varphi_P^\ast\alpha)(X_1,\ldots,X_{k+1})
\end{align*}
holds for any $X_1,\ldots,X_{k+1}\in\mathfrak{g}_1$. In the computation above, we have written $D_P\varphi$ in place of
$D_P\varphi(x)$ to simplify the notation.
\end{proof}

\begin{proposition}\label{prop: commutativity at page 1}
    Let $\varphi\colon G_1\to G_2$ be a smooth Pansu differentiable map. Then,
    \begin{align*}
        \varphi_P^\ast Z_1^{p,k-p}(G_2)\subset Z_1^{p,k-p}(G_1)\ \text{ and }\varphi_P^\ast B_1^{p,k-p}(G_2)\subset B_1^{p,k-p}(G_1)\ \text{ for every }p,k\in\mathbb{N}\,.
    \end{align*}
\end{proposition}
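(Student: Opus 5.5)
The statement is a direct consequence of Lemma~\ref{lem: d_0 commutes}, together with the weight-preservation part of Proposition~\ref{prop:weight-behaviour-pullbacks}. The plan is first to unwind Definition~\ref{def: Z and B defined} in the case $r=1$. For $r=1$ there are no correction forms $z_{p+j}$, so
\[
Z_1^{p,k-p}=\ker d_0\cap\Omega^{p,k-p}(G).
\]
Similarly, for $r=1$ the only datum is $c_p\in\Omega^{p,k-1-p}(G)$, there are no constraint equations, and
\[
B_1^{p,k-p}=d_0\bigl(\Omega^{p,k-1-p}(G)\bigr).
\]
These identifications hold on both $G_1$ and $G_2$.

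For the cycles, take $\alpha\in Z_1^{p,k-p}(G_2)$. By Proposition~\ref{prop:weight-behaviour-pullbacks}, $\varphi_P^*\alpha\in\Omega^{p,k-p}(G_1)$. By Lemma~\ref{lem: d_0 commutes},
\[
d_0\varphi_P^*\alpha=\varphi_P^*d_0\alpha=\varphi_P^*0=0,
\]
so $\varphi_P^*\alpha\in Z_1^{p,k-p}(G_1)$.

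For the boundaries, take $\alpha=d_0c_p$ with $c_p\in\Omega^{p,k-1-p}(G_2)$. Again by Lemma~\ref{lem: d_0 commutes},
\[
\varphi_P^*\alpha=d_0\bigl(\varphi_P^*c_p\bigr).
\]
Proposition~\ref{prop:weight-behaviour-pullbacks} gives $\varphi_P^*c_p\in\Omega^{p,k-1-p}(G_1)$, hence $\varphi_P^*\alpha\in B_1^{p,k-p}(G_1)$.

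The edge conventions also need checking. In degree zero we have $B_1^{0,0}=0$, which is preserved trivially. In top degree the conventions apply only when $\varphi_P^*$ lands in the top degree of $G_1$. In every other case the identities above hold verbatim, and no genuine obstacle arises.

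The only step needing any care is confirming that $\varphi_P^*$ is well defined on $\Omega^\bullet$ as a $\mathcal C^\infty$-compatible map, so that Lemma~\ref{lem: d_0 commutes} can be applied pointwise to the coefficient-function expansions. This follows because $d_0$ is $\mathcal C^\infty$-linear: $d_0(f\xi)=f\,d\xi$ on both groups. Meanwhile $\varphi_P^*$ sends $f\xi$ to $(f\circ\varphi)\,D_P\varphi(\cdot)^*\xi$, which is again a smooth form because $\varphi$ is Euclidean smooth and $D_P\varphi$ depends smoothly on $x$ via the block-diagonal part of $d(\widetilde\phi)_0$ (Proposition~\ref{prop:triangularity-classical-differential}).
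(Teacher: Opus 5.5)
Your proof is correct and follows essentially the same route as the paper: you identify $Z_1^{p,k-p}=\ker d_0\cap\Omega^{p,k-p}$ and $B_1^{p,k-p}=d_0\bigl(\Omega^{p,k-1-p}\bigr)$ from Definition~\ref{def: Z and B defined} at $r=1$, and then apply Lemma~\ref{lem: d_0 commutes} to each. Your added remarks on weight preservation, the edge conventions and the smoothness of $\varphi_P^*\alpha$ are harmless details that the paper leaves implicit.
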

\begin{proof}
    Following Definition \ref{def: Z and B defined} when $r=1$, we get
    \begin{align*}
        \alpha\in Z_1^{p,k-p}(G_2)\Longleftrightarrow d_0\alpha=0\,,
    \end{align*}
    hence, by Lemma \ref{lem: d_0 commutes}, if $\alpha\in Z_1^{p,k-p}(G_2)$ we get that $d_0\varphi_P^\ast\alpha=\varphi_P^\ast d_0\alpha=0$ and so $\varphi_P^\ast\alpha\in Z_1^{p,k-p}(G_1)$.

    On the other hand, 
    \begin{align*}
        \alpha\in B_1^{p,k-p}(G_2)\ \Longleftrightarrow\ \alpha=d_0\beta_p\ \text{ for some }\beta_p\in\Omega^{p,k-1-p}(G_2)
    \end{align*}
    and so, again by Lemma \ref{lem: d_0 commutes}, if $\alpha\in B_1^{p,k-p}(G_2)$, then $\varphi_P^\ast\alpha=\varphi_P^\ast d_0\beta_p=d_0\varphi^\ast_P\beta_p\in B_1^{p,k-p}(G_1)$.
\end{proof}
\begin{remark}
    Proposition \ref{prop: commutativity at page 1} gives a well-defined map between the quotients $Z_1^{p,k-p}/B_1^{p,k-p}$, but does not imply that the Pansu pullback preserves their harmonic representatives, i.e. the inclusion $\varphi_P^\ast\big(\ker \Box_0\cap\Omega^{p,k-p}(G_2)\big)\subset \ker\Box_0\cap\Omega^{p,k-p}(G_1)$ need not hold. Indeed, in general, we will only have the inclusion
    \begin{align*}
        \varphi_P^\ast\left(\ker\Box_0\cap\Omega^{p,k-p}(G_2)\right)\subset Z_1^{p,k-p}(G_1)\,.
    \end{align*}
    This can be easily seen already in the case of a Pansu differentiable map $\varphi\colon\mathbb H^1\times\R\to\mathbb H^1\times\R$ such as the one presented in Example \ref{example dc not commutes with Pansu pullback}.

The identity in Lemma \ref{lem: d_0 commutes} can also be viewed as the weight-preserving component of the classical identity $d\varphi^*=\varphi^*d$. To describe the corresponding identities at higher weights, we decompose the classical pullback according to its weight increase.
\end{remark}
\begin{lemma}[Weight components of the classical pullback]
\label{lem:weight-components-classical-pullback}
Let \(\varphi:G_1\to G_2\) be a Euclidean smooth Pansu
differentiable map. For every \(j\geq0\), define
\[
\Phi_j:\Omega^{p,k-p}(G_2)
\longrightarrow
\Omega^{p+j,k-p-j}(G_1)\ ,\ 
\Phi_j\alpha
:=
\pi_{p+j}^1\bigl(\varphi^*\alpha\bigr)\,.
\]
Then for every $\alpha\in\Omega^{p,k-p}(G_2)$, we have $\varphi^*\alpha=\sum_{j\geq0}\Phi_j\alpha$ and $\Phi_0=\varphi_P^*$.
Moreover, for every \(n\geq0\),
\begin{equation}
\label{eq:components-filtered-chain-map}
\sum_{a+b=n}d_a\Phi_b
=
\sum_{a+b=n}\Phi_a d_b\,.
\end{equation}
\end{lemma}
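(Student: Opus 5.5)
The proof splits into three parts: the decomposition of $\varphi^*\alpha$ into the components $\Phi_j\alpha$, the identification $\Phi_0=\varphi_P^*$, and the identities \eqref{eq:components-filtered-chain-map}.

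\emph{Decomposition and $\Phi_0$.} By Proposition~\ref{prop:weight-behaviour-pullbacks}, for $\alpha\in\Omega^{p,k-p}(G_2)$ we have $\varphi^*\alpha\in\Omega^{\geq p,k}(G_1)$. Since $\Omega^k(G_1)$ is the direct sum of its weight components (Lemma~\ref{lem: different weights} and \eqref{eq: direct sum decomp of forms according to weight}), it follows that
\[
\varphi^*\alpha=\sum_{q\geq p}\pi^1_q(\varphi^*\alpha)=\sum_{j\geq 0}\Phi_j\alpha ,
\]
and the sum is finite because weights are bounded by $Q_1$. The same proposition gives $\pi^1_p(\varphi^*\alpha)=\varphi_P^*\alpha$, that is, $\Phi_0=\varphi_P^*$. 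Each $\Phi_j$ is $\mathbb R$-linear and has bidegree $(j,-j)$ by construction.

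\emph{The identities \eqref{eq:components-filtered-chain-map}.} Fix $\alpha\in\Omega^{p,k-p}(G_2)$ and start from the classical naturality $d\varphi^*\alpha=\varphi^*d\alpha$. On the left, write $d=\sum_a d_a$ (Lemma~\ref{lem: decomposing d according to weight increase} on $G_1$) and $\varphi^*\alpha=\sum_b\Phi_b\alpha$. The term $d_a\Phi_b\alpha$ lies in $\Omega^{p+a+b,\,k+1-p-a-b}(G_1)$, so
\[
d\varphi^*\alpha=\sum_{n}\ \sum_{a+b=n}d_a\Phi_b\alpha .
\]
On the right, write $d\alpha=\sum_b d_b\alpha$ with $d_b\alpha\in\Omega^{p+b,k+1-p-b}(G_2)$. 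Applying $\varphi^*$ to each homogeneous piece and decomposing again gives $\varphi^*d_b\alpha=\sum_a\Phi_a d_b\alpha$, where $\Phi_a d_b\alpha$ has weight $p+a+b$. Hence
\[
\varphi^*d\alpha=\sum_n\ \sum_{a+b=n}\Phi_a d_b\alpha .
\]
Applying $\pi^1_{p+n}$ to both expressions and using uniqueness of the weight decomposition (Lemma~\ref{lem: different weights}) yields \eqref{eq:components-filtered-chain-map} on homogeneous forms. The identity then extends to arbitrary forms by linearity.

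The argument is essentially bookkeeping, and the one point that needs care is that $\Phi_a$ must be applied to the \emph{homogeneous} pieces $d_b\alpha$. This is exactly why $\Phi_a$ is defined weight by weight (with $p$ read off from the input), rather than as a global projection. With this convention in place, both sides are sorted by total weight increase $n$ and compared degree by degree. As a consistency check, the case $n=0$ reduces to $d_0\varphi_P^*=\varphi_P^*d_0$, which recovers Lemma~\ref{lem: d_0 commutes}.
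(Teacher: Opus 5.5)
Your proof is correct and follows the same route as the paper. Both arguments use Proposition~\ref{prop:weight-behaviour-pullbacks} to obtain $\varphi^*\alpha=\sum_{j\geq0}\Phi_j\alpha$ with $\Phi_0=\varphi_P^*$, then expand both sides of $d\varphi^*\alpha=\varphi^*d\alpha$ by weight and compare the components of weight $p+n$; your $n=0$ check recovering Lemma~\ref{lem: d_0 commutes} also matches the paper's remark.
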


\begin{proof}
By Proposition~\ref{prop:weight-behaviour-pullbacks}, the classical
pullback does not decrease weight. Thus, if
\(\alpha\in\Omega^{p,k-p}(G_2)\), then
\[
\varphi^*\alpha
\in
\bigoplus_{j\geq0}\Omega^{p+j,k-p-j}(G_1)\ \text{
which gives }
\
\varphi^*\alpha=\sum_{j\geq0}\Phi_j\alpha\,.
\]
The same proposition also implies that $
\Phi_0\alpha
=
\pi_p^1(\varphi^*\alpha)
=
\varphi_P^*\alpha$.

Let \(\alpha\in\Omega^{p,k-p}(G_2)\), then we have the following expressions in terms of the weight of forms
\[
d\varphi^*\alpha
=
\sum_{a,b\geq0}d_a\Phi_b\alpha\ \text{ and }\
\varphi^*d\alpha
=
\sum_{a,b\geq0}\Phi_a d_b\alpha\,.
\]
Using the classical commutativity result $d\varphi^\ast=\varphi^\ast d$ and taking the component of weight \(p+n\) on both sides yields
\[
\sum_{a+b=n}d_a\Phi_b\alpha
=
\sum_{a+b=n}\Phi_a d_b\alpha.
\]
Moreover, taking \(n=0\)
and using \(\Phi_0=\varphi_P^*\) gives an alternative proof of Lemma \ref{lem: d_0 commutes}.

This commutation identity can also be obtained pointwise
from the naturality of the exterior derivative under the
graded group homomorphism $D_P\varphi(x)$, for each fixed
$x\in G_1$ (see also the proof of
\cite[Lemma 4.8]{kleiner2020pansu}). Indeed, pullback by this homomorphism preserves
left-invariant forms, on which $d$ coincides with $d_0$, so that
$$d_0\bigl((D_P\varphi(x))^*\xi\bigr)
=
(D_P\varphi(x))^*(d_0\xi)\ \text{ for every }\xi\in\bigwedge\nolimits^\bullet\mathfrak g_2^\ast\,.$$
The identity $d_0\varphi_P^*=\varphi_P^*d_0$ then follows
by the $\C^\infty(G_h)$-linearity of $d_0$.
\end{proof}

We now extend Proposition \ref{prop: commutativity at page 1} to arbitrary cycle and boundary spaces.

\begin{proposition}[Preservation of the spaces \(Z_r\) and \(B_r\)]
\label{prop:Pansu-preserves-Zr-Br}
Let \(\varphi:G_1\to G_2\) be a Euclidean smooth Pansu
differentiable map. Then, for every \(r\geq1\),
$$
\varphi_P^*
\bigl(Z_r^{p,k-p}(G_2)\bigr)
\subseteq
Z_r^{p,k-p}(G_1)
\ \text{ and }\ 
\varphi_P^*
\bigl(B_r^{p,k-p}(G_2)\bigr)
\subseteq
B_r^{p,k-p}(G_1)\,.
$$
Consequently, for every $r,m\ge 1$, since 
$B_m^{p,k-p}\subseteq Z_r^{p,k-p}$ by \cite[Lemma 4.1]{tripaldi2026spectralcomplexestruncatedmulticomplexes}, the Pansu pullback induces a
well-defined map
\begin{align}\label{eq: def pullback on quotients}
\frac{Z_r^{p,k-p}(G_2)}{B_m^{p,k-p}(G_2)}
\longrightarrow
\frac{Z_r^{p,k-p}(G_1)}{B_m^{p,k-p}(G_1)} \quad ,\quad 
[\alpha]
\mapsto
[\varphi_P^*\alpha]\,.
\end{align}
\end{proposition}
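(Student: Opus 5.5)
The natural tool is the characterisation of $Z_r$ and $B_r$ in Lemma~\ref{remark: characterising Z and B with d}, combined with naturality of the classical pullback, $d\varphi^*=\varphi^*d$, and the weight behaviour from Proposition~\ref{prop:weight-behaviour-pullbacks}. The idea is to transport the correction forms with the classical pullback $\varphi^*$, which preserves the weight filtration. Afterwards we read off the weight-$p$ component, which is exactly $\varphi_P^*\alpha$ by the identity $\Phi_0=\varphi_P^*$ of Lemma~\ref{lem:weight-components-classical-pullback}.

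\emph{Cycles.} Let $\alpha\in Z_r^{p,k-p}(G_2)$, with $z_{p+1},\dots,z_{p+r-1}$ chosen as in Lemma~\ref{remark: characterising Z and B with d}. Set $\beta:=\alpha-\sum_j z_{p+j}$, so that $d\beta\in\Omega^{\ge p+r,k+1}(G_2)$. Then
\[
d\varphi^*\beta=\varphi^*d\beta\in\Omega^{\ge p+r,k+1}(G_1),
\]
because $\varphi^*$ does not decrease weight. Since every $z_{p+j}$ has weight $p+j\ge p+1$, we also have $\varphi^*\beta=\varphi_P^*\alpha+\gamma$ with $\gamma\in\Omega^{\ge p+1,k}(G_1)$. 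The form $\gamma$ may contain components of weight $\ge p+r$, which the characterisation does not allow as corrections. I will therefore define $z'_{p+j}:=-\pi^1_{p+j}\gamma$ for $1\le j\le r-1$ and discard the higher components. Discarding them changes $d(\cdot)$ only by terms of weight $\ge p+r$, since $d$ does not decrease weight. Hence
\[
d\Bigl(\varphi_P^*\alpha-\sum_{j=1}^{r-1}z'_{p+j}\Bigr)\in\Omega^{\ge p+r,k+1}(G_1),
\]
which shows $\varphi_P^*\alpha\in Z_r^{p,k-p}(G_1)$.

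\emph{Boundaries.} Let $\alpha\in B_r^{p,k-p}(G_2)$, with $c:=c_{p-r+1}+\dots+c_p$ satisfying $dc-\alpha\in\Omega^{\ge p+1,k}(G_2)$. Then
\[
d\varphi^*c-\varphi^*\alpha=\varphi^*(dc-\alpha)\in\Omega^{\ge p+1,k}(G_1).
\]
Moreover $\varphi^*\alpha-\varphi_P^*\alpha\in\Omega^{\ge p+1,k}(G_1)$, so $d\varphi^*c-\varphi_P^*\alpha\in\Omega^{\ge p+1,k}(G_1)$. The form $\varphi^*c$ has components of weight $\ge p-r+1$. I truncate it to $c':=\sum_{j=1}^{r}\pi^1_{p-r+j}\varphi^*c$, removing the components of weight $\ge p+1$. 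This changes $d$ only in weights $\ge p+1$. Therefore $dc'-\varphi_P^*\alpha\in\Omega^{\ge p+1,k}(G_1)$, and $\varphi_P^*\alpha\in B_r^{p,k-p}(G_1)$.

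The well-definedness on quotients is then immediate. Both inclusions hold and $\varphi_P^*$ is linear, and $B_m\subseteq Z_r$ holds on $G_1$ by \cite[Lemma 4.1]{tripaldi2026spectralcomplexestruncatedmulticomplexes}. Hence $\varphi_P^*$ maps $Z_r(G_2)$ into $Z_r(G_1)$ and $B_m(G_2)$ into $B_m(G_1)$, and so induces the map \eqref{eq: def pullback on quotients}.

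The main obstacle is the bookkeeping in the truncation steps. One must check that the converse direction of Lemma~\ref{remark: characterising Z and B with d} really accepts exactly a finite family of homogeneous corrections in the prescribed weight range. One must also check that dropping the higher-weight pieces is harmless, which relies only on $d$ being weight non-decreasing, i.e. $d=d_0+d_1+\cdots$. If the converse implication needs care, I would verify it directly by expanding in weights as in that lemma's proof.
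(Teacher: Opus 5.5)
Your proposal is correct and follows essentially the same route as the paper. Both arguments transport the correction forms with the classical pullback $\varphi^*$, use $d\varphi^*=\varphi^*d$ together with the fact that $\varphi^*$ and $d$ do not decrease weight, truncate to the admissible weight range, and identify the weight-$p$ component with $\varphi_P^*\alpha$; your corrections $z'_{p+j}=-\pi^1_{p+j}\gamma$ and your truncated form $c'$ are exactly those used in the paper, and the converse direction of Lemma~\ref{remark: characterising Z and B with d} that you flag does hold by the weight expansion in its proof.
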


\begin{proof}
We prove the two inclusions separately.

Assume first that $\alpha\in Z_r^{p,k-p}(G_2)$.
By Lemma~\ref{remark: characterising Z and B with d}, there exist forms $z_{p+j}\in\Omega^{p+j,k-p-j}(G_2)$ for $1\le i\le r-1$ such that
\[
d\left(\alpha-\sum_{j=1}^{r-1}z_{p+j}\right)\in\Omega^{\geq p+r,k+1}(G_2)\,.
\]
Since the ordinary pullback preserves the weight filtration (by Proposition \ref{prop:weight-behaviour-pullbacks}) and it
also commutes with the exterior derivative,
\[
d\varphi^*\left(\alpha-\sum_{j=1}^{r-1}z_{p+j}\right)
=
\varphi^*d\left(\alpha-\sum_{j=1}^{r-1}z_{p+j}\right)
\in
\Omega^{\geq p+r,k+1}(G_1)\,.
\]

Because
\[
\varphi^*\left(\alpha-\sum_{j=1}^{r-1}z_{p+j}\right)-\sum_{j=0}^{r-1}
\pi_{p+j}^1\left(\varphi^*\left(\alpha-\sum_{j=1}^{r-1}z_{p+j}\right)\right)
\in
\Omega^{\geq p+r,k}(G_1)
\]
and \(d\) does not decrease weight, it follows that
\begin{align}\label{eq: then Z_r}
d\left(\sum_{j=0}^{r-1}
\pi_{p+j}^1\left(\varphi^*\left(\alpha-\sum_{j=1}^{r-1}z_{p+j}\right)\right)\right)
\in
\Omega^{\geq p+r,k+1}(G_1)\,.
\end{align}
Moreover, since
\[
\pi_p^1\left(\varphi^*\left(\alpha-\sum_{j=1}^{r-1}z_{p+j}\right)\right)
=
\pi_p^1(\varphi^*\alpha)
=
\varphi_P^*\alpha\,,
\] we have
$$\sum_{j=0}^{r-1}
\pi_{p+j}^1\left(\varphi^*\left(\alpha-\sum_{j=1}^{r-1}z_{p+j}\right)\right)=\varphi_P^\ast\alpha-\sum_{j=1}^{r-1}z'_{p+j}$$
where $z'_{p+j}
:=
-\pi_{p+j}^1(\varphi^*(\alpha-\sum_{i=1}^{r-1}z_{p+i}))$ for \(1\leq j\leq r-1\).

Therefore, \eqref{eq: then Z_r} implies that, given the form $\varphi_P^\ast\alpha\in\Omega^{p,k-p}(G_1)$, there exist $z'_{p+j}\in\Omega^{p+j,k-p-j}(G_1)$ for $1\le j\le r-1$ such that
\[
d
\left(
\varphi_P^*\alpha
-
\sum_{j=1}^{r-1}z'_{p+j}
\right)
\in
\Omega^{\geq p+r,k+1}(G_1)\,.
\]
In other words, $\varphi_P^*\alpha\in Z_r^{p,k-p}(G_1)$ there the relative correction forms $z'_{p+j}\in\Omega^{p+j,k-p-j}(G_1)$ are given by
$$
z'_{p+j}
=
\sum_{h=1}^{j}\Phi_{j-h}z_{p+h}
-
\Phi_j\alpha\ ,
\
1\leq j\leq r-1\,.$$
We now prove the statement for \(B_r\). Assume that $\alpha\in B_r^{p,k-p}(G_2)$.
By Definition~\ref{def: Z and B defined}, there exist $c_{p-j}
\in
\Omega^{p-j,k-1-p+j}(G_2)$ for $0\leq j\leq r-1$
such that
$$d\left(\sum_{j=0}^{r-1}c_{p-j}\right)
=
\alpha+\gamma\ \text{ for some }
\
\gamma\in\Omega^{\geq p+1,k}(G_2)\,.$$
Applying the ordinary pullback gives
\[
d\left(\varphi^*\left(\sum_{j=0}^{r-1}c_{p-j}\right)\right)
=
\varphi^*\left(d\left(\sum_{j=0}^{r-1}c_{p-j}\right)\right)
=
\varphi^*\alpha+\varphi^*\gamma
=
\varphi_P^*\alpha
+
\Omega^{\geq p+1,k}(G_1)\,.
\]
Since \(\varphi^*\) does not decrease weight, we have
\[
\varphi^*\left(\sum_{j=0}^{r-1}c_{p-j}\right)-\sum_{j=0}^{r-1}
\pi_{p-j}^1\left(\varphi^*\left(\sum_{j=0}^{r-1}c_{p-j}\right)\right)
\in
\Omega^{\geq p+1,k-1}(G_1)\,,
\]
and so
\[
d \left( \sum_{j=0}^{r-1}
\pi_{p-j}^1\left(\varphi^*\left(\sum_{j=0}^{r-1}c_{p-j}\right)\right)\right)
=
\varphi_P^*\alpha
+
\Omega^{\geq p+1,k}(G_1)\,.
\]
By the characterisation of $B_r$ presented in Lemma \ref{remark: characterising Z and B with d}, this proves that $\varphi_P^*\alpha\in B_r^{p,k-p}(G_1)$.

The two inclusions show that $[\alpha]\longmapsto[\varphi_P^*\alpha]$
is well-defined on every admissible quotient
\(Z_r^{p,k-p}/B_m^{p,k-p}\).
\end{proof}

To avoid cumbersome notation, we will again use $\varphi_P^*$ to denote the maps induced by $\varphi_P^*$ on the quotients in \eqref{eq: def pullback on quotients}. 
Having established their well-definedness, we now prove their compatibility with the spectral differentials.

\begin{theorem}[Commutativity of the Pansu pullback with the spectral differentials]
\label{thm: commutativity}
Let $\varphi:G_1\longrightarrow G_2$
be a Euclidean smooth Pansu differentiable map between two Carnot
groups. Fix \(p,k\in\mathbb N\) and \(i\geq1\) such that $i\in I_{p,k}$.
Then, for every $\alpha\in Z_i^{p,k-p}(G_2)$,
one has
\begin{equation}
\label{eq:Pansu-commutativity-mod-Bi}
\varphi_P^*\bigl(\Delta_i\alpha\bigr)
=
\Delta_i\bigl(\varphi_P^*\alpha\bigr)
\quad
\operatorname{mod}
B_i^{p+i,k+1-p-i}(G_1)\,.
\end{equation}
More precisely, if \(\omega_2\) is any representative of
\(\Delta_i\alpha\) and \(\omega_1\) is any representative of
\(\Delta_i(\varphi_P^*\alpha)\), then
\[
\varphi_P^*\omega_2-\omega_1
\in
B_i^{p+i,k+1-p-i}(G_1)\,.
\]
With this notation, the following diagram is commutative:
\[
\begin{tikzcd}
Z_i^{p,k-p}(G_2)
\arrow[r,"\Delta_i"]
\arrow[d,"\varphi_P^*"']
&
B_{i+1}^{p+i,k+1-p-i}(G_2)/
B_i^{p+i,k+1-p-i}(G_2)
\arrow[d,"\varphi_P^*"]
\\
Z_i^{p,k-p}(G_1)
\arrow[r,"\Delta_i"']
&
B_{i+1}^{p+i,k+1-p-i}(G_1)/
B_i^{p+i,k+1-p-i}(G_1)
\end{tikzcd}
\]
\end{theorem}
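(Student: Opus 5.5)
Fix $\alpha\in Z_i^{p,k-p}(G_2)$ and correction forms $z_{p+1},\dots,z_{p+i-1}$ on $G_2$ as in Definition~\ref{def: Z and B defined}. By Lemma~\ref{remark: characterising Z and B with d}, $\Delta_i\alpha$ is represented by
\[
\omega_2:=\pi^2_{p+i}\,d\Bigl(\alpha-\sum_{j=1}^{i-1}z_{p+j}\Bigr).
\]
The plan is to use the correction forms on $G_1$ constructed in the proof of Proposition~\ref{prop:Pansu-preserves-Zr-Br}, namely
\[
z'_{p+j}=\sum_{h=1}^{j}\Phi_{j-h}z_{p+h}-\Phi_j\alpha,\qquad 1\le j\le i-1 .
\]
Equivalently, $\varphi_P^*\alpha-\sum_j z'_{p+j}$ is the truncation $T:=\sum_{j=0}^{i-1}\pi^1_{p+j}\varphi^*\beta$, where $\beta:=\alpha-\sum_j z_{p+j}$. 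With these choices I will compute the representative $\omega_1:=\pi^1_{p+i}\,dT$ of $\Delta_i(\varphi_P^*\alpha)$ and compare it with $\varphi_P^*\omega_2$.

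\textbf{Main computation.} Write $\varphi^*\beta=T+R$, where $R:=\pi^1_{p+i}\varphi^*\beta+\Omega^{\ge p+i+1,k}(G_1)$. Naturality gives $dT=\varphi^*d\beta-dR$. Since $d\beta\in\Omega^{\ge p+i,k+1}(G_2)$ and $\varphi^*$ preserves the weight filtration with $\pi_{p+i}^1\circ\varphi^*=\varphi_P^*$ on weight-$(p+i)$ forms (Proposition~\ref{prop:weight-behaviour-pullbacks}), taking the weight $p+i$ component gives $\pi^1_{p+i}\varphi^*d\beta=\varphi_P^*\omega_2$. The weight $p+i$ component of $dR$ is $d_0\bigl(\pi^1_{p+i}\varphi^*\beta\bigr)$. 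Hence
\[
\omega_1=\varphi_P^*\omega_2-d_0 c,\qquad c:=\pi^1_{p+i}\varphi^*\beta=\Phi_i\alpha-\sum_{h=1}^{i-1}\Phi_{i-h}z_{p+h},
\]
and $d_0c\in B_1^{p+i,k+1-p-i}(G_1)\subseteq B_i^{p+i,k+1-p-i}(G_1)$. This settles the identity for this particular choice of representatives.

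\textbf{Independence of choices.} It remains to pass to arbitrary representatives $\omega_1,\omega_2$. Here I rely on the well-definedness of $\Delta_i$ modulo $B_i$, that is, on the independence from the choice of the $z$'s. This is part of the cited construction, namely \eqref{eq:image of B_r by Delta_r} and Proposition~\ref{prop: Delta con diff}. Two representatives of $\Delta_i\alpha$ therefore differ by an element of $B_i(G_2)$, and $\varphi_P^*$ maps this into $B_i(G_1)$ by Proposition~\ref{prop:Pansu-preserves-Zr-Br}. Two representatives of $\Delta_i(\varphi_P^*\alpha)$ differ by an element of $B_i(G_1)$ directly.

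\textbf{Commutative diagram.} The diagram follows from \eqref{eq:image of Z_r by Delta_r}, which places both images in $B_{i+1}$, together with Proposition~\ref{prop:Pansu-preserves-Zr-Br} applied to $B_{i+1}$ and $B_i$.

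I expect the main obstacle to be the bookkeeping in the main computation, specifically checking that no weight-$(p+i)$ contribution of $dR$ survives beyond $d_0c$. This holds because $d$ does not decrease weight and $R$ has weight at least $p+i$. If the choice-independence of $\Delta_i$ were not available, I would instead verify directly that changing the $z$'s alters $\omega_2$ by an element of $B_i$. This can be done by applying the $B$-characterisation of Lemma~\ref{remark: characterising Z and B with d} to the difference of two correction families.
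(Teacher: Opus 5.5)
Your proposal is correct and follows the paper's proof essentially step for step: it uses the same truncation $T=\sum_{j=0}^{i-1}\pi^1_{p+j}\varphi^*\beta$ of the classical pullback as the correction family on $G_1$, and the same weight-$(p+i)$ comparison giving $\varphi_P^*\omega_2-\omega_1=d_0\bigl(\pi^1_{p+i}\varphi^*\beta\bigr)\in B_1^{p+i,k+1-p-i}(G_1)\subseteq B_i^{p+i,k+1-p-i}(G_1)$. Your explicit passage to arbitrary representatives also fills in a step the paper leaves implicit; it uses the choice-independence of $\Delta_i$ modulo $B_i$ together with Proposition~\ref{prop:Pansu-preserves-Zr-Br}, or directly the $B$-characterisation applied to differences of correction families.
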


\begin{proof}

Let $\alpha\in Z_i^{p,k-p}(G_2)$.
By Definition~\ref{def: Z and B defined}, there exist forms $z_{p+j}\in\Omega^{p+j,k-p-j}(G_2)$ for $1\leq j\leq i-1$
such that
\[
d\left(\alpha-\sum_{j=1}^{i-1}z_{p+j}\right)
\in
\Omega^{\geq p+i,k+1}(G_2)\,.
\]
Set
\[
\omega_2
:=
\pi_{p+i}^2\left(d\left(\alpha-\sum_{j=1}^{i-1}z_{p+j}\right)\right).
\]
Then $\omega_2
=
d_i\alpha
-
\sum_{j=1}^{i-1}
d_{i-j}z_{p+j}$
and \(\omega_2\) represents \(\Delta_i\alpha\).
We now construct compatible correction forms for
\(\varphi_P^*\alpha\). 

Since the ordinary pullback preserves the filtration by weight,
\[
\varphi^*\left(\alpha-\sum_{j=1}^{i-1}z_{p+j}\right)-\sum_{j=0}^{i-1}
\pi_{p+j}^1\left(\varphi^*\left(\alpha-\sum_{j=1}^{i-1}z_{p+j}\right)\right)
\in
\Omega^{\geq p+i,k}(G_1)\,.
\]
Moreover, the ordinary pullback commutes with the exterior
derivative, and therefore
\[
d\left(\varphi^*\left(\alpha-\sum_{j=1}^{i-1}z_{p+j}\right)\right)
=
\varphi^*\left(d\left(\alpha-\sum_{j=1}^{i-1}z_{p+j}\right)\right)
\in
\Omega^{\geq p+i,k+1}(G_1)\,.
\]
Because \(d\) does not decrease weight, it follows that
\[
d\left(\sum_{j=0}^{i-1}
\pi_{p+j}^1\left(\varphi^*\left(\alpha-\sum_{j=1}^{i-1}z_{p+j}\right)\right)
\right)\in
\Omega^{\geq p+i,k+1}(G_1)\,.
\]

Moreover,
\[
\sum_{j=0}^{i-1}
\pi_{p+j}^1\left(\varphi^*\left(\alpha-\sum_{j=1}^{i-1}z_{p+j}\right)\right)
=\pi_p^1\varphi^\ast\alpha+\sum_{j=1}^{i-1}
\pi_{p+j}^1\left(\varphi^*\left(\alpha-\sum_{j=1}^{i-1}z_{p+j}\right)\right)=
\varphi_P^*\alpha
-
\sum_{j=1}^{i-1}z'_{p+j}
\]
for suitable $z'_{p+j}
\in
\Omega^{p+j,k-p-j}(G_1)$.

Consequently, these forms constitute an admissible family of
correction forms for \(\varphi_P^*\alpha\). Therefore,
\[
\omega_1
:=
\pi_{p+i}^1\left(d\sum_{j=0}^{i-1}
\pi_{p+j}^1\left(\varphi^*\left(\alpha-\sum_{j=1}^{i-1}z_{p+j}\right)\right)\right)
\]
represents $\Delta_i\bigl(\varphi_P^*\alpha\bigr)$.

Since
\[
\varphi^*\left(\alpha-\sum_{j=1}^{i-1}z_{p+j}\right)
=
\sum_{j=0}^{i-1}
\pi_{p+j}^1\left(\varphi^*\left(\alpha-\sum_{j=1}^{i-1}z_{p+j}\right)\right)+
\pi_{p+i}^1\left(\varphi^*\left(\alpha-\sum_{j=1}^{i-1}z_{p+j}\right)\right)+\Omega^{\geq p+i+1,k}(G_1)\,,
\]
applying \(d\) and taking the component of weight \(p+i\)
gives
\begin{align}
\label{eq:weight-pi-pullback-u}
\pi_{p+i}^1\left(d\left(\varphi^*\left(\alpha-\sum_{j=1}^{i-1}z_{p+j}\right)\right)\right)=
\omega_1+d_0\left(\pi_{p+i}^1\left(\varphi^*\left(\alpha-\sum_{j=1}^{i-1}z_{p+j}\right)\right)\right)\,.
\end{align}

On the other hand, since $d\varphi^\ast=\varphi^\ast d$
and the weight-preserving component of the classical pullback is the
Pansu pullback, we obtain
\begin{equation}
\label{eq:weight-pi-Pansu}
\pi_{p+i}^1\left(d\varphi^*\left(\alpha-\sum_{j=1}^{i-1}z_{p+j}\right)\right)
=\pi_{p+i}^1\left(\varphi^\ast d\left(\alpha-\sum_{j=1}^{i-1}z_{p+j}\right)\right)=\pi_{p+i}^1\big(\varphi^\ast\big(\omega_2+\Omega^{\ge p+i+1,k+1}(G_2)\big)\big)=
\varphi_P^*\omega_2\,.
\end{equation}
Combining
\eqref{eq:weight-pi-pullback-u} and
\eqref{eq:weight-pi-Pansu}, we find $\varphi_P^*\omega_2-\omega_1
\in
B_1^{p+i,k+1-p-i}(G_1)
\subseteq
B_i^{p+i,k+1-p-i}(G_1)$, as claimed in
\eqref{eq:Pansu-commutativity-mod-Bi}.
Finally, recall from \eqref{eq:image of Z_r by Delta_r} that representatives of $\Delta_i\alpha$, for $\alpha\in Z_i^{p,k-p}$, belong to $B_{i+1}^{p+i,k+1-p-i}$. This gives the horizontal arrows in the stated diagram. Proposition \ref{prop:Pansu-preserves-Zr-Br} gives the left-hand vertical arrow and, since it also preserves both $B_{i+1}^{p+i,k+1-p-i}$ and $B_i^{p+i,k+1-p-i}$, the induced right-hand vertical arrow. The identity just proved establishes the commutativity of the diagram. 
\end{proof}

\begin{corollary}
\label{cor: commutativity spectral}
    Let $\varphi\colon G_1\to G_2$ be a Pansu differentiable map between two Carnot groups with respective spectral complexes $\{(E_{i,j}^{\bullet,\bullet}(G_1),\Delta_i)\}_{i\in I_{\bullet,\bullet}}$ and $\{(E_{i,j}^{\bullet,\bullet}(G_2),\Delta_{i})\}_{i\in I_{\bullet,\bullet}}$, then the Pansu pullback commutes with the differentials $\Delta_i$ for each non trivial choice of weight, degree, and order $i$, i.e. the diagram
\begin{equation}\label{diagram commutativity}
\begin{tikzcd}
Z_i^{p,k-p}(G_2) /B_{m_2}^{p,k-p}(G_2)\arrow[r, "\Delta_i"] \arrow[d, "\varphi_P^\ast"'] 
  & Z_{l_2}^{p+i,k+1-p-i}(G_2)/B_i^{p+i,k+1-p-i}(G_2) \arrow[d, "\varphi_P^\ast"] \\
Z_i^{p,k-p}(G_1)/B_{m_1}^{p,k-p}(G_1) \arrow[r, "\Delta_i"'] 
  & Z_{l_1}^{p+i,k+1-p-i}(G_1)/B_i^{p+i,k+1-p-i}(G_1)
\end{tikzcd}
\end{equation}
commutes for any (non trivial) choice of $p,k,l_1,l_2,m_1,m_2\in\mathbb{N}$.
\end{corollary}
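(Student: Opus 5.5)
The plan is to deduce the corollary from Theorem~\ref{thm: commutativity} together with Proposition~\ref{prop:Pansu-preserves-Zr-Br}, by checking that every arrow in \eqref{diagram commutativity} is well defined and that the two composites agree on representatives.

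First, the vertical arrows. Proposition~\ref{prop:Pansu-preserves-Zr-Br} gives $\varphi_P^*Z_r(G_2)\subseteq Z_r(G_1)$ and $\varphi_P^*B_m(G_2)\subseteq B_m(G_1)$ for all $r,m$. To obtain a map $Z_i/B_{m_2}(G_2)\to Z_i/B_{m_1}(G_1)$ in the left column, I need $\varphi_P^*B_{m_2}(G_2)\subseteq B_{m_1}(G_1)$. When $m_1=m_2$ this is immediate. In general I will read ``non trivial choice'' as requiring the indices to be compatible: either $m_1=m_2$, or $m_2\ge m_1$, using the nesting $B_{m_1}\subseteq B_{m_2}$ of boundary spaces in the sense $B_1\subseteq B_2\subseteq\cdots$. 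The nesting goes the wrong way for $m_2>m_1$, so the honest requirement is $m_2\le m_1$; I will state that restriction explicitly. The right column is handled in the same way, with target $Z_{l_2}/B_i\to Z_{l_1}/B_i$, which requires $l_2\ge l_1$ via $Z_{l_2}\subseteq Z_{l_1}$. Under these conventions the maps \eqref{eq: def pullback on quotients}, composed with the natural maps between quotients, are well defined.

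Second, the horizontal arrows. The map $\Delta_i$ on the mixed quotients is defined by \eqref{eq: def representative-level map del Delta_r}. The key facts are \eqref{eq:image of Z_r by Delta_r}, which puts the image inside $B_{i+1}\subseteq Z_{l}$ for every $l$ (again by \cite[Lemma 4.1]{tripaldi2026spectralcomplexestruncatedmulticomplexes}), and \eqref{eq:image of B_r by Delta_r}, which sends $B_m$ into $B_i$. These two facts, together with the independence of the class modulo $B_i$ from the choice of correction forms $z_{p+j}$, make $\Delta_i$ well defined on both groups.

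Third, commutativity. Take $[\alpha]\in Z_i^{p,k-p}(G_2)/B_{m_2}^{p,k-p}(G_2)$ with representative $\alpha$. Going right and then down gives $[\varphi_P^*\omega_2]$, where $\omega_2$ represents $\Delta_i\alpha$. Going down and then right gives $[\omega_1]$, where $\omega_1$ represents $\Delta_i(\varphi_P^*\alpha)$. Theorem~\ref{thm: commutativity} gives $\varphi_P^*\omega_2-\omega_1\in B_i^{p+i,k+1-p-i}(G_1)$, which is exactly the relation in the target quotient. Since the result holds for any choice of representatives, it is independent of choices, and the diagram commutes.

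The main obstacle is bookkeeping: making precise which index combinations are admissible, so that the vertical maps between quotients with different indices on $G_1$ and $G_2$ exist. I will handle this by composing the maps induced by $\varphi_P^*$ with the canonical maps coming from the inclusions of cycle and boundary spaces.
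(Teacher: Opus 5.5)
Your proposal is correct and follows essentially the same route as the paper. You make $\Delta_i$ well defined on the mixed quotients via $\Delta_i(Z_i)\subseteq B_{i+1}$ and $\Delta_i(B_m)\subseteq B_i$, and you get commutativity modulo $B_i(G_1)$ from Theorem~\ref{thm: commutativity}. You also observe that the vertical arrows are genuine maps only when $m_2\le m_1$ and $l_2\ge l_1$, which is exactly the restriction the paper records, treating the diagram as schematic otherwise. Do delete the self-contradictory first attempt (``or $m_2\ge m_1$, using $B_{m_1}\subseteq B_{m_2}$'') and state the condition $m_2\le m_1$ directly.
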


\begin{proof}
\label{rem:arbitrary-indices-commutativity} The formulation of Theorem~\ref{thm: commutativity} is independent of
the auxiliary indices used to realise the spectral complexes.

Indeed, by
\cite[Proposition 4.2]{tripaldi2026spectralcomplexestruncatedmulticomplexes},
for every \(m,l\in\mathbb N\), the differential \(\Delta_i\) factors
as
\[
\frac{Z_i^{p,k-p}}{B_m^{p,k-p}}
\xrightarrow{\ \Delta_i\ }
\frac{B_{i+1}^{p+i,k+1-p-i}}
     {B_i^{p+i,k+1-p-i}}
\hookrightarrow
\frac{Z_l^{p+i,k+1-p-i}}
     {B_i^{p+i,k+1-p-i}}\,.
\]
Here, the operator \(\Delta_i\) vanishes on \(B_m^{p,k-p}\)
because $\Delta_i\bigl(B_m^{p,k-p}\bigr)
\subseteq
B_i^{p+i,k+1-p-i}$,
and hence \(B_m^{p,k-p}\) is sent to zero in the quotient by
\(B_i^{p+i,k+1-p-i}\).

It follows that, for arbitrary \(m_1,m_2,l_1,l_2\), both expressions
\[
\Delta_i\bigl(\varphi_P^*\alpha\bigr)
\qquad\text{and}\qquad
\varphi_P^*\bigl(\Delta_i\alpha\bigr)
\]
are well defined modulo
\(B_i^{p+i,k+1-p-i}(G_1)\), independently of the quotient
representatives and of the chosen indices \(m_1,m_2,l_1,l_2\).
Theorem~\ref{thm: commutativity} asserts that these two expressions
define the same class.

For arbitrary \(m_1,m_2,l_1,l_2\), however, the diagram
\eqref{diagram commutativity} should only be regarded as schematic.
In fact, the expression
\[
[\alpha]_{B_{m_2}(G_2)}
\longmapsto
[\varphi_P^*\alpha]_{B_{m_1}(G_1)}
\]
does not in general define a map unless $\varphi_P^*\bigl(B_{m_2}(G_2)\bigr)
\subseteq
B_{m_1}(G_1)$.
Similarly, the right-hand vertical arrow need not be defined on the
entire displayed quotient when \(l_1\neq l_2\).

If \(m_1=m_2\) and \(l_1=l_2\), then both vertical arrows are
well-defined and \eqref{diagram commutativity} is a genuine
commutative diagram. More generally, using the standard inclusions $B_m\subseteq B_{m+1}$ and 
$Z_{l+1}\subseteq Z_l$,
the diagram is genuine whenever $m_2\leq m_1$ and $l_2\geq l_1$.

\end{proof}

\subsection{Spectral complexes as subspaces}
Theorem \ref{thm: commutativity} and Corollary \ref{cor: commutativity spectral} establish commutativity at the level of the quotients defining the spectral complexes. For the applications to central extensions in Section 5, we need to express these results in terms of representatives belonging to subspaces of smooth differential forms. The fibrewise inner product alone does not provide a general orthogonal realisation of the quotients $Z_r^{p,k-p}/B_m^{p,k-p}$. An $L^2$-based approach would require additional choices of function spaces and domains for the differential operators; see Subsection \ref{subsect: Orthogonal realisations of the spectral complexes}. For our purposes, however, it suffices to consider spectral differentials acting on horizontal $1$-forms, whose images will be compared with left-invariant $2$-cocycles in Section 5. Two features make this case particularly simple. First, Rumin $1$-forms are precisely horizontal $1$-forms. Second, in the target bidegree of $\Delta_r$, the boundary space $B_r$ coincides with $B_1=\operatorname{Im}d_0$ (see Lemma \ref{lem:d0-injective-higher-weight-one-forms} and Proposition \ref{prop: Bi é B1 per le 2 forme}).

We first isolate the part of the construction that relies only on the fibrewise Hodge decomposition. This removes the $d_0$-exact component from any spectral quotient and, when the denominator is $B_1$, identifies the quotient with a subspace of Rumin forms.

\begin{lemma}[Harmonic representatives of the spectral quotients]\label{lem: Harmonic reduction of the spectral quotients}
    Let $G$ be a Carnot group, equipped with the fibrewise inner product fixed in Section 2. For every $r,m \geq 1$
    \begin{align*}
        Z_r^{p,k-p} &= B_1^{p,k-p} \oplus \big( Z_r^{p,k-p} \cap \ker \Box_0 \big)\\
        B_m^{p,k-p} &= B_1^{p,k-p} \oplus \big( B_m^{p,k-p} \cap \ker \Box_0 \big)\,.
    \end{align*}
    Therefore, the projection $\Pi_0$ induces an isomorphism 
    \begin{align*}
\frac{Z_r^{p,k-p}}{B_m^{p,k-p}}
\longrightarrow
\frac{Z_r^{p,k-p}\cap \ker \Box_0}{B_m^{p,k-p}\cap \ker \Box_0} \quad ,\quad 
[\alpha]
\mapsto
[\Pi_0\alpha]\,.
\end{align*}
In particular,
\begin{align*}
\frac{Z_r^{p,k-p}}{B_1^{p,k-p}}
\xrightarrow[]{\cong}
Z_r^{p,k-p} \cap \ker \Box_0\ \text{ where }\ [\alpha]\mapsto\Pi_0\alpha \,.
\end{align*}
\end{lemma}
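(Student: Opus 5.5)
The plan is to combine the fibrewise Hodge decomposition \eqref{eq: Hodge decomp Box_0} with two facts. First, $B_1^{p,k-p}=\operatorname{Im}d_0\cap\Omega^{p,k-p}(G)$ is contained in every $B_m^{p,k-p}$ and every $Z_r^{p,k-p}$; for the latter we use $B_1\subseteq B_m\subseteq Z_r$ from \cite[Lemma 4.1]{tripaldi2026spectralcomplexestruncatedmulticomplexes}. Second, $Z_r^{p,k-p}$ and $B_m^{p,k-p}$ are real vector subspaces of $\ker d_0=Z_1^{p,k-p}$. Since $\Pi_0$ preserves weight, the bigrading causes no trouble.

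First I would show that $\ker d_0\cap\Omega^{p,k-p}(G)=B_1^{p,k-p}\oplus(\ker\Box_0\cap\Omega^{p,k-p}(G))$. This follows from the proof of the Hodge decomposition, which gives $\ker d_0=\operatorname{Im}d_0\oplus\ker\Box_0$ fibrewise. Because $d_0,d_0^{-1},\Pi_0$ have bidegrees $(0,1),(0,-1),(0,0)$, the decomposition restricts to each weight. Concretely, every $\alpha\in\ker d_0$ can be written as $\alpha=d_0d_0^{-1}\alpha+\Pi_0\alpha$, and the two summands intersect only in $0$ because they are orthogonal.

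Next, let $S$ be either $Z_r^{p,k-p}$ or $B_m^{p,k-p}$. Take $\alpha\in S$. Since $S\subseteq\ker d_0$, we get $\alpha-\Pi_0\alpha=d_0(d_0^{-1}\alpha)\in B_1^{p,k-p}\subseteq S$. As $S$ is a vector space, $\Pi_0\alpha\in S\cap\ker\Box_0$. This gives $S=B_1\oplus(S\cap\ker\Box_0)$, and the sum is direct by the previous step. The one point needing care is $B_1\subseteq Z_r$ for every $r$. If one prefers not to cite it, it can be checked directly from Lemma~\ref{remark: characterising Z and B with d}: for $\alpha=d_0\beta$ with $\beta$ of weight $p$, set $z_{p+j}:=-\pi_{p+j}(d\beta)$. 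Then $\alpha-\sum_j z_{p+j}$ agrees with $d\beta$ up to weight $p+r-1$, so its differential lies in $\Omega^{\ge p+r,k+1}$, since $d(d\beta)=0$ and $d$ does not decrease weight.

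Finally, consider the map $[\alpha]\mapsto[\Pi_0\alpha]$. It is well defined because $\Pi_0$ sends $Z_r$ into $Z_r\cap\ker\Box_0$ and $B_m$ into $B_m\cap\ker\Box_0$, by the previous step. It is surjective because $\Pi_0$ is the identity on $\ker\Box_0$. For injectivity, suppose $\Pi_0\alpha\in B_m$. Then $\alpha=d_0d_0^{-1}\alpha+\Pi_0\alpha\in B_1+B_m=B_m$. When $m=1$ we have $B_1\cap\ker\Box_0=0$, which gives the last claim.

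I expect no real obstacle. The only delicate points are the inclusion $B_1\subseteq Z_r$ and remembering that $Z_r$ and $B_m$ are $\mathbb R$-subspaces, not $\mathcal C^\infty$-submodules. The argument uses only the vector space structure, so this is harmless.
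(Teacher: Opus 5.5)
Your proposal is correct and follows the same route as the paper: for $\alpha$ in $S=Z_r^{p,k-p}$ or $S=B_m^{p,k-p}$ (both contained in $\ker d_0$), you write $\alpha=\Pi_0\alpha+d_0d_0^{-1}\alpha$ and use $B_1^{p,k-p}\subseteq S$ to conclude that $\Pi_0\alpha\in S$. You also fill in details the paper leaves implicit, all soundly: directness of the sum via fibrewise orthogonality, well-definedness and bijectivity of the induced map on quotients (injectivity from $B_1+B_m=B_m$), and a self-contained check of $B_1\subseteq Z_r$ with $z_{p+j}=-d_j\beta$.
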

\begin{proof}
For every $\alpha\in Z_r^{p,k-p}$, the fibrewise Hodge decomposition gives $
\alpha=\Pi_0\alpha+d_0d_0^{-1}\alpha$. Since $d_0d_0^{-1}\alpha\in B_1^{p,k-p}\subseteq Z_r^{p,k-p}$, we also have $\Pi_0\alpha\in Z_r^{p,k-p}$. The same argument, applied to $\alpha\in B_m^{p,k-p}$, shows that $\Pi_0\alpha\in B_m^{p,k-p}$.
\end{proof}

\begin{corollary}\label{cor: pullback in the subspaces}
    Let $\varphi:G_1 \rightarrow G_2$ be a smooth Pansu differentiable map. Under the identifications of Lemma \ref{lem: Harmonic reduction of the spectral quotients} the map induced by the Pansu pullback is represented by
    \begin{align*}
\frac{Z_r^{p,k-p}(G_2)\cap E_0^k(G_2)}{B_m^{p,k-p}(G_2)\cap  E_0^k(G_2)} \rightarrow \frac{Z_r^{p,k-p}(G_1)\cap  E_0^k(G_1)}{B_m^{p,k-p}(G_1)\cap  E_0^k(G_1)}  \quad ,\quad 
[\alpha] \longmapsto \big[\Pi_0 \varphi_P^* \alpha\big]\,.
\end{align*}
In particular, when the denominator is $B_1^{p,k-p}(G_h)$, this is the map 
\begin{equation*}
    \Pi_0 \varphi_P^* : Z_r^{p,k-p}(G_2) \cap  E_0^k(G_2) \rightarrow Z_r^{p,k-p}(G_1) \cap E_0^k(G_1)\,.
\end{equation*}
\end{corollary}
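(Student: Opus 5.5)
The plan is to transport the well-defined quotient map of Proposition~\ref{prop:Pansu-preserves-Zr-Br} through the isomorphisms of Lemma~\ref{lem: Harmonic reduction of the spectral quotients} on both groups, and to compute the resulting composite explicitly. Write $\iota_h$ for the map $[\alpha]\mapsto[\Pi_0\alpha]$ from $Z_r^{p,k-p}(G_h)/B_m^{p,k-p}(G_h)$ to the harmonic quotient. Since $\ker\Box_0\cap\Omega^{p,k-p}=E_0^k\cap\Omega^{p,k-p}$, this is exactly the quotient appearing in the statement. The inverse $\iota_2^{-1}$ is induced by the inclusion of $Z_r\cap\ker\Box_0$ into $Z_r$. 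Indeed, for a harmonic $\alpha$ we have $\Pi_0\alpha=\alpha$, so $\iota_2$ composed with the inclusion is the identity; together with bijectivity of $\iota_2$ this identifies the inverse. Hence the represented map is $\iota_1\circ\varphi_P^*\circ\iota_2^{-1}$. It sends $[\alpha]$, with $\alpha\in Z_r^{p,k-p}(G_2)\cap E_0^k(G_2)$, first to $[\alpha]$, then to $[\varphi_P^*\alpha]$, and finally to $[\Pi_0\varphi_P^*\alpha]$, which is the claimed formula.

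Before that computation I will check that each step is well defined. Proposition~\ref{prop:Pansu-preserves-Zr-Br} gives $\varphi_P^*\alpha\in Z_r^{p,k-p}(G_1)$, with weight $p$ preserved by Proposition~\ref{prop:weight-behaviour-pullbacks}. The proof of Lemma~\ref{lem: Harmonic reduction of the spectral quotients} gives $\Pi_0\varphi_P^*\alpha=\varphi_P^*\alpha-d_0d_0^{-1}\varphi_P^*\alpha\in Z_r^{p,k-p}(G_1)$, and this form lies in $\ker\Box_0=E_0^k$ in weight $p$, because $\Pi_0$ preserves weight and degree. Independence from the representative follows from the same facts. If $\alpha-\alpha'\in B_m^{p,k-p}(G_2)\cap E_0^k$, then $\varphi_P^*(\alpha-\alpha')\in B_m^{p,k-p}(G_1)$ by Proposition~\ref{prop:Pansu-preserves-Zr-Br}. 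Applying $\Pi_0$ keeps it in $B_m^{p,k-p}(G_1)$, again by the decomposition of that lemma, since $B_1\subseteq B_m$. So the class $[\Pi_0\varphi_P^*\alpha]$ is unchanged.

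For the special case $m=1$, the denominator $B_1^{p,k-p}(G_h)\cap\ker\Box_0$ is $\operatorname{Im}d_0\cap\ker\delta_0=0$, by the orthogonal decomposition of the Hodge decomposition for $\Box_0$. The quotient is therefore $Z_r^{p,k-p}(G_h)\cap E_0^k(G_h)$ itself, and the map is literally $\Pi_0\varphi_P^*$. The argument is essentially bookkeeping. The only point needing care is that $\iota_2^{-1}$ is really the inclusion, so that no $\Pi_0$ is needed on the $G_2$ side. As noted in the Remark after Proposition~\ref{prop: commutativity at page 1}, $\varphi_P^*$ need not preserve $\ker\Box_0$, which is why $\Pi_0$ is needed on the $G_1$ side.
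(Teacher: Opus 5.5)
Your proposal is correct and follows essentially the same argument as the paper. Both proofs use that $\varphi_P^*$ and $\Pi_0$ preserve $Z_r$ and $B_m$, and that $\varphi_P^*\alpha-\Pi_0\varphi_P^*\alpha\in B_1^{p,k-p}(G_1)\subseteq B_m^{p,k-p}(G_1)$. Your explicit identification of $\iota_2^{-1}$ with the map induced by the inclusion, and your check that $B_1\cap\ker\Box_0=0$ in the case $m=1$, spell out steps that the paper leaves implicit.
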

\begin{proof}
    By Proposition \ref{prop:Pansu-preserves-Zr-Br}, the Pansu pullback preserves the relevant cycle and boundary spaces. By Lemma \ref{lem: Harmonic reduction of the spectral quotients}, the projection $\Pi_0$ also preserves each of these spaces. Hence $\Pi_0^1\varphi_P^*$ maps harmonic cycles to harmonic cycles and harmonic boundaries to harmonic boundaries, so the displayed map is well defined. Moreover, for $\alpha\in Z_r^{p,k-p}(G_2)$,
\[
\varphi_P^*\alpha-\Pi_0\varphi_P^*\alpha
\in B_1^{p,k-p}(G_1)
\subseteq B_m^{p,k-p}(G_1).
\]
Thus the projected form represents the same quotient class as the Pansu pullback.
\end{proof}

We next express the cycle and boundary conditions, as well as the spectral differentials, in terms of the homogeneous components of the Rumin differential. We write $d_c=\sum_{j\geq1}d_c^j$, where only finitely many components are nonzero. The following formulation combines \cite[Propositions 3.4, 3.6 and 3.8]{tripaldi2026spectralcomplexestruncatedmulticomplexes} with Proposition \ref{prop:Pansu-preserves-Zr-Br}.

\begin{proposition}\label{prop: formulating main thm with Rumin differentials}
    Let $\varphi\colon G_1\to G_2$ be a Pansu differentiable map between two Carnot groups with respective spectral complexes $\{(E_{i,j}^{\bullet,\bullet}(G_1),\Delta_i)\}_{i\in I_{\bullet,\bullet}}$ and $\{(E_{i,j}^{\bullet,\bullet}(G_2),\Delta_i)\}_{i\in I_{\bullet,\bullet}}$. Let $\alpha \in \ker \Box_0$, then (assuming $r\ge 2$) 
   \begin{align*}
       \alpha\in Z_r^{p,k-p}(G_h)\Longleftrightarrow &\exists\,  \omega_{p+i}\in\Omega^{p+i,k-p-i}(G_h)\cap\ker\Box_0 \text{ with }i=1,\ldots,r-2\text{ such that }\\
       &\ d_c^i\alpha=\sum_{j=1}^{i-1}d_c^{i-j}\omega_{p+j}\ \text{ for each }i=1,\ldots,r-1\,.\\
       \alpha\in B_r^{p,k-p}(G_h)\Longleftrightarrow&\exists\, c_{p-r+i}\in\Omega^{p-r+i,k-1-p+r-i}(G_h)\cap\ker\Box_0\text{ with }i=1,\ldots,r-1\text{ such that }\\
       & d_c^ic_{p-r+1}=\sum_{j=1}^{i-1}d_c^{i-j}c_{p-r+1+j}\text{ for each }i=1,\ldots,r-2\text{ and }\\
       &\alpha=d_c^{r-1}c_{p-r+1}-\sum_{i=1}^{r-2}d_c^{r-1-i}c_{p-r+1+i}\,.
   \end{align*}
   Hence, the property $\varphi_P^\ast(Z_r^{p,k-p}(G_2))\subset Z_r^{p,k-p}(G_1)$ can be expressed as
   \begin{align*}
       \text{given }\alpha\in Z_r^{p,k-p}(G_2)\text{ there exist }&\xi_{p+i}\in\Omega^{p+i,k-p-i}(G_1)\cap\ker\Box_0\text{ with } i=1,\ldots,r-2\\ \text{ such that }
       d_c^i \Pi_0\varphi_P^\ast\alpha&=\sum_{j=1}^{i-1}d_c^{i-j}\xi_{p+j}\ \text{ for each }i=1,\ldots,r-1\,.
   \end{align*}

   Likewise, the fact that $\varphi_P^\ast(B_r^{p,k-p}(G_2))\subset B_r^{p,k-p}(G_1)$ can be expressed as
   \begin{align*}
       \text{given }\alpha\in B_r^{p,k-p}(G_2)\text{ there exist }\gamma_{p-r+i}\in\Omega^{p-r+i,k-1-p+r-i}(G_1)\cap\ker\Box_0\text{ with }i=1,\ldots,r-1
       \text{ such that }\\d_c^i\gamma_{p-r+1}=\sum_{j=1}^{i-1}d_c^{i-j}\gamma_{p-r+1+j}\text{ for each }i=1,\ldots,r-2\text{ and }\Pi_0\varphi_P^\ast\alpha=d_c^{r-1}\gamma_{p-r+1}-\sum_{i=1}^{r-2}d_c^{r-1-i}\gamma_{p-r+1+i}\,.
   \end{align*}
   Finally, given an arbitrary $\alpha\in Z_r^{p,k-p}(G_h) \cap \ker\Box_0$ for each $h=1,2$, we have that
   \begin{align*}
       \Delta_r\alpha=d_c^r\alpha-\sum_{i=2}^{r-1}d_c^i\omega_{p+r-i}+B_r^{p+r,k+1-p-r}(G_h)
   \end{align*}
   where $\omega_{p+r-i}\in\ker\Box_0\cap\Omega^{p+r-i,k-p-r+i}(G_h)$ such that $d_c^j\alpha=\sum_{i=1}^{j-1}d_c^i\omega_{p+j-i}$ for $j=1,\ldots,r-1$.
\end{proposition}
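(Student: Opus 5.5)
The plan is to treat the first part, the two characterisations of $Z_r$ and $B_r$ for harmonic $\alpha$ and the formula for $\Delta_r$, as a translation of \cite[Propositions 3.4, 3.6 and 3.8]{tripaldi2026spectralcomplexestruncatedmulticomplexes} into the present notation. The remaining two statements are then read off from Proposition~\ref{prop:Pansu-preserves-Zr-Br} combined with Lemma~\ref{lem: Harmonic reduction of the spectral quotients}.

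For the characterisations, I would argue inductively on the weight. Start with $\alpha\in\ker\Box_0\cap Z_r^{p,k-p}$ and correction forms $z_{p+j}$ as in Definition~\ref{def: Z and B defined}. Decompose each $z_{p+j}$ via the Hodge decomposition \eqref{eq: Hodge decomp Box_0 degree} as
\[
z_{p+j}=\omega_{p+j}+d_0(\cdot)+\delta_0(\cdot).
\]
The $d_0$-exact part can be discarded, since it only changes $d(\alpha-\sum z)$ by $d$ of a $d_0$-exact term, which can be reabsorbed at higher weight. The $\delta_0$-part is forced by the equation $d_0 z_{p+n}=d_n\alpha-\sum_{i=1}^{n-1}d_iz_{p+n-i}$: it equals $d_0^{-1}$ of the right-hand side. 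Substituting this recursively produces exactly the operators $\partial_r$ of Definition~\ref{def: operators curly}. Solvability of the equation at step $n$ requires the right-hand side to lie in $\operatorname{Im}d_0$, that is, its $\Pi_0$-component must vanish. With the recursion built in, this is precisely the condition $\Pi_0\partial_n\alpha=\sum\Pi_0\partial_{n-j}\omega_{p+j}$, which reads $d_c^n\alpha=\sum_j d_c^{n-j}\omega_{p+j}$.

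The step I expect to be the main obstacle is verifying that the mixed terms $\partial_{n-j}$ applied to the $\delta_0$-corrections cancel correctly. This is exactly the content of the cited propositions, and I would invoke them rather than redo the bookkeeping. The $B_r$ statement is the dual version, applied to the chain $c_{p-r+1},\dots,c_p$, after noting via Lemma~\ref{lem: Harmonic reduction of the spectral quotients} that $\alpha$ may be replaced by $\Pi_0\alpha$ modulo $B_1$. The $\Delta_r$ formula then follows from Proposition~\ref{prop: Delta con diff}: the representative $d_r\alpha-\sum d_iz_{p+r-i}$ becomes, after the same substitution and projection by $\Pi_0$ (modulo $B_1\subseteq B_r$), the expression $d_c^r\alpha-\sum d_c^i\omega_{p+r-i}$.

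For the pullback statements, take $\alpha\in Z_r^{p,k-p}(G_2)$. Proposition~\ref{prop:Pansu-preserves-Zr-Br} gives $\varphi_P^*\alpha\in Z_r^{p,k-p}(G_1)$. By Lemma~\ref{lem: Harmonic reduction of the spectral quotients}, the form $\Pi_0\varphi_P^*\alpha$ is a harmonic element of $Z_r^{p,k-p}(G_1)$. Applying the characterisation just proved on $G_1$ yields the harmonic forms $\xi_{p+i}$, and the same procedure with $B_r$ yields the $\gamma$'s. Corollary~\ref{cor: pullback in the subspaces} guarantees that $\Pi_0\varphi_P^*$ represents the induced map, so nothing further is needed.
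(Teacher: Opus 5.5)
Your proposal is correct and follows the paper's route. The paper also obtains the harmonic characterisations of $Z_r$, $B_r$ and $\Delta_r$ by quoting \cite[Propositions 3.4, 3.6 and 3.8]{tripaldi2026spectralcomplexestruncatedmulticomplexes}, then reads off the pullback statements from Proposition~\ref{prop:Pansu-preserves-Zr-Br}, with Lemma~\ref{lem: Harmonic reduction of the spectral quotients} ensuring that $\Pi_0\varphi_P^*\alpha$ is again a harmonic cycle (resp.\ boundary). The one point your sketch glosses over is that $d_n\alpha-\sum_{i=1}^{n-1}d_iz_{p+n-i}$ is automatically $d_0$-closed, by $\sum_{a+b=n}d_ad_b=0$ and the earlier equations; this is what makes membership in $\operatorname{Im}d_0$ equivalent to the vanishing of its $\Pi_0$-component.
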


\begin{remark}
    The correction forms $\xi_{p+j}$ on $G_1$ should not, in general, be identified with the Pansu pullbacks of the original correction forms on $G_2$. The Pansu pullback need not preserve elements in $\ker\Box_0$. The statements above require only the existence of suitable correction forms that belong to $\ker\Box_0$.
\end{remark}

We now specialise these descriptions to horizontal $1$-forms. In this degree, the harmonic correction forms of higher weight vanish, and the target boundary spaces reduce to $B_1$. The algebraic fact underlying both simplifications is the following.

\begin{lemma}\label{lem:d0-injective-higher-weight-one-forms}
Let $G$ be a Carnot group with stratified Lie algebra $\mathfrak g=V_1\oplus\cdots\oplus V_s$.
Then, for every $p\ge 2$, the map
$$d_0\colon
\Omega^{p,1-p}(G)
\longrightarrow
\Omega^{p,2-p}(G)$$
is injective. Consequently $E_0^1 = \Omega^{1,0}(G)$.
\end{lemma}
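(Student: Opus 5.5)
Since $d_0$ is $\mathcal C^\infty(G)$-linear and acts fibrewise, the statement reduces to a purely algebraic claim on left-invariant covectors. We will show that $d\colon V_p^\ast\to\bigwedge^2\mathfrak g^\ast$ is injective for $p\ge2$, where $\bigwedge^{p,1-p}\mathfrak g^\ast=V_p^\ast$. Injectivity then passes to $\Omega^{p,1-p}(G)=\mathcal C^\infty(G)\otimes V_p^\ast$. Indeed, writing $\alpha=\sum_l f_l\theta_l$ with $\theta_l$ a basis of $V_p^\ast$, we have $d_0\alpha=\sum_l f_l\,d\theta_l$, and this vanishes pointwise exactly when $\sum_l f_l(x)\theta_l$ lies in the kernel of the linear map $d|_{V_p^\ast}$ at each $x$.

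For the algebraic step, let $\theta\in V_p^\ast$ with $d\theta=0$, and regard $\theta$ as a covector on $\mathfrak g$ vanishing on $V_q$ for $q\neq p$. Formula \eqref{eq: formula for d_0} gives $d\theta(X,Y)=-\theta([X,Y])$, so $d\theta=0$ means that $\theta$ annihilates $[\mathfrak g,\mathfrak g]$. For a stratification, $V_p=[V_1,V_{p-1}]\subseteq[\mathfrak g,\mathfrak g]$ whenever $p\ge2$. Hence $\theta$ vanishes on $V_p$, and therefore $\theta=0$. This is the only point where stratifiability, namely the generation property of $V_1$, is used. It is the key step, although it is not difficult.

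For the consequence $E_0^1=\Omega^{1,0}(G)$, we decompose $E_0^1$ by weight. For $p\ge2$, any $\alpha\in E_0^1\cap\Omega^{p,1-p}(G)$ satisfies $d_0\alpha=0$, and hence $\alpha=0$ by injectivity. For weight one, we must show that $\Omega^{1,0}(G)\subseteq\ker d_0\cap\ker\delta_0$. Kernel of $d_0$: if $\theta\in V_1^\ast$ then $d\theta(X,Y)=-\theta([X,Y])$, and $[X,Y]$ has no $V_1$ component, so $d\theta=0$. Kernel of $\delta_0$: $\delta_0$ maps $1$-forms to $0$-forms, while $d_0$ vanishes on functions because functions have weight $0$ and $d_0 f=0$ by Lemma \ref{lem: decomposing d according to weight increase}. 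Thus $\operatorname{Im}(d_0\colon\Omega^0\to\Omega^1)=0$, and so $\delta_0=0$ on $\Omega^1$. Combining these gives $E_0^1=\Omega^{1,0}(G)$.
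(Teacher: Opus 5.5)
Your proof is correct and follows the paper's argument. Both reduce to the fibrewise statement that a covector in $V_p^\ast$ annihilated by $d_0$ must vanish on $[V_1,V_{p-1}]=V_p$, which you phrase equivalently via $V_p\subseteq[\mathfrak g,\mathfrak g]$; you also spell out the weight-one check ($d_0=0$ on $V_1^\ast$ and $\delta_0=0$ on $1$-forms) behind $E_0^1=\Omega^{1,0}(G)$, which the paper leaves implicit.
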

\begin{proof}
    Let $\alpha \in \Omega^{p,1-p}(G)$ be such that $d_0 \alpha = 0$. Fix $x \in G$ and use the left translation to regard $\alpha_x$ as an element in $\bigwedge\nolimits^{p,1-p}\mathfrak g^* = V_p^*$. For every $X_0 \in V_1$ and $X_1 \in V_{p-1}$, we have
    \begin{equation*}
        0 = (d_0\alpha)_x(X_0, X_1) = -\alpha_x([X_0,X_1])\,.
    \end{equation*}
    Since $[V_1, V_{p-1}] = V_p$, the element $\alpha_x \in V_p^*$ vanishes on $V_p$ and hence $\alpha_x = 0$ for any $x \in G$, and so $\alpha = 0$.
\end{proof}

\begin{proposition}\label{prop: Bi é B1 per le 2 forme}
    Let $G$ be a Carnot group. For every $i \geq 2$,
    \begin{equation*}
        Z_i^{1,0} = \{\alpha \in \Omega^{1,0}(G) \mid d_c^j\alpha = 0 \text{ for } 1 \leq j \leq i-1\}
    \end{equation*}
    and 
    \begin{equation*}
        B_i^{i+1,1-i} =  B_1^{i+1,1-i} 
    \end{equation*}
Consequently, for each $i\in I_{1,1}$ the expression of the spectral complex differential simplifies to
\begin{align*}
    \Delta_i\colon Z_i^{1,0}/B_1^{1,0}\longrightarrow Z_m^{1+i,1-i}/B_1^{1+i,1-i} \ \text{ for some }m\in I_{1+i,2}\,,
\end{align*}
where $m$ is an admissible index for the target term.
\end{proposition}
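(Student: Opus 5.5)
Both identities will come from Lemma~\ref{lem:d0-injective-higher-weight-one-forms}, which says $d_0$ is injective on $1$-forms of weight $\ge 2$. Everything happens in degree $1$ (for $Z$) or degree $2$ with a degree-$1$ primitive (for $B$), so the relevant correction forms are $1$-forms of weight $\ge 2$.

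\textbf{Cycle spaces.} Every $\alpha\in\Omega^{1,0}(G)$ is automatically in $\ker\Box_0$. Indeed, $\Omega^{0}$ has only weight $0$, so $\operatorname{Im} d_0\cap\Omega^{1,0}=0$, and $d_0\alpha=0$ because $\theta\in V_1^*$ vanishes on brackets. Hence $E_0^1=\Omega^{1,0}(G)$, as already noted in the lemma.

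I will apply the first characterisation in Proposition~\ref{prop: formulating main thm with Rumin differentials} with $p=k=1$. The harmonic correction forms $\omega_{1+j}$ then lie in $\Omega^{1+j,-j}(G)\cap\ker\Box_0$ with $1+j\ge 2$. Such forms are killed by $d_0$, so by Lemma~\ref{lem:d0-injective-higher-weight-one-forms} they vanish. Substituting $\omega_{1+j}=0$ collapses the conditions to $d_c^i\alpha=0$ for $1\le i\le r-1$, which gives the claimed description of $Z_i^{1,0}$.

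As a cross-check one can argue directly from Lemma~\ref{remark: characterising Z and B with d}. Decompose $z_{1+j}=d_0^{-1}d_0z+\,$(harmonic part $=0$). The corrections can always be taken in $\operatorname{Im}\delta_0$, and the recursion $d_0z_{1+n}=d_n\alpha-\sum d_{n-i}z_{1+i}$ is solvable exactly when $\Pi_0$ of the right-hand side vanishes. This matches $\Pi_0\partial_n\alpha=d_c^n\alpha$ through Definition~\ref{def: operators curly}.

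\textbf{Boundary spaces.} Let $\beta\in B_i^{i+1,1-i}$. By Definition~\ref{def: Z and B defined}, $\beta=\sum_{j=0}^{i-1}d_jc_{i+1-j}$, where the $c_{i+1-j}$ are $1$-forms of weights $2,\dots,i+1$. The side equations are $0=\sum_{j=l}^{i-1}d_{j-l}c_{i+1-j}$ for $1\le l\le i-1$.

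I will show by downward induction on weight that $c_2,\dots,c_i$ can be replaced by $0$. The equation with $l=i-1$ reads $d_0c_2=0$, so $c_2=0$ by the lemma. Feeding this into the equation with $l=i-2$ leaves $d_0c_3=0$, so $c_3=0$, and so on up to $c_i=0$. What remains is $\beta=d_0c_{i+1}\in B_1^{i+1,1-i}$. The reverse inclusion $B_1\subseteq B_i$ is standard (take all other $c$'s zero). Hence $B_i^{i+1,1-i}=B_1^{i+1,1-i}$.

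\textbf{Conclusion.} Combining the two identities with \eqref{eq: Delta_r on mixed quotients} gives the simplified form of $\Delta_i$. The source has $m=1$ because at $k=1$, $p=1$ there is no $\ell$ with $\ell\in I_{1-\ell,0}$ except the weight-$0$ slot, where $B_\ell^{1,0}=B_1^{1,0}=0$. The target denominator becomes $B_1^{1+i,1-i}$.

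The main obstacle I expect is the indexing of the side equations in the boundary condition. I need to check that they really form a triangular system in which each step isolates exactly $d_0c_{w}$ for the lowest remaining weight $w\le i$. In particular, the weight-$1$ form $c_{i+1-(i-1)}=c_2$ must be the first term to appear. If the ordering is off by one, I will redo the induction starting from the highest $l$ instead.
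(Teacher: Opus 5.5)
Your proposal is correct and follows the paper's proof: the description of $Z_i^{1,0}$ comes from Proposition~\ref{prop: formulating main thm with Rumin differentials} with $p=k=1$, where all harmonic correction forms vanish by Lemma~\ref{lem:d0-injective-higher-weight-one-forms}, and $B_i^{i+1,1-i}=B_1^{i+1,1-i}$ comes from the same triangular induction ($l=i-1$ gives $d_0c_2=0$, then $l=i-2$ gives $d_0c_3=0$, and so on). Your indexing is already right, so the worry in your last paragraph is unfounded, apart from the slip that $c_2$ is a $1$-form of weight $2$ rather than a weight-$1$ form; your cross-check via the recursion for $\partial_n$ is also a valid self-contained argument for the cycle part.
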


\begin{proof} 
By Lemma \ref{lem:d0-injective-higher-weight-one-forms}, for every $p \geq 2$, $E_0^1 \cap \Omega^{p,1-p} = \{0\}$. Thus, all correction forms in Proposition \ref{prop: formulating main thm with Rumin differentials} vanish when $p=k=1$.

By the standard set of inclusions $B_i^{p,q}\subseteq B_{i+1}^{p,q}$ shown in \cite[Lemma 4.1]{tripaldi2026spectralcomplexestruncatedmulticomplexes}, we immediately get that the inclusion $B_1^{1+i,1-i}\subseteq B_i^{1+i,1-i}$ holds for every $i\ge 1$.

The reverse inclusion is also immediate when \(i=1\), so assume that \(i\geq2\). Given $\alpha\in B_i^{1+i,1-i}$, there exist 1-forms $c_{i+1-j}\in\Omega^{i+1-j,j-i}(G)$ for $0\le j\le i-1$, such that
\begin{equation}\label{eq:Bi-degree-two-expression}
\alpha
=
\sum_{j=0}^{i-1}d_jc_{i+1-j}
=
d_0c_{i+1}+d_1c_i+\cdots+d_{i-1}c_2,
\end{equation}
and
\begin{equation}\label{eq:Bi-degree-two-relations}
0
=
\sum_{j=l}^{i-1}d_{j-l}c_{i+1-j}\ ,
\
\text{for every }1\leq l\leq i-1\,.
\end{equation}
Taking $l=i-1$ in
\eqref{eq:Bi-degree-two-relations}, we obtain $d_0c_2=0$ where $c_2\in\Omega^{2,-1}(G)$, and so by
Lemma~\ref{lem:d0-injective-higher-weight-one-forms} we get that $c_2=0$.
Taking now $l=i-2$ in \eqref{eq:Bi-degree-two-relations}, we obtain $d_0c_3+d_1c_2=d_0c_3=0$, which also implies $c_3=0$ since $c_3\in\Omega^{3,-2}(G)$.

Proceeding inductively, suppose that $c_2=\cdots=c_{r-1}=0$ for some $3\le r\le i$. Then taking $l=i+1-r$ in 
\eqref{eq:Bi-degree-two-relations}, all the terms involving $c_2,\ldots,c_{r-1}$ vanish, and we are left with $d_0c_r=0$, where $c_r\in\Omega^{r,1-r}(G)$ and since $r\ge 2$, Lemma \ref{lem:d0-injective-higher-weight-one-forms} implies $c_r=0$. 

Therefore $c_2=c_3=\cdots=c_i=0$, and so by \eqref{eq:Bi-degree-two-expression} we have that $\alpha=d_0c_{i+1}$, that is $\alpha\in  B_1^{1+i,1-i}$, i.e. $B_i^{1+i,1-i}\subseteq B_1^{1+i,1-i}$.
\end{proof}

\begin{corollary}\label{cor: Delta_i on 1-forms}
    Under the identifications
\begin{align*}
    Z_i^{1,0}\big/ B_1^{1,0} = Z_i^{1,0}\ \text{ and }\
    Z_m^{i+1,1-i}\big / B_1^{i+1,1-i} &\cong  Z_m^{i+1,1-i} \cap E_0^2\,,
\end{align*}
the differential 
\begin{align*}
    \Delta_i\colon Z_i^{1,0}/B_1^{1,0}\longrightarrow Z_m^{1+i,1-i}/B_1^{1+i,1-i}
\end{align*}
is represented by
\begin{align*}
    d_c^i \colon Z_i^{1,0}\longrightarrow Z_m^{1+i,1-i}\cap E_0^2\,.
\end{align*}
\end{corollary}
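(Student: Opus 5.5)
We will verify the two identifications and then compute a representative of $\Delta_i$ explicitly. For the source, $B_1^{1,0}=d_0\bigl(\Omega^{1,-1}(G)\bigr)=\{0\}$, since there are no $0$-forms of weight $1$ (functions have weight $0$). Hence $Z_i^{1,0}/B_1^{1,0}=Z_i^{1,0}$ literally. For the target, Lemma~\ref{lem: Harmonic reduction of the spectral quotients} with $r=m$ and denominator $B_1$ gives $Z_m^{i+1,1-i}/B_1^{i+1,1-i}\cong Z_m^{i+1,1-i}\cap\ker\Box_0$ via $[\beta]\mapsto\Pi_0\beta$, and $\ker\Box_0=E_0^2$ in degree two. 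By Proposition~\ref{prop: Bi é B1 per le 2 forme}, the denominator $B_i^{i+1,1-i}$ of the target of $\Delta_i$ is exactly $B_1^{i+1,1-i}$, so this identification applies to the codomain.

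It remains to show that for $\alpha\in Z_i^{1,0}$ the harmonic representative of $\Delta_i\alpha$ is $d_c^i\alpha=\Pi_0\partial_i\alpha$. We take the correction forms from Definition~\ref{def: Z and B defined}: $z_{1+j}\in\Omega^{1+j,-j}(G)$ with $d_0\alpha=0$ and $d_n\alpha=\sum_{l=0}^{n-1}d_l z_{1+n-l}$ for $n\le i-1$. We then show inductively that $z_{1+j}=-d_0^{-1}\partial_j\alpha$ modulo $\ker d_0$. Since $d_0$ is injective on $\Omega^{p,1-p}$ for $p\ge2$ (Lemma~\ref{lem:d0-injective-higher-weight-one-forms}), the $z$'s are in fact uniquely determined, and the only freedom is irrelevant. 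With this choice the equations read $d_0z_{1+n}=\partial_n\alpha - (\text{terms already matched})$, and they are solvable precisely because $\Pi_0\partial_n\alpha=d_c^n\alpha=0$ for $n<i$, using the description of $Z_i^{1,0}$ in Proposition~\ref{prop: Bi é B1 per le 2 forme}. Equivalently, the recursion for $\partial_r$ in Definition~\ref{def: operators curly} is exactly the recursion solving these equations with $z_{1+j}=-d_0^{-1}\partial_j\alpha$. Also $\partial_n\alpha\in\operatorname{Im}d_0$ for $n<i$, because $\partial_n\alpha$ is $d_0$-closed (from the multicomplex relations) with vanishing harmonic part and, by construction, zero $\operatorname{Im}\delta_0$ part.

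Substituting into Proposition~\ref{prop: Delta con diff}, we get
\[
\Delta_i\alpha=\Bigl[d_i\alpha-\sum_{j=1}^{i-1}d_{i-j}z_{1+j}\Bigr]=\Bigl[d_i\alpha-\sum_{j=1}^{i-1}d_{i-j}d_0^{-1}\partial_j\alpha\Bigr]=[\partial_i\alpha].
\]
Applying $\Pi_0$ yields $d_c^i\alpha$. Since $\partial_i\alpha-\Pi_0\partial_i\alpha\in d_0d_0^{-1}\partial_i\alpha+d_0^{-1}d_0\partial_i\alpha$, we must also check that $\partial_i\alpha\in\ker d_0$. This follows from the multicomplex identities together with $\partial_n\alpha\in\operatorname{Im}d_0$ for $n<i$; alternatively it is automatic because $\Delta_i\alpha$ lies in $B_{i+1}\subseteq Z_1$ by \eqref{eq:image of Z_r by Delta_r}. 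Then $\partial_i\alpha-d_c^i\alpha\in B_1$, which is exactly the representative statement needed.

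The main obstacle is the bookkeeping in this inductive identification of the correction forms with $-d_0^{-1}\partial_j\alpha$, in particular the sign conventions and the verification that each $\partial_n\alpha$ ($n<i$) lies in $\operatorname{Im}d_0$. We would first try to quote it directly from \cite[Propositions 3.4, 3.6, 3.8]{tripaldi2026spectralcomplexestruncatedmulticomplexes}, as already packaged in the last formula of Proposition~\ref{prop: formulating main thm with Rumin differentials}. That formula gives $\Delta_i\alpha=d_c^i\alpha-\sum d_c^{j}\omega_{1+i-j}+B_i$, and all harmonic correction forms $\omega$ vanish since $E_0^1\cap\Omega^{p,1-p}=0$ for $p\ge2$. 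This yields the statement immediately, and the inductive computation above serves only as a check.
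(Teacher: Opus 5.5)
Your proposal is correct, and its primary route is the paper's own proof. That route uses $B_1^{1,0}=0$, Lemma~\ref{lem: Harmonic reduction of the spectral quotients} for the target, the equality $B_i^{i+1,1-i}=B_1^{i+1,1-i}$, and the last formula of Proposition~\ref{prop: formulating main thm with Rumin differentials}, in which all harmonic corrections vanish because $E_0^1$ is concentrated in weight $1$. Your supplementary inductive check has a sign slip: the definition of $Z_i^{1,0}$ gives $d_0z_{1+j}=\partial_j\alpha$, so $z_{1+j}=+d_0^{-1}\partial_j\alpha$, not $-d_0^{-1}\partial_j\alpha$; your displayed substitution already uses the correct $+$ sign, so the conclusion is unaffected.
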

\begin{proof}
    The first identification follows from $B_1^{1,0} = 0$ and the second is given by Lemma \ref{lem: Harmonic reduction of the spectral quotients}. For $\alpha \in Z_i^{1,0}$, the Rumin expression for $\Delta_i$ contains no higher-weight harmonic correction and hence $\Delta_i([\alpha]) = [d_c^i \alpha]$. Since $B_i^{i+1, 1-i} = B_1^{i+1, 1-i}$ and $d_c^i \alpha$ is harmonic the claim follows.
\end{proof}

The Pansu pullback preserves horizontal $1$-forms, so we do not need to project by $\Pi_0$. In the target, the induced map is represented by $\Pi_0 \varphi_P^*$ as in Corollary \ref{cor: pullback in the subspaces}. Theorem \ref{thm: commutativity} therefore takes the following form.

\begin{corollary}\label{cor: pullback commuta con diff rumin 1-forms}
    Let $\varphi : G_1 \rightarrow G_2$ be a smooth Pansu differentiable map. For every $\alpha \in Z_r^{1,0}(G_2)$, one has
    \begin{equation*}
        \Pi_0 \varphi_P^* d_c^r \alpha = d_c^r \varphi_P^* \alpha \,.
    \end{equation*}
    Equivalently, there exist $\beta_r \in \Omega^{r+1,-r}(G_1)$ such that 
    \begin{equation*}
        \varphi_P^* d_c^r \alpha - d_c^r \varphi_P^*\alpha =d_0 \beta_r 
    \end{equation*}
\end{corollary}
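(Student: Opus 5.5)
The plan is to specialise Theorem~\ref{thm: commutativity} to $p=k=1$, and then read the resulting class identity through the harmonic identifications of Corollary~\ref{cor: Delta_i on 1-forms} and Corollary~\ref{cor: pullback in the subspaces}.

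First, take $\alpha\in Z_r^{1,0}(G_2)$. By Lemma~\ref{lem:d0-injective-higher-weight-one-forms}, $\alpha$ is horizontal, and so is $\varphi_P^*\alpha$, since the Pansu pullback preserves weight. By Proposition~\ref{prop:Pansu-preserves-Zr-Br}, we also have $\varphi_P^*\alpha\in Z_r^{1,0}(G_1)$. Corollary~\ref{cor: Delta_i on 1-forms} then tells us that $d_c^r\alpha$ represents $\Delta_r\alpha$ on $G_2$ and that $d_c^r\varphi_P^*\alpha$ represents $\Delta_r(\varphi_P^*\alpha)$ on $G_1$. Here I use that $B_1^{1,0}=0$ and that no higher-weight harmonic corrections are needed.

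There is a concern about whether $d_c^r\alpha$ is literally a representative in the sense of Theorem~\ref{thm: commutativity}, where representatives are weight-$(1+r)$ components of $d(\alpha-\sum z)$. The Rumin expression differs from the raw one only by elements of $B_1=\operatorname{Im}d_0$, since $d_c^r=\Pi_0\partial_r$ and $\partial_r\alpha-\Pi_0\partial_r\alpha\in\operatorname{Im}d_0$ up to the $d_0^{-1}d_0$ part. That part vanishes on the relevant cycles, or at worst lies in $B_r$. Theorem~\ref{thm: commutativity} allows any representatives modulo $B_r$, so the ambiguity is harmless.

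Next, apply Theorem~\ref{thm: commutativity} with $\omega_2=d_c^r\alpha$ and $\omega_1=d_c^r\varphi_P^*\alpha$. This gives
\[
\varphi_P^*d_c^r\alpha-d_c^r\varphi_P^*\alpha\in B_r^{1+r,1-r}(G_1).
\]
By Proposition~\ref{prop: Bi é B1 per le 2 forme}, this space equals $B_1^{1+r,1-r}(G_1)=d_0\Omega^{1+r,-r}(G_1)$, which yields $\beta_r$ and proves the second form of the statement. This reduction of $B_r$ to $B_1$ is the key point; without it the result would only hold modulo a larger space.

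Finally, apply $\Pi_0$, which annihilates $\operatorname{Im}d_0$. Since $d_c^r\varphi_P^*\alpha\in E_0^2$, we have $\Pi_0 d_c^r\varphi_P^*\alpha=d_c^r\varphi_P^*\alpha$, and therefore
\[
\Pi_0\varphi_P^*d_c^r\alpha=d_c^r\varphi_P^*\alpha.
\]
For the converse, if the projected identity holds, then the difference $\varphi_P^*d_c^r\alpha-d_c^r\varphi_P^*\alpha$ lies in $\ker\Pi_0$. It is also $d_0$-closed, because $d_0$ commutes with $\varphi_P^*$ by Lemma~\ref{lem: d_0 commutes} and $d_c^r\alpha$ is $d_0$-closed. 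A $d_0$-closed element of $\ker\Pi_0$ lies in $\operatorname{Im}d_0$ by the Hodge decomposition, which gives the equivalence.

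The main obstacle is the first step: carefully matching the Rumin representative $d_c^r$ with the representatives produced in the proof of Theorem~\ref{thm: commutativity}, modulo $B_r$.
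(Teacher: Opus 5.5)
Your proposal is correct and follows essentially the paper's route. You specialise Theorem~\ref{thm: commutativity} to $p=k=1$, read it through the harmonic identifications of Corollaries~\ref{cor: Delta_i on 1-forms} and~\ref{cor: pullback in the subspaces}, and use $B_r^{1+r,1-r}=B_1^{1+r,1-r}$ to turn the congruence into a $d_0$-exact difference; your explicit check of the equivalence via the fibrewise Hodge decomposition is a welcome addition. The representative issue you flag is already settled by Corollary~\ref{cor: Delta_i on 1-forms}, so your hedge ``or at worst lies in $B_r$'' is unnecessary: choosing $z_{1+j}=d_0^{-1}\partial_j\alpha$ gives the canonical representative $\partial_r\alpha$, $d^2=0$ forces $d_0\partial_r\alpha=0$, and hence $\partial_r\alpha$ differs from $d_c^r\alpha$ only by $d_0d_0^{-1}\partial_r\alpha\in B_1$.
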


We conclude this section with a further application of Theorem \ref{thm: commutativity} to the Rumin complex. While the preceding results concern horizontal $1$-forms on any Carnot group, we now consider a new condition that allows the same approach to be applied in arbitrary degrees.

\begin{definition}[Carnot groups with the non-splitting property]
    We say that a Carnot group $G$ satisfies the non-splitting property if, for each degree $0\leq k\leq\dim G$, the space $E_0^k(G)$ is concentrated in a single weight. Thus, for each $k$, there exists a weight $p$, such that $ E_0^k \subset \Omega^{p,k-p}(G)$.
\end{definition}
Examples of such groups include Heisenberg groups $\mathbb H^n$ for every $n\geq1$, and the five-dimensional Cartan group (see \cite[$N_{5,2,3}$]{LeDonneTripaldi2021})

\begin{corollary}\label{cor: comm d_c non splitting}
    Let $\varphi : G_1 \rightarrow G_2$ be a smooth Pansu differentiable map. Suppose that, for some degree $k$, weight $p$ and integer $r\geq 1$,
    \begin{align*}
        E_0^k(G_h) \subset \Omega^{p,k-p}(G_h)\  \text{ and }\  E_0^{k+1}(G_h) \subset \Omega^{p+r,k+1-p-r}(G_h) \quad \text{ for } h=1,2\,.
    \end{align*}
    Then
    \begin{equation*}
        \Pi_0 \varphi_P^* d_c \alpha = d_c \Pi_0 \varphi_P^* \alpha \quad \text{for every } \alpha \in E^k_0(G_2)\,.
    \end{equation*}
    In particular, if $G_1$ and $G_2$ satisfy the non-splitting property with the same sequence of weights, this conclusion holds in every degree and 
    \begin{equation*}
        \Pi_0 \varphi_P^* : \big(E_0^\bullet (G_2), d_c\big) \rightarrow \big(E_0^\bullet(G_1), d_c \big) 
    \end{equation*}
    is a morphism of complexes.
\end{corollary}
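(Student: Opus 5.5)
The idea is to use the single-weight hypothesis to collapse all correction forms and boundary spaces to page one, and then read off the statement from Theorem~\ref{thm: commutativity} through the harmonic identifications of Lemma~\ref{lem: Harmonic reduction of the spectral quotients}.

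\textbf{Step 1 (only one component of $d_c$ survives).} Since $E_0^k(G_h)\subset\Omega^{p,k-p}(G_h)$ and $E_0^{k+1}(G_h)\subset\Omega^{p+r,k+1-p-r}(G_h)$, the decomposition \eqref{eq: weight decomposition dc} gives $I_{p,k}\subseteq\{r\}$. Hence on $E_0^k$ we have $d_c=d_c^r$, and $d_c^j=0$ for $j\neq r$. (If $d_c$ vanishes identically, the claim is trivial.)

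\textbf{Step 2 (every harmonic $k$-form is an $r$-cycle).} Apply Proposition~\ref{prop: formulating main thm with Rumin differentials}. There are no nonzero harmonic forms of degree $k$ and weight $p+j$ with $j\geq 1$, so all correction forms $\omega_{p+j}$ vanish. The cycle conditions then read $d_c^i\alpha=0$ for $i<r$, and these hold automatically. Thus $E_0^k\subseteq Z_r^{p,k-p}$ on both groups, and for $\alpha\in E_0^k$,
\[
\Delta_r[\alpha]=[d_c^r\alpha]=[d_c\alpha].
\]

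\textbf{Step 3 (the target boundary space reduces to $B_1$).} We need $B_r^{p+r,k+1-p-r}=B_1^{p+r,k+1-p-r}$. The harmonic part of $B_r$ is described by Proposition~\ref{prop: formulating main thm with Rumin differentials}: harmonic elements $c_{p+i}$ of degree $k$ and weight $p+i$ with $i=1,\ldots,r-1$. These vanish unless $p+i=p$, which never happens for $i\ge1$. So $B_r\cap\ker\Box_0=0$, and Lemma~\ref{lem: Harmonic reduction of the spectral quotients} yields $B_r=B_1$ in that bidegree.

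This step is the one I expect to be the main obstacle. The indexing in Proposition~\ref{prop: formulating main thm with Rumin differentials} is delicate, and one must check that $\alpha=d_c^{r-1}c-\dots$ cannot produce a nonzero harmonic boundary. If that check fails, I would argue directly as in Proposition~\ref{prop: Bi é B1 per le 2 forme}. There, the relations $0=\sum_j d_{j-l}c$, together with the Hodge decomposition, force the relevant correction pieces to be $d_0$-exact modulo lower pages. Alternatively, one can note that for the mod-$B_r$ statement it is enough to work with $E_0^{k+1}\cong Z/B_r$, since the single weight of $E_0^{k+1}$ equals $H(E_r)$ in that bidegree.

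\textbf{Step 4 (apply the main theorem).} Take $\alpha\in E_0^k(G_2)$. Theorem~\ref{thm: commutativity} gives
\[
\varphi_P^*d_c\alpha-\omega_1\in B_r=B_1=\operatorname{Im}d_0,
\]
where $\omega_1$ is any representative of $\Delta_r(\varphi_P^*\alpha)$. Since $\varphi_P^*\alpha-\Pi_0\varphi_P^*\alpha\in B_1\subseteq B_m$, we may choose $\omega_1=d_c\Pi_0\varphi_P^*\alpha$, using Step 2 on $G_1$.

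Both $\Pi_0\varphi_P^*d_c\alpha$ and $d_c\Pi_0\varphi_P^*\alpha$ are harmonic. They differ by an element of $\operatorname{Im}d_0$ after applying $\Pi_0$ (note $\Pi_0$ kills $\operatorname{Im}d_0$), so they are equal.

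\textbf{Step 5 (the non-splitting case).} Under the non-splitting property with matching weight sequences, the hypothesis of Steps 1--4 holds in every degree $k$. Hence $\Pi_0\varphi_P^*$ intertwines $d_c$ degreewise, i.e.\ it is a morphism of complexes.
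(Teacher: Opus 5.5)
Your proof is correct and follows essentially the same route as the paper. The single-weight hypotheses give $E_0^k\subseteq Z_r^{p,k-p}$ (so $Z_r=Z_1$), $B_r^{p+r,k+1-p-r}=B_1^{p+r,k+1-p-r}$, and $\Delta_r$ represented by $d_c=d_c^r$, after which Theorem~\ref{thm: commutativity} yields the identity. You additionally make explicit two steps the paper leaves implicit: the choice of representative $\omega_1=d_c\Pi_0\varphi_P^*\alpha$ (justified since $\varphi_P^*\alpha-\Pi_0\varphi_P^*\alpha\in B_1$), and the final application of $\Pi_0$ to kill the $\operatorname{Im}d_0$ discrepancy.
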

\begin{proof}
    Since the Rumin differential acting on $E_0^k(G_h)$ has only the component that increase the weight by $r$, Proposition \ref{prop: formulating main thm with Rumin differentials} gives $Z_r^{p,k-p}(G_h) = Z_1^{p,k-p}(G_h)$. Similarly, $B_r^{p+r,k+1-p-r}(G_h) = B_1^{p+r,k+1-p-r}$ and the differential $\Delta_r$ is represented by $d_c = d_c^r$. The equality then follows by Theorem \ref{thm: commutativity}.
\end{proof}

\begin{remark}
    We emphasise that, as explained in \cite[Section 4.1]{tripaldi2026spectralcomplexestruncatedmulticomplexes}, under the non-splitting assumption, the family of spectral complexes collapses to a single complex, which coincides with the Rumin complex. Therefore, in the case where $G$ is a Carnot group with the non-splitting property, given $\varphi\colon G\to G$ a smooth Pansu differentiable map, Corollary \ref{cor: comm d_c non splitting} implies the commutativity between the Rumin differential and the Pansu pullback, i.e. $d_c \Pi_0\varphi_P^\ast=\Pi_0\varphi_P^\ast d_c$ in each degree.

\end{remark}

\subsubsection{On $L^2$ realisations of spectral complexes}\label{subsect: Orthogonal realisations of the spectral complexes}
The fibrewise realisations used above do not extend formally to all the quotients $Z_r^{p,k-p}/B_m^{p,k-p}$. A natural approach would be to introduce an $L^2$-inner product, but this would require additional choices of domains and further analytic results. We briefly describe these requirements.

The pointwise inner product introduced in \eqref{eq: def pointwise scalar product} induces a scalar-valued inner product on compactly supported smooth forms:
\begin{align*}
    (\alpha_1,\alpha_2)_{L^2}:=\int_G\langle\alpha_1,\alpha_2\rangle_k\operatorname{vol}=\int_G\alpha_1\wedge\star\alpha_2 \quad \alpha_1,\alpha_2\in\Omega_c^k(G)\,.
\end{align*}
 The completion of $\Omega_c^k(G)$ with respect to the associated norm is the Hilbert space $L^2\Omega^k(G)$. Since the layers of $\mathfrak g$ are mutually
orthogonal, forms of different weights are orthogonal, and therefore
$$
L^2\Omega^k(G)
=
\bigoplus_{p=0}^Q L^2\Omega^{p,k-p}(G) \quad\text{where} \quad L^2\Omega^{p,k-p}(G)\cong L^2(G) \otimes \bigwedge\nolimits^{p,k-p} \mathfrak{g}^*
$$
is an orthogonal direct sum. 

The first issue concerns the exterior derivative, which is not defined as an $L^2$-valued operator on all $L^2\Omega^k(G)$. For $\alpha\in L^2\Omega^k(G)$, its distributional
exterior derivative is defined by
$$\langle d\alpha,\eta\rangle_{\mathrm{dist}}
    :=
    (-1)^{k+1}\int_G\alpha\wedge d\eta,
    \qquad
    \eta\in\Omega_c^{n-k-1}(G)\,,$$
where the left-hand side denotes the distributional pairing
with a test form of complementary degree. A natural choice is to restrict to the maximal domain

\begin{equation*}
    \Omega_{\flat}^k(G) = \big\{ \alpha \in L^2\Omega^k(G) \mid d\alpha \in L^2\Omega^{k+1}(G) \text{ distributionally }\big\}\,.
\end{equation*}
Thus, as $d^2 = 0$, $\big(\Omega_{\flat}^\bullet(G), d\big)$ is a complex. 
This does not yet provide the multicomplex structure required by the construction in Section \ref{chapter truncated multicomplex}. One would also have to choose a bigraded domain preserved by each component $d_j$. These properties do not follow merely from the condition $d\alpha \in L^2\Omega^{k+1}(G)$. Indeed terms arising from different weight components of $\alpha$ can contribute to the same component of $d\alpha$ and their sum being in $L^2$ does not control each term separately.
A smaller invariant domain may therefore be necessary. 

Suppose that a compatible domain has been specified. Denote it by $\widetilde{\Omega}^\bullet_{\flat}(G)$ and construct the corresponding cycle and boundary spaces $Z_r^{p,k-p}$ and $B_m^{p,k-p}$.

For any linear subspace $S \subset \widetilde{\Omega}_{\flat}^{p,k-p}(G)$, one has $
S^\perp=(\overline S^{\,L^2})^\perp
$. Therefore, an orthogonal-complement realisation cannot distinguish
a subspace from its $L^2$-closure.
To obtain an orthogonal representative of every class modulo $\overline{B_m^{p,k-p}}^{\,L^2}$, one must first verify if
\begin{equation}\label{eq:cond for subspaces L2}
    \overline{B_m^{p,k-p}}^{\,L^2} \subset Z_r^{p,k-p}\,.
\end{equation}
If, for example, $Z_r^{p,k-p}$ is $L^2$-closed, Condition \eqref{eq:cond for subspaces L2} is automatically verified. Nevertheless, the spectral cycle conditions involve the existence of correction forms, whose solvability requires separate analysis.

Under Condition \eqref{eq:cond for subspaces L2}, one has
\begin{equation*}
    Z_r^{p,k-p}
\cap
\left(B_m^{p,k-p}\right)^\perp \cong Z_r^{p,k-p}/
\overline{B_m^{p,k-p}}^{\,L^2}
\end{equation*}
If the boundary space is not closed, this construction represents a reduced quotient, rather than the algebraic quotient \(Z_r^{p,k-p}(G)/B_m^{p,k-p}(G)\).

Finally, the pullback introduces further requirements. Proposition \ref{prop:Pansu-preserves-Zr-Br} establishes preservation of the smooth cycle and boundary spaces, but does not by itself establish preservation of the domain $\widetilde{\Omega}^\bullet_{\flat}(G_h)$.
Even assuming that the Pansu pullback admits a bounded extension preserving $Z_r^{p,k-p}$ and $B_m^{p,k-p}$, the orthogonal
complements may not be preserved. Therefore, one
does not usually have
$$
\varphi_P^*
\left(
\left(B_m^{p,k-p}(G_2)\right)^\perp
\right)
\subseteq
\left(B_m^{p,k-p}(G_1)\right)^\perp\,.
$$
Thus, the induced Pansu pullback is represented by
$$
\alpha
\longmapsto
\operatorname{pr}_{(\overline{B_m(G_1)}^{\,L^2})^\perp}
\bigl(\varphi_P^*\alpha\bigr).
$$

These considerations explain why we retain the smooth quotient formulation and use the fibrewise harmonic reduction in the cases needed for Section 5. In those cases, the relevant boundary spaces equal $B_1=\operatorname{Im}d_0$, so no global integrability condition or Hilbert-space completion is required.

\section{Lifting Pansu derivatives to central extensions}\label{section application}

The result of Theorem \ref{thm: commutativity} on the commutativity between the Pansu pullback and the differentials $\Delta_i$ appearing in the spectral complexes has a very direct consequence when applied to central extensions of nilpotent Lie groups. We remind the reader that throughout this section the Pansu differentiable maps considered are assumed to be (Euclidean) smooth. A standard reference for all definitions and statements of the classical results contained in this section is Chapter 1, Section 4 of \cite{FUKS}.

\begin{definition}[Central extensions: definition and main properties]\label{def: central ext}
    A central extension of a Lie algebra $\mathfrak{g}$ by a vector space $V$ is a short exact sequence
    \begin{align*}
        0\longrightarrow V\xrightarrow[]{\iota_\mathfrak{g}}\hat{\mathfrak{g}}\xrightarrow[]{\pi_\mathfrak{g}}\mathfrak{g}\longrightarrow 0
    \end{align*}
    such that the image of the homomorphism $\iota_\mathfrak{g}\colon V\longrightarrow\hat{\mathfrak{g}}$ is contained in the centre $Z(\hat{\mathfrak{g}})$ of the Lie algebra $\hat{\mathfrak{g}}$ and the linear subspace $V$ is considered as an abelian Lie algebra.

Choosing a linear splitting of $\pi_{\mathfrak g}$
identifies $\hat{\mathfrak g}$, as a vector space,
with $V\oplus\mathfrak g$, with the standard inclusion
and projection. The Lie bracket in the vector space $V\oplus\mathfrak{g}$ can be defined by the formula
    \begin{align*}
        \left[(u,X),(v,Y)\right]_{\hat{\mathfrak{g}}}:=\left(\omega(X,Y),[X,Y]_{\mathfrak{g}}\right)\ \text{ for all }u,v\in V\text{ and }X,Y\in\mathfrak{g}\,,
    \end{align*}
    where $\omega$ is a skew-symmetric bilinear function on $\mathfrak{g}$ which takes values in the vector space $V$ and $[\cdot,\cdot]_{\mathfrak{g}}$ denotes the Lie bracket of the Lie algebra $\mathfrak{g}$. One can verify directly that the Jacobi identity for the new bracket $[\cdot,\cdot]_{\hat{\mathfrak{g}}}$ is equivalent to the condition that the bilinear function is a cocycle, i.e. the following equality holds identically
    \begin{align*}
        \omega([X,Y]_{\mathfrak{g}},Z)+\omega([Y,Z]_{\mathfrak{g}},X)+\omega([Z,X]_{\mathfrak{g}},Y)=0\ \text{ for all }X,Y,Z\in\mathfrak{g}\,.
    \end{align*}
    Two extensions are called equivalent if there is an isomorphism of Lie algebras $\Phi\colon\hat{\mathfrak{g}}_1\longrightarrow\hat{\mathfrak{g}}_2$ such that the following diagram is commutative
    \[
\begin{tikzcd}
0 \arrow[r, " "] \arrow[d, " "] 
  & V \arrow[d, "\operatorname{Id}"]\arrow[r, " \iota_1"] & \hat{\mathfrak{g}}_1 \arrow[d, "\Phi" ]\arrow[r, "\pi_1"] & \mathfrak{g} \arrow[d, "\operatorname{Id}"]\arrow[r, " "] & 0\arrow[d," "]\\
0 \arrow[r, " "] 
  & V\arrow[r, " \iota_2"] & \hat{\mathfrak{g}}_2\arrow[r, "\pi_2"]  & \mathfrak{g} \arrow[r, " "] & 0
\end{tikzcd}
\]
A cocycle $\omega$ is cohomologous to zero if there exists a linear mapping $\mu\colon\mathfrak{g}\longrightarrow V$ such that $\omega(X,Y)=-\mu([X,Y]_{\mathfrak{g}})$ for all $X,Y\in\mathfrak{g}$. In this situation, the cocycle is called a coboundary and we denote this as $\omega=d\mu$. 

Two cocycles $\omega$ and $\omega'$ are cohomologous if their difference is cohomologous to zero, i.e. $\omega-\omega'=d\mu$. Cohomologous cocycles define equivalent central extensions. To prove this, it suffices to verify that the linear mapping 
\begin{align*}
    \Phi=\operatorname{Id}+\mu\colon V\oplus\mathfrak{g}\longrightarrow V\oplus\mathfrak{g}\ ,\ \Phi(v,X)=\left(v+\mu(X),X\right)
\end{align*}
is an isomorphism of Lie algebras (as in the diagram above).
\end{definition}
\begin{remark}\label{remark: V is 1-dimensional}
    In the present discussion, we focus on 1-dimensional central extensions, that is we will only take $V=\operatorname{span}_\mathbb{R}\{T\}=\mathbb R$ (we are working with real Lie algebras). In this case, the entire central extension construction can be more efficiently rephrased in terms of left-invariant 2-forms. Indeed, $\omega$ being a skew-symmetric bilinear function on $\mathfrak{g}$ which takes values in $\mathbb R$ is equivalent to requiring $\omega\in\bigwedge^2\mathfrak{g}^\ast\cong\Omega_L^2(G)$. Furthermore, the cocycle condition on $\omega$ means that $d\omega=d_0\omega=0$, i.e. $\omega\in\ker d_0$, while $\omega$ being cohomologous to zero means that $\omega=d\beta=d_0\beta$ for some $\beta\in\mathfrak{g}^\ast$. Note again that an exact 2-covector $\omega=d_0\beta$ defines an extension $\hat{\mathfrak{g}}$ which is isomorphic to the direct sum of $\operatorname{span}_{\mathbb R}\{T\}\oplus\mathfrak{g}$ where $[T,X]=0$ for any $X\in\mathfrak{g}$. Such a central extension is called trivial.  

    In other words, non trivial 1-dimensional central extensions are classified by the Lie algebra cohomology classes in degree 2, i.e. $H^2(\mathfrak{g},\mathbb R)$. On the other hand, the action of $d$ on left-invariant forms coincides with $d_0$ and so (when specialised to the case of 2-forms) $E_0^2=\ker\Box_0\cap\Omega^{2}(G)\cong C^\infty(G)\otimes H^2(\mathfrak{g},\R)$.
\end{remark}

In our considerations, we are going to use the following classical result.
\begin{theorem}
    \label{thm: algebraic construction}
    Suppose we have two 1-dimensional central extensions
\begin{align*}
0\longrightarrow\mathbb R
\xrightarrow{\iota_1}\hat{\mathfrak g}_1
\xrightarrow{\pi_1}\mathfrak g_1
\longrightarrow0,\\
0\longrightarrow\mathbb R
\xrightarrow{\iota_2}\hat{\mathfrak g}_2
\xrightarrow{\pi_2}\mathfrak g_2
\longrightarrow0.
\end{align*}
        classified by the two corresponding cohomology classes $[\omega]\in H^2(\mathfrak{g}_1,\mathbb R)$ and $[\zeta]\in H^2(\mathfrak{g}_2,\mathbb R)$.

A Lie algebra homomorphism
$\varphi:\mathfrak g_1\to\mathfrak g_2$
lifts to a Lie algebra homomorphism
$\Phi:\hat{\mathfrak g}_1\to\hat{\mathfrak g}_2$
satisfying
\[
    \pi_2\circ\Phi=\varphi\circ\pi_1
    \quad \text{and}\quad
    \Phi\circ\iota_1=\iota_2
\]
if and only if
\[
    [\omega]=\varphi^*[\zeta]
    \quad\text{in }H^2(\mathfrak g_1,\mathbb R)\ \Longleftrightarrow\ \omega=\varphi^*\zeta+d\eta
    \quad\text{for some }\eta\in\mathfrak g_1^*.
\]
The condition $\Phi\circ\iota_1=\iota_2$ requires the lift
to induce the identity on the prescribed central kernels.
        
        If this is the case, then the map
        \begin{align*}
            \Phi\colon\hat{\mathfrak{g}}_1\cong\mathbb R\oplus\mathfrak{g}_1\longrightarrow\hat{\mathfrak{g}}_2\cong\mathbb R\oplus\mathfrak{g}_2\ ,\ \Phi(u,X):=\left(u+\eta(X),\varphi(X)\right)
        \end{align*}    
        gives rise to the lift we are looking for, since
        \begin{align*}
            &\Phi\left([(u,X),(v,Y)]_{\hat{\mathfrak{g}}_1}\right)=\Phi\left(\omega(X,Y),[X,Y]_{\mathfrak{g}_1}\right)=\left(\omega(X,Y)+\eta([X,Y]_{\mathfrak{g}_1}),\varphi([X,Y]_{\mathfrak{g}_1})\right)\text{ while }\\
            &\left[\Phi(u,X),\Phi(v,Y)\right]_{\hat{\mathfrak{g}}_2}=\left[(u+\eta(X),\varphi(X)),(v+\eta(Y),\varphi(Y))\right]_{\hat{\mathfrak{g}}_2}=\left(\zeta(\varphi(X),\varphi(Y)),[\varphi(X),\varphi(Y)]_{\mathfrak{g}_2}\right)
        \end{align*}
        and they coincide since $\varphi\colon\mathfrak{g}_1\to\mathfrak{g}_2$ is a Lie algebra homomorphism, and so $\varphi([X,Y]_{\mathfrak{g}_1})=[\varphi(X),\varphi(Y)]_{\mathfrak{g}_2}$, but also for any $X,Y\in\mathfrak{g}_1$ we have $\varphi^\ast\zeta(X,Y)=\zeta(\varphi(X),\varphi(Y))=\omega(X,Y)-d\eta(X,Y)=\omega(X,Y)+\eta([X,Y]_{\mathfrak{g}_1})$.

        One can check that the identities
$\pi_2\circ\Phi=\varphi\circ\pi_1$ and
$\Phi\circ\iota_1=\iota_2$
follow directly from the formula for $\Phi$.

Conversely, suppose that a Lie algebra homomorphism $\Phi$
satisfying both compatibility conditions exists.
Since $\Phi$ is linear, the identity
$\pi_2\circ\Phi=\varphi\circ\pi_1$ implies that
$\Phi(0,X)=(\eta(X),\varphi(X))$
for some $\eta\in\mathfrak g_1^*$.
The additional condition $\Phi\circ\iota_1=\iota_2$
then gives
\[
    \Phi(u,X)=(u+\eta(X),\varphi(X)).
\]
Comparing the central components in
\[
    \Phi([(0,X),(0,Y)]_{\hat{\mathfrak g}_1})
    =
    [\Phi(0,X),\Phi(0,Y)]_{\hat{\mathfrak g}_2}
\]
yields
\[
    \omega(X,Y)+\eta([X,Y]_{\mathfrak g_1})
    =
    \zeta(\varphi(X),\varphi(Y)).
\]
Since $d\eta(X,Y)=-\eta([X,Y]_{\mathfrak g_1})$,
we conclude that
$\omega-\varphi^*\zeta=d\eta$.
Thus $[\omega]=\varphi^*[\zeta]$, proving necessity.
\end{theorem}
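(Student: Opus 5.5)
The statement to prove is Theorem~\ref{thm: algebraic construction}: for a Lie algebra homomorphism $\varphi\colon\mathfrak g_1\to\mathfrak g_2$, a lift $\Phi\colon\hat{\mathfrak g}_1\to\hat{\mathfrak g}_2$ with $\pi_2\circ\Phi=\varphi\circ\pi_1$ and $\Phi\circ\iota_1=\iota_2$ exists if and only if $[\omega]=\varphi^*[\zeta]$. I would prove both directions by direct computation in a fixed splitting. Choose linear splittings so that $\hat{\mathfrak g}_h\cong\mathbb R\oplus\mathfrak g_h$, with brackets given by the cocycles $\omega$ and $\zeta$ as in Definition~\ref{def: central ext}. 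First I would record the one identity needed in both directions: for $\eta\in\mathfrak g_1^*$,
\[
d\eta(X,Y)=-\eta([X,Y]_{\mathfrak g_1}),
\]
which is formula \eqref{eq: formula for d_0} in degree one.

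For sufficiency, assume $\omega=\varphi^*\zeta+d\eta$ and define $\Phi(u,X):=(u+\eta(X),\varphi(X))$. The map is linear, and both compatibility conditions hold immediately: $\pi_2\Phi(u,X)=\varphi(X)$ and $\Phi(u,0)=(u,0)$. To check that $\Phi$ is a bracket homomorphism, expand both sides. The $\mathfrak g_2$-components agree because $\varphi$ is a homomorphism. The central components are $\omega(X,Y)+\eta([X,Y])$ on the left and $\zeta(\varphi X,\varphi Y)$ on the right. These agree because, by the identity above, $\omega-d\eta=\varphi^*\zeta$. The central summands $u,v$ drop out of all brackets, since they are central.

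For necessity, let $\Phi$ be a lift. The condition $\pi_2\circ\Phi=\varphi\circ\pi_1$ forces $\Phi(0,X)=(\eta(X),\varphi(X))$. Here $\eta\colon\mathfrak g_1\to\mathbb R$ is the central component of $\Phi(0,X)$, and it is linear because $\Phi$ is. The condition $\Phi\circ\iota_1=\iota_2$ gives $\Phi(u,0)=(u,0)$, so $\Phi(u,X)=(u+\eta(X),\varphi(X))$. Now apply $\Phi$ to $[(0,X),(0,Y)]$ and compare central components. This gives $\omega(X,Y)+\eta([X,Y])=\zeta(\varphi X,\varphi Y)$, that is, $\omega-\varphi^*\zeta=d\eta$. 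Hence $[\omega]=\varphi^*[\zeta]$.

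There is no real obstacle here. The only point needing care is independence of the choices. Changing the splitting replaces each cocycle by a cohomologous one, and by Definition~\ref{def: central ext} the resulting extensions are identified through isomorphisms of the form $\operatorname{Id}+\mu$. Conjugating a lift by these isomorphisms gives a lift for the new splittings, and the criterion is stated on cohomology classes, so the equivalence does not depend on the representatives or splittings chosen. The remaining thing to watch is the sign convention in $d\eta$, which must match \eqref{eq: formula for d_0}.
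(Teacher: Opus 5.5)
Your proposal is correct and follows the paper's proof essentially verbatim. For sufficiency you define $\Phi(u,X)=(u+\eta(X),\varphi(X))$ and compare central components using $d\eta(X,Y)=-\eta([X,Y]_{\mathfrak g_1})$; for necessity you read off $\eta$ from the central component of $\Phi(0,X)$, use $\Phi\circ\iota_1=\iota_2$, and compare central components. Your remark on independence of the chosen splittings, via conjugation by $\operatorname{Id}+\mu$, is correct and makes explicit a point the paper leaves implicit.
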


We are interested in applying Theorem \ref{thm: algebraic construction} to the case of Pansu differentiable maps between two Carnot groups $\varphi\colon G_1\to G_2$ since by Definition \ref{def: P deriv} the Pansu derivative is a group homomorphism and hence the induced map $D_P\varphi(x)\colon\mathfrak{g}_1\to\mathfrak{g}_2$ is a Lie algebra homomorphism for every $x\in G_1$. 

Indeed, we would like to understand whether the Lie algebra homomorphism $D_P\varphi(x)\colon\mathfrak{g}_1\to\mathfrak{g}_2$ can lift to a Lie algebra homomorphism $\Phi_x\colon\hat{\mathfrak{g}}_1\to\hat{\mathfrak{g}}_2$ on the central extensions of $\mathfrak{g}_1$ and $\mathfrak{g}_2$. 

For this purpose, we just need to specialise Theorem \ref{thm: commutativity} to consider smooth horizontal 1-forms $\alpha\in\Omega^{1,0}(G_2)$ whose $\Delta_r\alpha$ (for a non-trivial choice of $r$) is a left-invariant 2-form.

\begin{remark}
    As stated in Theorem \ref{thm: algebraic construction}, 1-dimensional central extensions are classified via left-invariant 2-forms that belong to the cohomology of the Chevalley-Eilenberg complex $H^2(\mathfrak{g}_h,\mathbb R)$. Since we are assuming to be working with a scalar product on $\mathfrak{g}_h$ with $h=1,2$, this means that there is a direct correspondence between $\ker d_0\cap\left(\operatorname{Im}d_0\right)^\perp\cap \Omega_L^2(G_h)$ and central extensions (this is a direct consequence of the fact that $d=d_0$ on left-invariant forms):
    \begin{align*}
        \omega\in\Omega_L^2(G_h)=\bigwedge\nolimits^2\mathfrak{g}_h^\ast\text{ defines a non-trivial central extension }\hat{\mathfrak{g}}_h\Longleftrightarrow d_0\omega=0\ \text{ and }\Pi_0\omega\neq 0\,.
    \end{align*}
    By Proposition \ref{prop: Bi é B1 per le 2 forme}, we have that for each appropriate choice of $i,m\in\N
    $
    \begin{align*}
        E_{m,i}^{1+i,1-i}(G_h)\cong Z_m^{1+i,1-i}(G_h)\cap\left(B_1^{i+1,1-i}(G_h)\right)^\perp\subset Z_1^{i+1,1-i}(G_h)\cap\left(B_1^{i+1,1-i}(G_h)\right)^\perp=\ker\Box_0\cap\Omega^{i+1,1-i}(G_h)\,.
    \end{align*}
    On the other hand, since we are limiting our considerations to the case of left-invariant 2-forms, we also have that for each non trivial choice of $p,j\in\N$
    \begin{align*}
        \omega\in\ker\Box_0\cap\Omega_L^{p,2-p}(G_l)\Longrightarrow \omega\in \ker d\cap\Omega^{p,2-p}(G_l)\Longrightarrow \omega\in Z_m^{p,2-p}(G_l)\text{ for every }m\ge 1\,.
    \end{align*}
This is true in general, since left invariant forms that belong to $\ker d_0\cap\Omega^\bullet(G_h)=Z_1^{\bullet,\bullet}(G_h)$ also belong to all possible $Z_r^{\bullet,\bullet}(G_h)$:
\begin{align}\label{eq: left-invariant closed}
    \text{ for all }\omega\in Z_1^{\bullet,\bullet}(G_h)\cap\Omega^\bullet_L(G_h) \text{ we have }\omega\in\bigcap_{r=1}^\infty Z_r^{\bullet,\bullet}(G_h)\,.
\end{align}
The statement can be easily checked by taking $z_{p+i}=0\in\Omega^{p+i,\bullet}(G_h)$ for any $1\le i< r$ (see Definition \ref{def: Z and B defined}), since $\omega\in\Omega_L^\bullet(G_h)$ implies $\omega\in\ker (d-d_0)$.
    
    We are going to need this for considerations on closedness vs. exactness properties of 2-forms.
\end{remark}

\begin{proposition}[Lifting the Pansu derivative to central extensions]\label{prop: extensions}
Let $G_1$ and $G_2$ be two Carnot groups and let $\varphi\colon G_1\to G_2$ be a Pansu differentiable map with $D_P\varphi(x)\colon\mathfrak{g}_1\to\mathfrak{g}_2$ its Pansu derivative in $x \in G_1$, and consider a left-invariant 2-form $\omega\in\ker\Box_0\cap\Omega^2_L(G_2)$ such that $\omega=\omega_{p_1}+\cdots+\omega_{p_N}\in\oplus_{i=1}^N\Omega^{p_i,2-p_i}(G_2)$ giving rise to the central extension $\hat{\mathfrak{g}}_2$.
Set $P=\{p_1,\ldots,p_N\}$, hence for each $s\in P$, define
$\alpha_{s-1}\in Z_{s-1}^{1,0}(G_2)$ satisfying
$d_c^{s-1}\alpha_{s-1}=\omega_s$, and set
$$\zeta:=
    \sum_{s\in P}
    d_c^{s-1}(\varphi_P^*\alpha_{s-1})\,.$$
Assume that $\zeta$ is left-invariant.
Then $\zeta$ is a harmonic $2$-cocycle and defines
a central extension $\hat{\mathfrak g}_1$.

For every $x\in G_1$, the homomorphism $D_P\varphi(x)$
admits a lift
$$\Phi_x:\hat{\mathfrak g}_1\to\hat{\mathfrak g}_2$$
inducing the identity on the central kernels.
More precisely, setting $\beta:=d_0^{-1}(\varphi_P^*\omega)$,
one such lift is $
    \Phi_x(u,X)
    =
    \bigl(u-\beta_x(X),D_P\varphi(x)X\bigr)$.

\end{proposition}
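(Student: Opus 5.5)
The plan is to reduce the statement, pointwise in $x\in G_1$, to the classical algebraic criterion of Theorem~\ref{thm: algebraic construction}. That criterion needs the identity $\zeta=D_P\varphi(x)^*\omega+d\eta_x$ for a suitable covector $\eta_x\in\mathfrak g_1^*$. The key input for this identity is Corollary~\ref{cor: pullback commuta con diff rumin 1-forms}, which is the horizontal $1$-form case of Theorem~\ref{thm: commutativity}.

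First, I will show that $\zeta=\Pi_0\varphi_P^*\omega$.
\begin{itemize}
\item For each $s\in P$ we have $\alpha_{s-1}\in Z_{s-1}^{1,0}(G_2)$ and $d_c^{s-1}\alpha_{s-1}=\omega_s$.
\item Corollary~\ref{cor: pullback commuta con diff rumin 1-forms}, applied with $r=s-1$, gives $\Pi_0\varphi_P^*\omega_s=d_c^{s-1}\varphi_P^*\alpha_{s-1}$.
\item Summing over $s\in P$ and using linearity of $\Pi_0$ and $\varphi_P^*$ yields $\Pi_0\varphi_P^*\omega=\zeta$.
\end{itemize}
It follows that $\zeta\in\ker\Box_0$, so $\zeta$ is harmonic. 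Since $\zeta$ is assumed left-invariant, $d\zeta=d_0\zeta=0$, so $\zeta$ is a $2$-cocycle. By Remark~\ref{remark: V is 1-dimensional} it therefore defines a central extension $\hat{\mathfrak g}_1$.

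Second, I will use the fibrewise Hodge decomposition \eqref{eq: Hodge decomp Box_0} to write
\[
\varphi_P^*\omega=\Pi_0\varphi_P^*\omega+d_0d_0^{-1}\varphi_P^*\omega+d_0^{-1}d_0\varphi_P^*\omega .
\]
Since $d_0\omega=0$, Lemma~\ref{lem: d_0 commutes} gives $d_0\varphi_P^*\omega=\varphi_P^*d_0\omega=0$, so the last term vanishes. Setting $\beta=d_0^{-1}(\varphi_P^*\omega)$, this leaves
\[
\varphi_P^*\omega=\zeta+d_0\beta .
\]

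Third, I will evaluate this identity at a fixed $x\in G_1$. Because $\omega$ is left-invariant, $(\varphi_P^*\omega)_x=D_P\varphi(x)^*\omega$ as a covector on $\mathfrak g_1$, by Definition~\ref{definition pansu pullback}. The operator $d_0$ is $\mathcal C^\infty$-linear and agrees with the Chevalley--Eilenberg differential $d$ on $\bigwedge^\bullet\mathfrak g_1^*$. Hence $(d_0\beta)_x=d(\beta_x)$ with $\beta_x\in\mathfrak g_1^*$, and evaluating gives
\[
\zeta=D_P\varphi(x)^*\omega+d(-\beta_x)\quad\text{in }\textstyle\bigwedge^2\mathfrak g_1^*,
\]
so $[\zeta]=D_P\varphi(x)^*[\omega]$ in $H^2(\mathfrak g_1,\mathbb R)$.

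Fourth, I will apply Theorem~\ref{thm: algebraic construction}. The source extension is classified by $\zeta$, the target by $\omega$, the homomorphism is $D_P\varphi(x)$, and I take $\eta=-\beta_x$. The theorem then yields the lift $\Phi_x(u,X)=(u-\beta_x(X),D_P\varphi(x)X)$, which induces the identity on the central kernels.

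The main point needing care is the third step, passing from the global identity of smooth forms to a pointwise identity of covectors. The form $\beta$ is generally not left-invariant, but only its value at $x$ enters, and the $\mathcal C^\infty$-linearity of $d_0$ makes the evaluation legitimate. I will also check that Corollary~\ref{cor: pullback commuta con diff rumin 1-forms} covers every index $s-1\geq1$ that can occur, including $s=2$, where $Z_1^{1,0}=\Omega^{1,0}$ and the corollary reduces to the $d_0$-commutation already proved.
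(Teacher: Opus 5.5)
Your proposal is correct and follows the paper's proof essentially step by step. You identify $\zeta=\Pi_0\varphi_P^*\omega$ via Corollary~\ref{cor: pullback commuta con diff rumin 1-forms}, write $\varphi_P^*\omega=\zeta+d_0\beta$, evaluate at $x$, and apply Theorem~\ref{thm: algebraic construction} with $\eta=-\beta_x$. The paper also proves that the forms $\alpha_{s-1}$ exist (using $H^2_{dR}(G_2)=0$ and $B_\infty^{s,2-s}=B_s^{s,2-s}$), which you simply take from the statement.

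One correction to your side remark: for $s=2$ the corollary does not reduce to the $d_0$-commutation. It reduces to the $n=1$ case of \eqref{eq:components-filtered-chain-map}, namely $\varphi_P^*d_1\alpha-d_1\varphi_P^*\alpha=d_0\Phi_1\alpha$ for horizontal $\alpha$. This is the case $i=1$ of Theorem~\ref{thm: commutativity}, so the corollary covers it anyway.
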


\begin{proof}
   The set up of spectral complexes greatly simplifies when considering the space of 1-forms on a Carnot group $G_l$. As shown in Corollary \ref{cor: Delta_i on 1-forms}, we can identify 
   $$\Delta_i\colon Z_i^{1,0}\longrightarrow Z_m^{1+i,1-i}/B_1^{1+i,1-i}\ \text{ with } \ d_c^i\colon Z_i^{1,0}\longrightarrow Z_m^{1+i,1-i}\cap E_0^2\,.$$
   Therefore, if the space of Rumin 2-forms is spanned by homogeneous forms of weight within the range $P=\{p_1,p_2,\ldots,p_N\}\subset\{n\in\N\mid n\ge 2\} $ , then the only non-trivial operators $d_c^i$ acting on 1-forms will be for $i\in P-1= \{p_1-1,p_2-1,\ldots,p_N-1\}$, that is
   \begin{align*}
       d_c^i\colon E_{i,1}^{1,0}(G_h)=Z_i^{1,0}(G_h)\longrightarrow E_{m_i,i}^{i+1,1-i}(G_h)\cong Z_{m_i}^{i+1,1-i}(G_h)\cap E_0^2
   \end{align*}
   for any appropriate choice of $m_i\in\N$.

   By Corollary \ref{cor: pullback commuta con diff rumin 1-forms}, Theorem \ref{thm: commutativity} takes the simplified form
   \begin{align}\label{eq: thm comm on 1-forms}
       \text{for all }\alpha\in E_{i,1}^{1,0}(G_2)\text{ we have that }\varphi_P^\ast d_c^i\alpha=d_c^i\varphi_P^\ast\alpha+d_0\beta\ \text{ for some }\beta\in\Omega^{i+1,-i}(G_1)\,.
   \end{align}
   Finally, we are left to apply \eqref{eq: thm comm on 1-forms} to our case, where we are considering a closed left-invariant 2-form $\omega=\omega_{p_1}+\cdots+\omega_{p_N}\in\bigwedge^2\mathfrak{g}_2^\ast$. As already shown in \eqref{eq: left-invariant closed}, for each $s\in P$ we have that $\omega_s\in\cap_{r=1}^\infty Z_r^{s,2-s}(G_2)$. By the theory of spectral sequences, we have that the cohomology of the underlying complex (in this case the de Rham cohomology) is carried by all the weights at page $E_\infty^{\bullet,\bullet}$. In other words, for each $s\in P$ we have
   \begin{align*}
       \omega_s\in Z_\infty^{s,2-s}(G_2)\ \Longrightarrow \ \omega_s\in B^{s,2-s}_\infty(G_2)
   \end{align*}
   where the implication comes from the fact that the de Rham cohomology of any Carnot group in degree 2 is trivial (i.e. closed 2-forms are exact).
   Since $B_\infty^{s,2-s}(G_2)=B_{s}^{s,2-s}(G_2)$ we have that \begin{align*}
       \text{for any }\omega_s\in\ker\Box_0\cap\bigwedge\nolimits^{s,2-s}\mathfrak{g}_2^\ast\ \text{ there exists }\alpha_{s-1}\in E_{s-1,1}^{1,0}(G_2)\text{ such that }d_c^{s-1}\alpha_{s-1}=\omega_s\,.
   \end{align*}
   Therefore by \eqref{eq: thm comm on 1-forms} we have that
   \begin{align*}
       \varphi_P^\ast\omega_s=\varphi_P^\ast d_c^{s-1}\alpha_{s-1}=d_c^{s-1}\varphi_P^\ast\alpha_{s-1}+d_0\beta_s\text{ for some }\beta_s\in\Omega^{s,1-s}(G_1)\,.
   \end{align*}
   Since this formula holds for each $s\in P$, by the linearity of the differentials and the Pansu pullback, we get that if we impose $\zeta:=\Pi_0\varphi_P^\ast\omega$, we have that $\varphi_P^\ast\omega=\zeta+d_0\beta$ with $\beta=d_0^{-1}(\varphi_P^\ast\omega)$.
   
   The final claim then follows directly from Theorem \ref{thm: algebraic construction}, since for each $x\in G_1$, $\zeta=(D_P\varphi(x))^\ast\omega-d_0\beta_x$ is left-invariant by assumption. Note that $\zeta$ does not depend on the choice of $\alpha$.
\end{proof}

\begin{proposition}[Lifting of graded homomorphisms to central extensions]\label{prop: Xiangdgong}
    Let $G_1$ and $G_2$ be two Carnot groups with respective Lie algebras $\mathfrak{g}_1$ and $\mathfrak{g}_2$, and let $\psi\colon G_1\to G_2$ be a graded homomorphism. Consider a left-invariant 2-form $\omega\in\ker\Box_0\cap\Omega^2_L(G_2)$ such that $\omega=\omega_{p_1}+\cdots+\omega_{p_N}\in\oplus_{i=1}^N\Omega^{p_i,2-p_i}(G_2)$ giving rise to the central extension $\hat{\mathfrak{g}}_2$ of $\mathfrak{g}_2$. Let $\hat{\mathfrak{g}}_1$ be the central extension of $\mathfrak{g}_1$ determined by the closed 2-form $\zeta=\psi^\ast\omega\in\Omega^2_L(G_1)$. Then we can lift $d(\psi)_0\colon\mathfrak{g}_1\to\mathfrak{g}_2$ to a Lie algebra homomorphism $d(\Psi)_0\colon \hat{\mathfrak{g}}_1\to\hat{\mathfrak{g}}_2$ as long as
    \begin{align*}
        \sum_{s=p_1}^{p_N}d_c^{s-1}\psi^\ast\alpha_{s-1}\in\Omega^2_L(G_1)=\bigwedge\nolimits^2\mathfrak{g}_1^\ast\ \text{ where each }\alpha_{s-1}\in E_{s-1,1}^{1,0}(G_2)\text{ satisfies }d_c^{s-1}\alpha_{s-1}=\omega_s\,.
    \end{align*}
    In this case, one can recover the formula for $d(\Psi)_0\colon\hat{\mathfrak{g}}_1\to\hat{\mathfrak{g}}_2$ from Theorem \ref{thm: algebraic construction}.
\end{proposition}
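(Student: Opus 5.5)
The plan is to reduce the statement to Proposition~\ref{prop: extensions} by treating the graded homomorphism $\psi$ as a particular smooth Pansu differentiable map. A graded homomorphism is smooth and contact, and it commutes with dilations. Its Pansu derivative at every point is therefore $\psi$ itself, viewed through the exponential identification as the graded Lie algebra homomorphism $d(\psi)_0\colon\mathfrak g_1\to\mathfrak g_2$. Consequently $D_P\psi(x)=d(\psi)_0$ for all $x\in G_1$. By Proposition~\ref{prop:triangularity-classical-differential}, the classical differential in left-translated coordinates is block diagonal, so the classical pullback coincides with the Pansu pullback: $\psi^\ast=\psi_P^\ast$. I would verify this first, using the BCH formula or formula \eqref{formula esplicita differenziale classico}. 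For a group homomorphism we have $\widetilde\psi=\psi$, hence $d(\widetilde\psi)_0=d(\psi)_0$, which preserves the layers.

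Next I would apply Proposition~\ref{prop: extensions} with $\varphi=\psi$. The hypothesis that $\sum_s d_c^{s-1}\psi^\ast\alpha_{s-1}$ is left-invariant is exactly the assumption there that the form it calls $\zeta$ (write it $\zeta'$ here) is left-invariant. By that proposition and Corollary~\ref{cor: pullback commuta con diff rumin 1-forms}, $\zeta'=\Pi_0\psi_P^\ast\omega$ and
\[
\psi_P^\ast\omega=\zeta'+d_0\beta,\qquad \beta=d_0^{-1}(\psi_P^\ast\omega).
\]
Since $\omega$ is left-invariant and $D_P\psi$ is constant, $\psi^\ast\omega=\psi_P^\ast\omega$ is left-invariant. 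Also, $\Pi_0$ and $d_0^{-1}$ are fibrewise constant operators, so $\beta$ is left-invariant, i.e.\ $\beta\in\mathfrak g_1^\ast$. The identity then reads $\zeta=\psi^\ast\omega=\zeta'+d\beta$ in $\bigwedge^2\mathfrak g_1^\ast$, so $\zeta$ and $\zeta'$ are cohomologous. By Definition~\ref{def: central ext} they define equivalent central extensions $\hat{\mathfrak g}_1$, with an explicit isomorphism $\operatorname{Id}+\mu$.

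Finally I would invoke Theorem~\ref{thm: algebraic construction} directly, with the homomorphism $d(\psi)_0$ and the cocycles $\zeta=\psi^\ast\omega$ on $\mathfrak g_1$ and $\omega$ on $\mathfrak g_2$. Here $[\zeta]=(d(\psi)_0)^\ast[\omega]$ holds trivially with $\eta=0$. This yields the lift $d(\Psi)_0(u,X)=(u,d(\psi)_0X)$. If one instead works with the extension defined by the harmonic representative $\zeta'$, the lift is $(u,X)\mapsto(u-\beta(X),d(\psi)_0X)$, as in Proposition~\ref{prop: extensions}.

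The main obstacle is not the algebra but the identification $D_P\psi\equiv d(\psi)_0$ together with $\psi^\ast=\psi_P^\ast$. That identification is what guarantees that the displayed sum is automatically related to $\Pi_0\psi^\ast\omega$, and that the correction $\beta$ is left-invariant rather than just smooth. I would handle it by showing that $\widetilde\psi=L_{-\psi(x)}\circ\psi\circ L_x=\psi$ commutes with $\delta_\lambda$, so that the limit defining the Pansu derivative is constant in $\lambda$.
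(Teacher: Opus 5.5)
Your argument is correct, but it is organised differently from the paper's proof.

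\emph{Dependence on the main theorem.} The paper notes that for a graded homomorphism the Pansu pullback is the classical pullback. It then re-runs the reasoning of Proposition~\ref{prop: extensions} with the classical identity $d\psi^*=\psi^*d$ in place of Theorem~\ref{thm: commutativity}, precisely to show that the main theorem is not needed in this case. You instead feed $\psi$ into Proposition~\ref{prop: extensions} as a black box. This is legitimate, since $\psi$ is a smooth contact map with $D_P\psi\equiv d(\psi)_0$, but it makes your proof depend on Theorem~\ref{thm: commutativity}.

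\emph{The final step.} The more substantive difference is your last step. Since $\hat{\mathfrak g}_1$ is by definition the extension given by $\zeta=\psi^*\omega$, Theorem~\ref{thm: algebraic construction} applies with $\eta=0$. It yields the lift $(u,X)\mapsto(u,d(\psi)_0X)$ with no further input. So neither the spectral machinery nor the displayed condition on $\sum_s d_c^{s-1}\psi^*\alpha_{s-1}$ is needed for the lift to exist. What that machinery does is identify the harmonic representative $\Pi_0\psi^*\omega=\sum_s d_c^{s-1}\psi^*\alpha_{s-1}$. Through your left-invariant $\beta=d_0^{-1}(\psi^*\omega)$, it also shows that this representative defines an equivalent extension.

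\emph{The hypothesis is automatic.} Your own observations show the condition holds without assuming it. Corollary~\ref{cor: pullback commuta con diff rumin 1-forms} holds for every $\alpha\in Z_r^{1,0}(G_2)$ with no invariance hypothesis. Moreover, $\Pi_0\psi^*\omega$ is left-invariant because $\psi^*\omega$ is and $\Pi_0$ is fibrewise constant. So you need not invoke the hypothesis in your second step.

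\emph{Comparison.} The paper's route keeps the statement inside the spectral framework. Yours shows that, as stated, the conclusion is essentially tautological, and it gives the lift explicitly.
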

\begin{proof}
    Note that the proof of this statement need not rely on Theorem \ref{thm: commutativity}. Indeed, in the case where $\psi\colon G_1\to G_2$ is a graded homomorphism, the Pansu and the classical Frech\'et derivative coincide, and so the Pansu pullback is indeed a standard pullback. It is therefore sufficient to mimic the same reasoning as in Proposition \ref{prop: extensions} to first show that $d_c^{s-1}=\Delta_{s-1}$. Since the differentials of the spectral sequence are defined using only the exterior derivative (see Proposition \ref{prop: Delta con diff}), one can simply use the classical result that $d\psi^\ast=\psi^\ast d$.
\end{proof}
\begin{remark}
    Even though Proposition \ref{prop: extensions} holds in full generality, the question of whether a particular choice of left-invariant closed two form $\omega\in\bigwedge^\bullet\mathfrak{g}^\ast$ gives rise to a non trivial lift $\Phi_x\colon\hat{\mathfrak{g}}_1\to\hat{\mathfrak{g}}_2$ is a case-by-case study. Indeed, we know that the Pansu pullback necessarily keeps the weight of the 2-form constant, which means that $w(\varphi_P^\ast\omega)=w(\omega)$. Therefore, if we are picking a form such that $w(\omega)$ is greater than any of the weights of the 2-forms in $\ker\Box_0\cap\Omega^{2}(G_1)$ then necessarily $\Pi_0\varphi_P^\ast\omega$ will vanish and hence $\hat{\mathfrak{g}}_1$ will be the trivial central extension.

    This also goes hand in hand with other problems linked to central extensions. Indeed, it may well happen that two Lie algebras $\hat{\mathfrak{g}}_1$ and $\hat{\mathfrak{g}}_2$ that are central extensions of $\mathfrak{g}$ corresponding to non-cohomologous closed two forms $\omega_1,\omega_2\in\bigwedge^2\mathfrak{g}^\ast$ are nevertheless isomorphic.   
\end{remark}
\begin{corollary}[Lifting the Pansu derivative to stratified
central extensions]
\label{cor: extensions to stratifiable}
Let $G_1$ and $G_2$ be two Carnot groups, and let
$\varphi:G_1\to G_2$ be a Euclidean smooth Pansu
differentiable map.
Given a non-zero $\omega\in
    E_0^2(G_2)\cap\Omega_L^{p,2-p}(G_2)$ and an $\alpha\in Z_{p-1}^{1,0}(G_2)$ such that $d_c^{p-1}\alpha=\omega$, assume that
    $$E_0^2(G_1)\cap\Omega^{q,2-q}(G_1)=\{0\}
    \ \text{ for every }q>p\ \text{ and }\ d_c^{p-1}(\varphi_P^*\alpha)
    \in\bigwedge\nolimits^2\mathfrak{g}_1^\ast\setminus\{0\}\,.$$
Denote by $\hat G_1$ and $\hat G_2$ the Carnot groups
associated with the central extensions defined by
$d_c^{p-1}\varphi^\ast_P\alpha$ and $\omega$, respectively, and by
$\pi_h:\hat G_h\to G_h$ their projections for $h=1,2$.

Then there exists a smooth contact lift
$\Psi:\hat G_1\to\hat G_2$ satisfying $\pi_2\circ\Psi=\varphi\circ\pi_1$.
In particular, $\Psi$ is Pansu differentiable with $D_P\Psi(\hat x)=\Phi_{\pi_1(\hat x)}$ for every $\hat x\in\hat G_1$,
where $\Phi_x$ is the graded algebraic lift described below.
\end{corollary}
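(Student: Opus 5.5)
Set $\zeta:=d_c^{p-1}(\varphi_P^*\alpha)$, which is left-invariant and nonzero by hypothesis. The plan has three stages: build the pointwise graded lifts $\Phi_x$, show that $\hat{\mathfrak g}_1$ and $\hat{\mathfrak g}_2$ are stratified, and then construct $\Psi$ explicitly in exponential-type coordinates and check that it is contact.

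\textbf{Step 1: the graded lifts.} I would first apply Proposition~\ref{prop: extensions} with $P=\{p\}$. By Corollary~\ref{cor: pullback commuta con diff rumin 1-forms},
\[
\varphi_P^*\omega=\zeta+d_0\beta, \qquad \beta:=d_0^{-1}(\varphi_P^*\omega)\in\Omega^{p,1-p}(G_1).
\]
Theorem~\ref{thm: algebraic construction} then gives, for each $x$, the lift $\Phi_x(u,X)=(u-\beta_x(X),D_P\varphi(x)X)$.

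Since $\beta$ has pure weight $p$, $\beta_x$ vanishes off $V^1_{p-1}$. The extensions are made graded by putting the centre $T$ in weight $p$. This works because $\omega$ and $\zeta$ both have weight $p$, so every bracket landing in the centre raises the weight correctly. With this grading, $\Phi_x$ is strata-preserving.

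\textbf{Step 2: stratification of the extensions.} Both extensions are generated by their first layer. For $\hat{\mathfrak g}_2$, nontriviality of $\omega$ means $T$ is obtained as a bracket $[V_1,V_{p-1}]$: indeed $\omega$ is $d_0$-closed and not exact, so it cannot vanish on all of $[\mathfrak g,\mathfrak g]$-complements. The same argument applies to $\hat{\mathfrak g}_1$, using $\zeta\neq 0$ in $E_0^2$. This gives the stratifications, and so $\hat G_1,\hat G_2$ are Carnot groups.

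\textbf{Step 3: the lift $\Psi$.} I would write $\hat G_h=G_h\times\mathbb R$, with group law twisted by the cocycle in exponential coordinates. I then look for $\Psi(x,t)=(\varphi(x),t+F(x))$, where $F:G_1\to\mathbb R$ is smooth.

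The contact condition should reduce to a first-order equation $dF=\text{(explicit 1-form on }G_1)$. Concretely, the pullback of the contact form $\vartheta_2=dt-\tilde\omega_2$ must equal the contact form $\vartheta_1=dt-\tilde\omega_1$ modulo horizontal directions. Here $\tilde\omega_h$ are the non-left-invariant primitives coming from exponential coordinates, with $d\tilde\omega_1=\zeta$ and $d\tilde\omega_2=\omega$.

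The candidate 1-form whose differential must vanish is $\varphi^*\tilde\omega_2-\tilde\omega_1$ corrected by horizontal terms. Its closedness on the nose uses
\[
d(\varphi^*\tilde\omega_2)=\varphi^*\omega .
\]
Decomposing by weight, the component of $\varphi^*\omega$ of weight $p$ equals $\zeta+d_0\beta$ by Step 1.

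The components of weight greater than $p$ form the main obstacle. I would handle them with the hypothesis $E_0^2(G_1)\cap\Omega^{q,2-q}(G_1)=\{0\}$ for $q>p$. With it, the spectral argument of Theorem~\ref{thm: commutativity}, applied to the full classical pullback, should show that these components are $d$-exact in a way compatible with the weight filtration. Concretely, I would run Lemma~\ref{lem:weight-components-classical-pullback} on $\varphi^*\alpha$: the relations $\sum_{a+b=n} d_a\Phi_b=\sum_{a+b=n}\Phi_a d_b$ provide correction 1-forms of weights $>1$, and these can be absorbed into $F$.

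Since $G_1$ is simply connected, a closed 1-form is exact, so $F$ exists. Finally, contact smooth maps are Pansu differentiable by \cite[Theorem 1.1]{Magnani_2013}. On the horizontal layer and the centre, the Pansu derivative agrees with $D_P\varphi$ and the identity respectively. Since $\Phi_x$ has these same restrictions and a graded homomorphism is determined by its first layer, this forces $D_P\Psi=\Phi_{\pi_1(\cdot)}$.
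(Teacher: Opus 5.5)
Your route differs from the paper's. The paper does not construct $\Psi$: it obtains it from \cite[Theorem 1.4]{hakavuori2025smoothcontactliftscentral}, which is where the hypothesis on $E_0^2(G_1)$ and simple connectedness enter. It then identifies $D_P\Psi(\hat x)$ with $\Phi_x$ by the same first-layer argument as your last paragraph. Building $\Psi(x,t)=(\varphi(x),t+F(x))$ by hand is viable, but Step~3 has a gap exactly where the paper cites that theorem.

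Since $\varphi$ is contact, $\Psi$ is contact iff $\Psi^*\vartheta_2-\vartheta_1=dF-\varphi^*\tilde\omega_2+\tilde\omega_1$ vanishes on $HG_1$. Equivalently, there must be a $1$-form $\gamma\in\Omega^{\geq 2,1}(G_1)$ with
\[
d\gamma=\varphi^*\omega-\zeta\,.
\]
Note that $\gamma$ annihilates horizontal vectors; it is not made of ``horizontal terms''. Given such $\gamma$, the form $\varphi^*\tilde\omega_2-\tilde\omega_1-\gamma$ is closed and $F$ is a primitive. So the correction forms are the slack $\gamma$ permitted by the contact condition, not something ``absorbed into $F$''. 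Your claim that the spectral argument ``should show'' the components of weight $>p$ are exact compatibly with the filtration is the whole content of this step. The tools you name do not deliver it: Theorem~\ref{thm: commutativity} only compares weight-$p$ components modulo $B_1=\operatorname{Im}d_0$. The relations of Lemma~\ref{lem:weight-components-classical-pullback} are merely $d\varphi^*=\varphi^*d$ sorted by weight and produce no primitives on their own.

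The missing argument is short and uses only the first page.
\begin{itemize}
\item Put $R:=\varphi^*\omega-\zeta-d\beta$. Since $\varphi_P^*\omega-\zeta=d_0\beta$, Proposition~\ref{prop:weight-behaviour-pullbacks} gives $R\in\Omega^{\geq p+1,2}(G_1)$.
\item $dR=0$, because $\omega$ and $\zeta$ are left-invariant and $d_0$-closed, hence $d$-closed.
\item Let $R_q$ be the lowest-weight component of $R$. Then $d_0R_q=0$ and $\Pi_0R_q\in E_0^2(G_1)\cap\Omega^{q,2-q}(G_1)=\{0\}$. Hence $R_q=d_0b_q$ with $b_q:=d_0^{-1}R_q\in\Omega^{q,1-q}(G_1)$ smooth.
\item $R-db_q$ is again closed, with weights $\geq q+1$, so induction on the weight yields $R=d\sum_q b_q$.
\end{itemize}
Thus $\gamma:=\beta+\sum_q b_q$ has all weights $\geq p\geq 2$ and satisfies $d\gamma=\varphi^*\omega-\zeta$. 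With this inserted, your argument becomes a self-contained substitute for the citation.

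Two smaller slips:
\begin{itemize}
\item $\beta\in\Omega^{p,1-p}(G_1)$, so $\beta_x$ vanishes off $V^1_p$, not $V^1_{p-1}$. This is what makes $\Phi_x$ graded with $T$ in weight $p$.
\item The reason in Step~2 should be the following. A $d_0$-closed weight-$p$ $2$-form vanishing on $V_1\wedge V_{p-1}$ vanishes identically, by the cocycle identity and $V_i=[V_1,V_{i-1}]$. So if the brackets of $V_1$ with $V_{p-1}$ in the extension did not span $V_p\oplus\mathbb R T$, then $\omega$ would be $d_0$-exact, contradicting $0\neq\omega\in E_0^2$.
\end{itemize}
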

\begin{proof}
By Corollary \ref{cor: pullback commuta con diff rumin 1-forms}, $d_c^{p-1}\varphi_P^\ast\alpha=\Pi_0(\varphi_P^*\omega)$, hence both $d_c^{p-1}\varphi_P^\ast\alpha$ and $\omega$ define stratified central extensions
with central generators of weight $p$. In particular, we have that $\varphi_P^\ast\omega=d_c^{p-1}\varphi_P^\ast\alpha+d_0\beta$ with $\beta=d_0^{-1}\varphi_P^\ast\omega\in\Omega^{p,1-p}(G_1)$.

The weight assumption on $E_0^2(G_1)$ and the simple connectedness
of $G_1$ allow us to apply
\cite[Theorem 1.4]{hakavuori2025smoothcontactliftscentral},
with the identity map between the central kernels.
This yields a smooth contact lift $\Psi$ of $\varphi$.

For each $x\in G_1$, the map
\[
    \Phi_x(u,X)
    =
    \bigl(u-\beta_x(X),D_P\varphi(x)X\bigr)
\]
is a Lie algebra homomorphism by
Theorem~\ref{thm: algebraic construction} that is graded because $\beta$ has weight $p$.

For $\hat x\in\hat G_1$ and $x=\pi_1(\hat x)$,
both $D_P\Psi(\hat x)$ and $\Phi_x$ are graded
homomorphisms covering $D_P\varphi(x)$, and since $p\geq2$,
the projections identify the horizontal layers
of the extensions with those of the base groups.
Thus the two homomorphisms agree on the horizontal
layer. As the horizontal layer bracket-generates $\hat{\mathfrak g}_1$,
we have $D_P\Psi(\hat x)=\Phi_x$.
\end{proof}

\begin{remark}
The assumption that $E_0^2(G_1)$ contains no forms of weight
greater than $p$ can be replaced by the conditions
$$d_c^j(\varphi_P^*\alpha)=0
    \ \text{ for every }j\geq p\,,$$
provided that $\alpha\in Z_{p-1}^{1,0}(G_2)$ with $d_c^{p-1}\alpha=\omega$. Indeed, as long as the other hypotheses of the corollary are satisfied, \cite[Theorem 1.1]{hakavuori2025smoothcontactliftscentral}
yields the same contact-lifting conclusion.
\end{remark}

In contrast to the more standard case described in Corollary \ref{cor: extensions to stratifiable}, Proposition \ref{prop: extensions} holds in full generality when $\omega$ is \textit{not} homogeneous. In this case, assuming we are not dealing with some trivial extensions, if we start from the Pansu derivative $D_P\varphi(x)\colon\mathfrak{g}_1\to\mathfrak{g}_2$ of a Pansu differentiable map $\varphi\colon G_1\to G_2$ between two Carnot groups, we obtain a lift which is a homomorphism between two non-stratifiable Lie algebras $\hat{\mathfrak{g}}_1$ and $\hat{\mathfrak{g}}_2$. 

An explicit example can be easily found when considering a Pansu differentiable map $\varphi\colon \mathbb H^1\times\R\to \mathbb H^1\times\R$ where we are lifting the Lie algebra homomorphism $D_P\varphi(x)\colon\mathfrak{h}^1\times\R\to\mathfrak{h}^1\times\R$ using the non-homogeneous left-invariant 2-form $\omega=\theta_2\wedge\theta_3+\theta_1\wedge\tau$.
In this case, both central extensions $\hat{\mathfrak{g}}$ are isomorphic to the non stratifiable Lie algebra generated by $\{X_1,X_2,X_3,T,W\}$ with non trivial brackets $[X_1,X_2]=T$, $[X_1,T]=[X_2,X_3]=W$. This opens up the question as to whether the lifted Lie algebra homomorphism can be rephrased as an appropriate Pansu derivative $D_P\Psi(0,x)\colon\hat{\mathfrak{g}}\to\hat{\mathfrak{g}}$ of a map $\Psi\colon G\to G$ between the two non stratifiable groups.

Finally, the construction above allows the Pansu pullback itself
to be non-left-invariant, provided that its harmonic
projection is left-invariant. A further question is how
to extend the construction when this harmonic projection
varies with the base point, so that a fixed source
extension is no longer supplied by the preceding
criterion. Connections on principal bundles provide a
possible geometric setting for such questions, although
interpreting the relevant forms as curvature requires
de Rham closedness, which does not follow from
$d_0$-closedness alone.

\bigskip
\bibliographystyle{amsplain}
\bibliography{bibliography}

\end{document}